\newcommand*{\fregiu}{\downarrow \!}
\newcommand*{\fresu}{\uparrow \!}
\newtheorem{theorem}{Theorem}[section]
\newtheorem{lemma}[theorem]{Lemma}
\newtheorem{proposition}[theorem]{Proposition}
\newtheorem*{observation*}{Observation}
\theoremstyle{definition}
\newtheorem{definition}[theorem]{Definition}
\newtheorem{example}[theorem]{Example}
\newtheorem{remark}[theorem]{Remark}
\newtheorem*{remark*}{Remark}
\newcounter{claimcounter}
\newenvironment{claim}{\stepcounter{claimcounter}{\par\noindent\underline{Claim \theclaimcounter:}}}{}
\newenvironment{claimproof}[1]{\par\noindent\underline{Proof of Claim \theclaimcounter:}\space#1}{\hfill $\blacksquare$ \\}
\newcommand{\subjclass}[2][2010]{%
  \let\@oldtitle\@title%
  \gdef\@title{\@oldtitle\footnotetext{#1 \emph{Mathematics subject classification.} #2.}}%
}
\newcommand{\keywords}[1]{%
  \let\@@oldtitle\@title%
  \gdef\@title{\@@oldtitle\footnotetext{\emph{Key words and phrases.} #1.}}%
}
\title{Existentially Closed Brouwerian Semilattices}
\author{Luca Carai         \and
        Silvio Ghilardi  
        }
\institute{
	Universit\`{a} degli Studi di Milano,
	Milano, Italy\\
	\email{luca.carai@studenti.unimi.it}\\
	\email{silvio.ghilardi@unimi.it}
}
\authorrunning{Carai and Ghilardi}
\titlerunning{Existentially Closed Brouwerian Semilattices}
\subjclass{Primary 03G25; Secondary 03C10, 06D20}
\keywords{Brouwerian semilattices, existentially closed structures, finite
duality}
\begin{document}

\definecolor{ffffff}{rgb}{1.,1.,1.}

\maketitle

\date{\today}
\begin{center}
\makeatletter
\@date
\makeatother
\end{center}

\begin{abstract}
 The variety of Brouwerian semilattices is amalgamable and locally finite, hence by well-known results~\cite{whe76}, it has a  model completion (whose models are the existentially closed structures). In this paper, we supply for such a model completion  a finite and rather simple axiomatization. 
\end{abstract}

\section{Introduction} \label{section:introduction}

In algebraic logic some attention has been paid to the class of existentially closed structures in varieties coming from the algebraization of common propositional logics. In fact, there are relevant cases where such classes are elementary: this includes, besides the easy case of Boolean algebras, also Heyting algebras~\cite{GZ_APAL,gz02}, diagonalizable algebras~\cite{Shavrukov,gz02} and some universal classes related to temporal logics~\cite{GvG},\cite{GvGlics}. However, very little is known about the related axiomatizations, with the remarkable exception of the 
case of the locally  finite amalgamable varieties of Heyting algebras recently investigated in~\cite{dj10} and of the simpler cases of posets and semilattices studied in~\cite{ab86}. In this paper, we use a 
methodology similar to~\cite{dj10} (relying on classifications of minimal extensions) in order to investigate the case of Brouwerian semilattices, i.e. the algebraic structures corresponding to the implication-conjunction fragment of intuitionistic logic. We 
obtain the finite axiomatization reported below, which is similar in spirit to the axiomatizations from~\cite{dj10} (in the sense that we also have kinds of `density' and 
`splitting' conditions). The main technical problem  we must face for this result (making axioms formulation slightly more complex and proofs much more involved) 
is the lack of joins in the language of Brouwerian semilattices. 

\vspace{-6.5pt} 

\subsection{Statement of the main result}
The first researcher to consider the Brouwerian semilattices as algebraic objects in their own right was W. C. Nemitz in \cite{nem65}.
A \emph{Brouwerian semilattice} is a poset $(P, \leq)$ having a greatest element (which we denote with $1$),
inf's of pairs (the inf of $\lbrace a,b \rbrace$ is called `meet' of $a$ and $b$ and denoted with $a \wedge b$)
and relative pseudo-complements (the relative pseudo-complement of $a$ and $b$ is denoted with $a \rightarrow b$).
We recall that $a\rightarrow b$ is characterized by the the following property: for every $c \in P$ we have
\begin{equation*}
c \leq a \rightarrow b \quad \text{iff} \quad c \wedge a \leq b.
\end{equation*}

Brouwerian semilattices can also be defined in an alternative way as algebras over the signature $1, \wedge, \rightarrow$, subject to the following equations
\begin{equation*}
\begin{array}{r c l}
a \wedge a=a & \qquad & a \wedge (a \rightarrow b)= a \wedge b \\
a \wedge b = b \wedge a & & b \wedge (a \rightarrow b) = b \\
a \wedge (b \wedge c) = (a \wedge b) \wedge c & & a \rightarrow (b \wedge c) = (a \rightarrow b) \wedge (a \rightarrow c) \\
a \wedge 1= a & & a \rightarrow a=1
\end{array}
\end{equation*}
In case this equational axiomatization is adopted, the partial order $\leq$ is recovered via the definition
$ a \leq b$ iff $a \wedge b= a$.

By a result due to Diego and McKay~\cite{die66,mck68}, Brouwerian semilattices are locally finite (meaning that all finitely generated Brouwerian semilattices are finite); since they are also amalgamable, it follows~\cite{whe76} that the theory of Brouwerian semilattices has a model completion.
We prove that such a model completion 
is given by the above set of axioms for the theory of Brouwerian semilattices together with the three additional axioms (Density1, Density2, Splitting) below.\\

\noindent We use the shorthand $a \ll b$ to mean that $a \leq b$ and $b \rightarrow a=a$.\\

\noindent \textbf{[Density 1]} For every $c$ there exists an element $b$ different from $1$ such that $b \ll c$. \\

\noindent \textbf{[Density 2]} For every $c,a_1,a_2,d$ such that $a_1,a_2 \neq 1$, $a_1 \ll c$, $a_2 \ll c$ and $d \rightarrow a_1=a_1$, $d \rightarrow a_2=a_2$ there exists an element $b$ different from $1$ such that:
\begin{equation*}
\begin{split}
	a_1 \ll b \\
	a_2 \ll b \\
	b \ll c \\
    d \rightarrow b=b
\end{split}
\end{equation*}

\noindent\textbf{[Splitting]} For every $a,b_1,b_2$ such that $1 \neq a \ll b_1 \land b_2$ there exist elements $a_1$ and $a_2$ different from $1$ such that:
\begin{equation*}
\begin{split}
	 & b_1 \geq a_1, \: b_2 \geq a_2 \\
     & a_2 \rightarrow a = a_1  \\
     & a_1 \rightarrow a = a_2 \\
     & a_2 \rightarrow b_1 = b_2 \rightarrow b_1 \\
     & a_1 \rightarrow b_2 = b_1 \rightarrow b_2 
\end{split}
\end{equation*}


As testimony of the usefulness of this result, the following proposition shows some properties of the existentially closed Brouwerian semilattices that can be deduced from our investigation as an easy exercise.

\begin{proposition}
Let $L$ be an existentially closed Brouwerian semilattice. Then:
\begin{enumerate}
\item $L$ has no bottom element.
\item If $a,b \in L$ are incomparable, i.e. $a \nleq b$ and $b \nleq a$, then the join of $a$ and $b$ in $L$ does not exist.
\item There are no meet-irreducible elements in $L$.
\end{enumerate}
\end{proposition}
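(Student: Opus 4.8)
The plan is to read off items (1) and (3) directly from the new axioms, and to obtain (2) from existential closedness together with the description of extensions used in the main proof.

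For (1), I would suppose toward a contradiction that $L$ has a least element $0$ and apply \textbf{[Density 1]} with $c := 0$. This yields $b \neq 1$ with $b \ll 0$, hence $b \le 0$, hence $b = 0$ (as $0$ is the bottom); but then $0 \to b = 0 \to 0 = 1$, so $b = 1$, a contradiction. For (3), given $a \in L$ with $a \neq 1$ (the top being excluded from meet-irreducibility by convention), I would apply \textbf{[Splitting]} to $a$ with $b_1 = b_2 := 1$: since $a \ll 1$ always (because $1 \to a = a$) the hypothesis is just $a \neq 1$, the first conclusion and the last two are trivially satisfied, and one is left with $a_1, a_2 \neq 1$ such that $a_2 \to a = a_1$ and $a_1 \to a = a_2$. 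Using $y \le x \to y$ one gets $a \le a_1$ and $a \le a_2$, whence $a_1 \wedge a_2 = a_1 \wedge (a_1 \to a) = a_1 \wedge a = a$; moreover $a_1 = a$ would force $a_2 = a \to a = 1$, so actually $a < a_1$ and $a < a_2$. Thus $a$ is a meet of two strictly larger elements, hence not meet-irreducible.

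For (2), suppose $a, b \in L$ are incomparable (so $a, b \neq 1$) and that $e := a \vee b$ exists in $L$. The idea is that, since $L$ is existentially closed, the quantifier-free formula $a \le x \wedge b \le x \wedge x \le e \wedge x \neq e$ (with parameters $a, b, e$) is satisfied by some element of $L$ as soon as it is satisfied by some element of a Brouwerian-semilattice extension of $L$; but a solution $x_0 \in L$ would be an upper bound of $\{a, b\}$ lying strictly below $e$, contradicting $e = a \vee_L b$. So it is enough to produce one extension of $L$ containing such an element. I would pass to the finite subalgebra $L_0 := \langle a, b, e \rangle$ (finite by local finiteness, and still having $e = a \vee_{L_0} b$ because $L_0 \subseteq L$), construct the desired extension of $L_0$ — intuitively by ``splitting'' $e$, i.e.\ inserting a new upper bound of $a$ and $b$ below it — as an instance of the minimal extensions classified in the proof of the main theorem, and then amalgamate it with $L$ over $L_0$. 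In the special case $e = 1$ there is a shortcut: \textbf{[Density 2]} with $c = d := 1$ and $a_1 := a$, $a_2 := b$ (all its hypotheses being automatic) yields $u \neq 1$ with $a \le u$ and $b \le u$, so $1 = a \vee b \le u$ and hence $u = 1$, a contradiction.

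The algebraic identities in (1) and (3), and the amalgamation step in (2), are routine. The part I expect to be genuinely delicate is singling out, among the minimal extensions, the one that adjoins an upper bound of two incomparable elements below their join, and checking that it is again a Brouwerian semilattice: the new element must dominate a down-set that need not be principal, so one point may not suffice and one has to control the resulting meets with care — exactly the kind of bookkeeping forced by the absence of joins in the signature. That is where the real work lies.
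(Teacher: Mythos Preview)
Your arguments for (1) and (3) are correct and match the paper's (which works throughout in the dual co-Brouwerian setting): Density~1 forbids a bottom element, and Splitting applied with $b_1 = b_2 = 1$ exhibits any $a \neq 1$ as a meet of two strictly larger elements.

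For (2) your outline is also valid, but the paper's route is shorter and sidesteps exactly the difficulty you anticipate at the end. Instead of constructing an extension of $L_0$ by hand and then amalgamating with $L$, the paper invokes its intermediate characterisation of existentially closed structures directly: in such an $L$, every first-kind signature in a finite substructure is already induced by a primitive element of $L$. The key move is to work with meet-irreducible \emph{components} rather than with $a,b$ themselves: from $b \nleq a$ one picks a meet-irreducible $g_1 \geq a$ of $L_0$ with $b \nleq g_1$, and symmetrically $g_2 \geq b$ with $a \nleq g_2$; these are automatically incomparable, and the first-kind signature $(1,\{g_1,g_2\})$ is then induced by some primitive $x \in L \setminus L_0$. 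This $x$ lies above $g_1 \geq a$ and $g_2 \geq b$, hence is an upper bound of $\{a,b\}$; and $e \nleq x$ (otherwise $e \leq g_1$ or $e \leq g_2$, both impossible since $b \nleq g_1$ and $a \nleq g_2$), so $e \wedge x$ is an upper bound of $\{a,b\}$ strictly below $e$. Thus a \emph{single} new meet-irreducible, placed above two well-chosen components rather than above $a$ and $b$ directly, already does the job --- the bookkeeping you worry about (``one point may not suffice'') never materialises. Your Density~2 shortcut for the case $e = 1$ is correct but unnecessary, since the general argument covers it uniformly.
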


The paper is structured as follows: Section \ref{section:preliminary} gives the basic notions and definitions, in particular it describes the finite duality and characterizes the existentially closed structures by means of embeddings of finite extensions of finite sub-structures. In Section \ref{section:minimalfiniteextensions} we investigate the minimal extensions and use them to give an intermediate characterization of the existentially closed structures. Section \ref{section:axioms} focuses on the axiomatization, it is split into two subsections: the first about the Splitting axiom and the second about the Density axioms. Finally, in Section \ref{section:properties} we present and prove some properties of the existentially closed structures whose validity follows from this investigation.

\section{Preliminary Background} \label{section:preliminary}

A \textit{co-Brouwerian semilattice}, CBS for short, is a structure obtained by reversing the order of a Brouwerian semilattice.\\
We will work with CBSes instead of Brouwerian semilattices.

\begin{definition}
A poset $(P, \leq)$ is said to be a \textit{co-Brouwerian semilattice} if it has a least element, which we denote with $0$, and for every $a,b \in P$ there exists the sup of $\lbrace a,b \rbrace$, which we call join of $a$ and $b$ and denote with $a \vee b$, and the difference $a - b$ satisfying for every $c \in P$
\[
a-b \leq c \quad \text{ iff } \quad a \leq b \vee c.
\]
$a \ll b$ will mean that $a \leq b$ and $b-a=b$.
\end{definition} 

Clearly, there is also an alternative equational definition for co-Brouwerian semilattices (which we leave to the reader, because it is dual to the equational definition for Brouwerian semilattices given above).\\

Moreover, we will call co-Heyting algebras the structures obtained reversing the order of Heyting algebras. Obviously any co-Heyting algebra is a CBS.

\begin{definition}
Let $A,B$ be co-Brouwerian semilattices.
A map $f:A \to B$ is a \textit{morphism of co-Brouwerian semilattices} if it preserves $0$, the join and difference of any two elements of $A$.
\end{definition}

Notice that such a morphism  $f$ is an order preserving map because, for any $a,b$ elements of a co-Brouwerian semilattice, we have $a \leq b$ iff $a \vee b= b$.

\begin{definition}
Let $L$ be a CBS.\\
We say that $g \in L$ is \textit{join-irreducible} iff for every $n \geq 0$ and $b_1, \ldots, b_n \in L$, we have that
\[
g \leq b_1 \vee \ldots \vee b_n  \quad \text{implies} \quad g \leq b_i \text{ for some } i=1, \ldots, n.
\]
Notice that taking $n=0$ we obtain that join-irreducibles are different from $0$.
\end{definition}

\begin{remark}
Let $L$ be a CBS and $g \in L$. Then the following conditions are equivalent:
\begin{enumerate}
\item $g$ is join-irreducible. \label{itm:propjoinirr1}
\item $g \neq 0$ and for any $b_1,b_2 \in L$ we have that $g \leq b_1 \vee b_2 \;$ implies $\; g \leq b_1$ or $g \leq b_2$. \label{itm:propjoinirr2}
\item For every $n \geq 0$ and $b_1, \ldots, b_n \in L$ we have that\\
 $g = b_1 \vee \ldots \vee b_n \;$ implies $\; g = b_i$ for some $i=1, \ldots, n$. \label{itm:propjoinirr3}
\item $g \neq 0$ and for any $b_1,b_2 \in L$ we have that $g = b_1 \vee b_2 \;$ implies $\; g = b_1$ or $g = b_2$. \label{itm:propjoinirr4}
\item $g \neq 0$ and for any $a \in L$ we have that $g-a=0 \;$ or $\; g-a=g$. \label{itm:propjoinirr5}
\end{enumerate}
\end{remark}

\begin{proof}
The implications \ref{itm:propjoinirr1} $\Leftrightarrow$ \ref{itm:propjoinirr2}, \ref{itm:propjoinirr3} $\Leftrightarrow$ \ref{itm:propjoinirr4} and \ref{itm:propjoinirr1} $\Rightarrow$ \ref{itm:propjoinirr3} are straightforward. For the remaining ones see Lemma 2.1 in~\cite{koh81}.
\end{proof}

\begin{definition}
Let $L$ be a CBS and $a \in L$.\\
A \textit{join-irreducible component} of $a$ is a maximal element among the join-irreducibles of $L$ that are smaller than or equal to $a$.
\end{definition}

\begin{remark} \label{listsimplefacts}
The following is a list of facts that might be used without explicit mention.\\
These identities hold in any CBS:
\begin{equation*}
\begin{split}
0-a=0 \qquad & a-0=a \\
(a-b) \vee b=a \vee b \qquad & (a-b) \vee a= a \\
(a-b) \vee (a-(a-b))=a \qquad & a-(a-(a-b))=a-b \\
(a_1 \vee \cdots \vee a_n)-b= & (a_1-b)\vee \cdots \vee (a_n-b) \\  
a-(b_1 \vee \cdots \vee b_n)= & ((a-b_1)- \cdots )-b_n
\end{split}
\end{equation*}
In particular
\[
(a-b)-c=(a-c)-b
\]
Furthermore in any CBS:
\begin{align*}
a \leq b \qquad & \text{iff} \qquad a-b=0 \\
\text{if } \; b \leq c \; \text{ then } \; b-a & \leq c-a \; \text{ and } \; a-c \leq a-b 
\end{align*}
The following facts are true in any finite CBS:
\begin{equation*}
\begin{split}
& a = \bigvee \lbrace \text{join-irreducible components of } a  \rbrace \\
a-b=  \bigvee \lbrace g \: | \: g & \text{ is a join-irreducible component of } a \text{ such that } g \nleq b  \rbrace 
\end{split}
\end{equation*}
Moreover, in a finite CBS, $g$ is join-irreducible iff it has a unique predecessor, i.e. a maximal element among the elements strictly smaller than $g$, and in that case we denote it by $g^-$ and it is equal to $\bigvee_{a < g} a$.\\
Recall that $a \ll b$ means $a \leq b$ and $b-a=b$. Thus, in any finite CBS, $a \ll b$ if and only if $a \leq b$ and there are no join-irreducible components of $b$ that are less than or equal to $a$. Finally, if $g$ is join-irreducible then $ g^- \ll g$.
\end{remark}

\begin{example}
Let $(P, \leq)$ be a poset. For any $a \in P$ we define $\fregiu a = \lbrace p \in P \: | \: p \leq a \rbrace$ and for any $A \subseteq P$ we define $\fregiu A= \bigcup_{a \in A} \fregiu a$. A subset $D \subseteq P$ such that $D= \fregiu D$ is called a \textit{downset}, i.e. a downward closed subset, of $P$. The downsets $\fregiu a$ and $\fregiu A$ are called the \textit{downsets generated by} $a$ and $A$.\\
Given a poset $P$, the set of downsets of $P$ denoted by $\mathcal{D}(P)$ has naturally a structure of CBS given by the usual inclusion of subsets. Joins coincide with the union of subsets and the zero element with the empty subset. It turns out that the difference of two downsets $A,B \in \mathcal{D}(P)$ is $A-B= \fregiu (A \setminus B)$.\\
Note that if $P$ is finite then also $\mathcal{D}(P)$ is. In that case any downset $A \in \mathcal{D}(P)$ is generated by the set of its maximal elements and for any $A,B \in \mathcal{D}(P)$ we have that $A-B$ is the downset generated by the maximal elements of $A$ that are not in $B$. Moreover the join-irreducibles of $\mathcal{D}(P)$ are the downsets of the form $\fregiu p$ for $p \in P$ and the downsets generated by the maximal elements of a given downset are its join-irreducible components. Notice that this is not always the case when $P$ is infinite.\\
Finally, when $P$ is finite, for $A,B \in \mathcal{D}(P)$ satisfying $A \ll B$ means that $A \subseteq B$ and $A$ does not contain any maximal element of $B$. 
\end{example}

\subsection{Locally finiteness}

\begin{theorem} \label{faclocfin}
The variety of CBSes is locally finite.
\end{theorem}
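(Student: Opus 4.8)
The plan is to establish the equivalent statement that for every $n$ the free CBS $F_n$ on $n$ generators $x_1,\dots,x_n$ is finite; local finiteness of the variety then follows at once, since any CBS generated by $n$ elements is a homomorphic image of $F_n$. To bound $|F_n|$ I would proceed by term rewriting, using only the identities collected in Remark~\ref{listsimplefacts}: from $0-a=0$, $a-0=a$, $a-a=0$, $a\vee 0=a$, the distributivity $(a_1\vee\cdots\vee a_k)-b=(a_1-b)\vee\cdots\vee(a_k-b)$, the law $a-(b\vee c)=(a-b)-c$, and the commutativity $(a-b)-c=(a-c)-b$, one first shows that every term is provably equal either to $0$ or to a finite join $\delta_1\vee\cdots\vee\delta_k$ of $\vee$-free terms, and that each $\vee$-free term is obtained from a single variable by subtracting from it a finite set of strictly simpler $\vee$-free terms — unordered and without repetitions, thanks to the commutativity and to $a-a=0$, $a-0=a$. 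So everything comes down to bounding, in terms of $n$ alone, the nesting depth of these iterated differences.

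Bounding that depth is the one genuinely hard point, and it cannot be circumvented cheaply: it is \emph{false} that every element of $F_n$ is a join of ``elementary'' terms $x_i-(x_{j_1}\vee\cdots\vee x_{j_m})$. For instance $x-(y-x)$ is an element of the free CBS on two generators which is not a join of such elementary terms — a couple of evaluations in small CBSes of downsets separate it from all of them — and iterating such constructions keeps producing ever more complicated $\vee$-free terms. What nevertheless forces the nesting to stabilize, so that only finitely many normal forms survive, is the absence of a least element in the signature of (co-)Brouwerian semilattices: a bottom constant is exactly what would allow the Rieger–Nishimura-style infinite ascent familiar from free Heyting algebras. Making this stabilization precise is the technical heart of the matter, and it is the content of the theorem of Diego and McKay on the implication–conjunction fragment of intuitionistic logic; this is the step I expect to be the main obstacle.

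To close the argument I see two routes. The first is semantic: the $\{\wedge,\to\}$-fragment has the finite model property with respect to finite rooted Kripke posets, so two $n$-variable terms determine the same element of $F_n$ exactly when they agree on all finite $n$-colored rooted posets, and a filtration argument bounds the size of the posets that need to be tested, hence the number of inequivalent terms. The second — which I would actually adopt here, for brevity — is to invoke directly the result of Diego and McKay that finitely generated Brouwerian semilattices are finite, and to transfer it via order reversal: passing from a Brouwerian semilattice to its order dual leaves the underlying set and the generated subalgebras unchanged, so it carries finitely generated (respectively finite) Brouwerian semilattices to finitely generated (respectively finite) CBSes, which is precisely the content of Theorem~\ref{faclocfin}.
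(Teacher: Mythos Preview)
Your proposal is correct, but it takes a different route from the paper. You ultimately reduce the statement to a citation: invoke Diego--McKay for Brouwerian semilattices and transfer via order reversal. That is perfectly valid --- the paper itself cites these references --- but the paper's own proof does more: it sketches McKay's argument directly, via subdirectly irreducibles. Namely, a CBS $L$ is subdirectly irreducible iff $L\setminus\{0\}$ has a least element $u$; in that case $L\setminus\{u\}$ is a sub-CBS, so any generating set of $L$ must contain $u$, and hence $L\setminus\{u\}$ is generated by one fewer element. This gives $\#L\leq\#F_{n-1}+1$ for any $n$-generated subdirectly irreducible $L$, and since $\#F_0=1$ an induction on $n$ (together with the fact that $F_n$ is a subdirect product of $n$-generated subdirectly irreducibles) finishes.

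Your extended discussion of term rewriting correctly identifies the genuine difficulty --- bounding the nesting depth of iterated differences --- and your observation that $x-(y-x)$ is not a join of elementary terms is a nice cautionary example. But you do not actually carry this line through; it remains a sketch of where the obstacle lies rather than a proof. The filtration route you mention would also work but is likewise not executed. What the paper's subdirect-irreducible argument buys is a short, self-contained inductive mechanism that sidesteps term normalization entirely; what your citation-based route buys is brevity at the cost of a black box. If you want a genuine proof rather than a reference, the McKay induction is both shorter and cleaner than either of your outlined alternatives.
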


\begin{proof} We just sketch the proof 
first presented in \cite{mck68}.
A CBS $L$ is subdirectly irreducible iff $L \setminus \lbrace 0 \rbrace$ has a least element, or equivalently $L$ has a single atom, i.e. a minimal element different from $0$.\\
Let $L$ be subdirectly irreducible and $u$ the least element of $L \setminus \lbrace 0 \rbrace$. Then $L \setminus \lbrace u \rbrace$ is a sub-CBS of $L$. This implies that any generating set of $L$ must contain $u$.\\
Moreover if $L$ is generated by $n$ elements then $L \setminus \lbrace u \rbrace$ can be generated by $n-1$ elements. It follows that the cardinality of subdirectly irreducible CBSes generated by $n$ elements is bounded by $\# F_{n-1} +1$ where $F_m$ is the free CBS on $m$ generators. Since $\# F_0=1$ by induction we obtain that $F_m$ is finite for any $m$ because it is a subdirect product of a finite family of subdirectly irreducibles which are generated by $m$ elements.
\end{proof}

Computing the cardinality of $F_m$ is a hard task. It is known that $\# F_0=1, \# F_1=2, \# F_2=18$ and $\# F_3=623,662,965,552,330$. The size of $F_4$ is still unknown. In \cite{koh81} it is proved that the number of join-irreducible elements of $F_4$ is $2,494,651,862,209,437$.
This shows that although the cardinality of the free CBS on a finite number of generators is always finite,  it grows very rapidly.

\subsection{Finite duality}

\begin{proposition}
Any finite CBS is a distributive lattice. 
\end{proposition}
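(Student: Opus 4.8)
The plan is to split the argument in two: first show that a finite CBS is a lattice (this is where finiteness is used, since a general CBS need not have binary meets), and then show the lattice is distributive, which will follow essentially for free from the defining adjunction of the difference operation.

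For the first part, let $L$ be a finite CBS. It has a least element $0$ and binary joins, so by an easy induction every finite subset of $L$ has a join. Given $a,b \in L$, the set $S = \lbrace c \in L : c \leq a \text{ and } c \leq b \rbrace$ is a subset of the finite set $L$ and is nonempty, since $0 \in S$; hence $m := \bigvee S$ exists. Since $a$ and $b$ are upper bounds of $S$, we get $m \leq a$ and $m \leq b$, so $m \in S$, and being the largest element of $S$ it is the greatest lower bound of $\lbrace a,b\rbrace$. Thus $L$ has binary meets and is a lattice; write $a \wedge b$ for the meet.

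For the second part I would exploit the residuation property. Fix $b \in L$. The condition $a - b \leq c$ iff $a \leq b \vee c$ says precisely that $(-) - b$ is left adjoint to $b \vee (-)$. Hence for all $x,y,z \in L$ we have the chain
\begin{align*}
z \leq (b \vee x) \wedge (b \vee y)
&\iff z \leq b \vee x \ \text{and}\ z \leq b \vee y \\
&\iff z - b \leq x \ \text{and}\ z - b \leq y \\
&\iff z - b \leq x \wedge y \\
&\iff z \leq b \vee (x \wedge y),
\end{align*}
so $b \vee (x \wedge y)$ and $(b \vee x) \wedge (b \vee y)$ dominate exactly the same elements and are therefore equal. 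This is the $\vee$-over-$\wedge$ distributive law, and by the classical fact that in any lattice this law holds identically if and only if the $\wedge$-over-$\vee$ law does, $L$ is a distributive lattice.

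I do not expect a genuine obstacle here; the argument is routine. The only point needing care is the order of the two steps: one must know that binary meets exist before writing $x \wedge y$ in the adjunction computation, and finiteness is exactly what supplies this. The displayed chain of equivalences is written so as to be self-contained, rather than invoking ``right adjoints preserve meets'' as a black box, which keeps the deduction transparently free of any circular appeal to distributivity.
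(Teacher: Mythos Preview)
Your proof is correct and follows essentially the same route as the paper: finiteness supplies meets (the paper phrases this as ``complete, hence co-complete''), and then the adjunction $(-)-b \dashv b\vee(-)$ forces $b\vee(-)$ to preserve meets, yielding distributivity. The only difference is that you unpack the right-adjoint-preserves-limits step explicitly, whereas the paper just cites it.
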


\begin{proof} A finite CBS is complete, hence also co-complete, so it is a lattice.
The map $a \vee (-)$ preserves infima because it has a left adjoint given by $ (-) - a$. Thus the distributive laws hold.
\end{proof}

\begin{remark}
Every finite Brouwerian semilattice is a Heyting algebra but it is not true that every Brouwerian semilattices morphism among finite Brouwerian semilattices is a Heyting algebra morphism. 
\end{remark}

The following theorem presents the finite duality result due to K\"{o}hler: 

\begin{theorem} \label{th:dualitybs}
The category $ \mathbf{CBS}_{fin}$ of finite CBSes is dual to the category $\mathbf{P}$ whose objects are finite posets and whose morphisms are partial mappings $\alpha:P \rightarrow Q$ satisfying:
\begin{enumerate}[label=(\roman*)] 
\item $\forall p,q \in \text{dom }  \alpha \text{ if } p < q \text{ then } \alpha(p) < \alpha(q)$. \label{itm:defpmorp1}
\item $\forall p \in \text{dom }  \alpha \text{ and } \forall q \in Q \text{ if } \alpha(p) < q \text{ then } \exists r \in \text{dom } \alpha \text{ such that } p < r \text{ and } \alpha(r)=q$. \label{itm:defpmorp2}
\end{enumerate}
\end{theorem}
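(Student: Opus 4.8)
The plan is to realize the duality by two explicit contravariant functors and then check that the two composites are naturally isomorphic to the identities.

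\smallskip

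In one direction I would send a finite poset $P$ to the CBS $\mathcal{D}(P)$ of its downsets (a CBS by the Example), and a $\mathbf{P}$-morphism $\alpha\colon P\to Q$ to the map $\alpha^{*}\colon\mathcal{D}(Q)\to\mathcal{D}(P)$ given by $\alpha^{*}(D):=\fregiu\{\,p\in\mathrm{dom}\,\alpha : \alpha(p)\in D\,\}$, the downward closure in $P$ of the $\alpha$-preimage of $D$ (the closure is needed precisely because $\mathrm{dom}\,\alpha$ need not be a downset). In the other direction I would send a finite CBS $L$ to the poset $\mathcal{J}(L)$ of its join-irreducibles with the induced order, and a CBS-morphism $h\colon L_{1}\to L_{2}$ to the partial map $h_{\#}\colon\mathcal{J}(L_{2})\to\mathcal{J}(L_{1})$ where $h_{\#}(q)$ is the unique join-irreducible $p$ of $L_{1}$ such that $q$ is a join-irreducible component of $h(p)$, and $h_{\#}(q)$ is undefined when there is no such $p$.

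\smallskip

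The first block of work is to check these assignments are well defined. That $\alpha^{*}$ preserves $0$ and finite joins is immediate, since downward closure sends $\emptyset$ to $\emptyset$ and commutes with finite unions. Preservation of $-$ is the only real point: writing $A-B=\fregiu(A\setminus B)$ in $\mathcal{D}(-)$, the inclusion $\alpha^{*}(D_{1}-D_{2})\subseteq\alpha^{*}(D_{1})-\alpha^{*}(D_{2})$ is where conditions \ref{itm:defpmorp1} and \ref{itm:defpmorp2} enter (the reverse inclusion is a formal consequence of $\alpha^{*}$ being the downward closure of a preimage): given $p_{0}\in\mathrm{dom}\,\alpha$ with $\alpha(p_{0})\le q_{0}$ for some $q_{0}\in D_{1}\setminus D_{2}$, either $\alpha(p_{0})=q_{0}$, and strict monotonicity \ref{itm:defpmorp1} already puts $p_{0}$ on the correct side, or $\alpha(p_{0})<q_{0}$, and the back-condition \ref{itm:defpmorp2} yields $r>p_{0}$ in $\mathrm{dom}\,\alpha$ with $\alpha(r)=q_{0}$, which is the required witness. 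For $h_{\#}$ one first proves uniqueness of the $p$ in its definition: if $q$ were a join-irreducible component of both $h(p_{1})$ and $h(p_{2})$ with $p_{1}\nleq p_{2}$, then from $p_{1}-p_{2}=p_{1}$ (valid for join-irreducibles, by Remark~\ref{listsimplefacts}) one gets $h(p_{1})=h(p_{1})-h(p_{2})$, and the component formula $a-b=\bigvee\{\,g : g\text{ is a join-irreducible component of }a,\ g\nleq b\,\}$ then forces every join-irreducible component of $h(p_{1})$, hence $q$, to be $\nleq h(p_{2})$, contradicting $q\le h(p_{2})$; the comparable case is excluded the same way using $p^{-}\ll p$. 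Conditions \ref{itm:defpmorp1} and \ref{itm:defpmorp2} for $h_{\#}$ are proved with the same identities: \ref{itm:defpmorp1} again from $p_{1}-p_{2}=p_{1}$, and \ref{itm:defpmorp2} from $h(p)-h(p^{-})=h(p-p^{-})=h(p)$, which shows any join-irreducible $q\le h(p')$ with $h_{\#}(q)<p'$ lies strictly below some join-irreducible component of $h(p')$, providing the required $r$. Functoriality of both assignments is then routine, and also uses the easy fact that the composite of two $\mathbf{P}$-morphisms again satisfies \ref{itm:defpmorp1}--\ref{itm:defpmorp2}.

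\smallskip

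The second block is to see the functors are quasi-inverse. On objects, $\mathcal{J}(\mathcal{D}(P))\cong P$ was already noted in the Example, and $\mathcal{D}(\mathcal{J}(L))\cong L$ is Birkhoff's representation $a\mapsto\hat{a}:=\{\,g\in\mathcal{J}(L):g\le a\,\}$; the only point requiring the CBS (rather than merely distributive-lattice) structure is that this isomorphism also transports $-$, which is exactly the reformulation $\widehat{a-b}=\fregiu(\hat{a}\setminus\hat{b})$ of the component formula for $-$ in Remark~\ref{listsimplefacts}. On morphisms one unwinds the definitions: $(h_{\#})^{*}(\hat{a})=\widehat{h(a)}$ because $\widehat{h(a)}$ consists of the join-irreducibles below $h(a)=\bigvee\{\,h(p):p\le a,\ p\text{ join-irreducible}\,\}$, hence the join-irreducibles below some join-irreducible component of such an $h(p)$, which is precisely $\fregiu\{\,q:h_{\#}(q)\le a\,\}$; and $(\alpha^{*})_{\#}$ equals $\alpha$ under the identification $\mathcal{J}(\mathcal{D}(P))\cong P$ because $\fregiu p$ is a join-irreducible component of $\alpha^{*}(\fregiu q)$ iff $p$ is a maximal element of $\{\,p'\in\mathrm{dom}\,\alpha:\alpha(p')\le q\,\}$, and by \ref{itm:defpmorp1} and \ref{itm:defpmorp2} this holds for exactly one $q$, namely $q=\alpha(p)$, when $p\in\mathrm{dom}\,\alpha$, and for no $q$ otherwise. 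Naturality of the two isomorphisms is then a direct check.

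\smallskip

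The step I expect to be the main obstacle is the morphism half of the duality: fixing the correct $h_{\#}$ and verifying it satisfies \ref{itm:defpmorp1} and \ref{itm:defpmorp2}. The tempting definition ``$h_{\#}(q)=$ the least $p$ with $q\le h(p)$'' assigns the right values but too large a domain, since \ref{itm:defpmorp1} forces out any $q$ lying strictly below another point mapped to a non-larger join-irreducible; passing to ``join-irreducible component of $h(p)$'' instead of ``$\le h(p)$'' is what repairs this. More fundamentally, because CBS-morphisms preserve neither meets nor the top, there is no room to argue via filters or via the lattice structure directly: every verification must be pushed through the behaviour of $-$ on join-irreducibles, and in particular the existence clause \ref{itm:defpmorp2} has to be synthesized from the algebraic identity $h(p)-h(p^{-})=h(p)$.
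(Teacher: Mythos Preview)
Your outline is correct and follows exactly the approach the paper points to: the functors $P\mapsto\mathcal{D}(P)$, $\alpha\mapsto(D\mapsto\fregiu\alpha^{-1}(D))$ in one direction and $L\mapsto\mathcal{J}(L)$ in the other, with the quasi-inverse checked via Birkhoff's representation. Note, however, that the paper does not actually prove this theorem; it cites K\"{o}hler~\cite{koh81} and only records how the equivalence acts on objects and morphisms, so your sketch is already substantially more detailed than what appears in the paper.

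One minor remark on your write-up: in the uniqueness argument for $h_{\#}$ you do not need to split into incomparable and comparable cases. If $p_{1}\neq p_{2}$ are join-irreducible then without loss of generality $p_{2}\nleq p_{1}$, whence $p_{2}-p_{1}=p_{2}$ and so $h(p_{2})-h(p_{1})=h(p_{2})$; thus no join-irreducible component of $h(p_{2})$ lies below $h(p_{1})$, contradicting $q\le h(p_{1})$. The detour through $p^{-}\ll p$ is unnecessary here (though it is exactly the right tool for condition~\ref{itm:defpmorp2}, as you note).
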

\begin{proof}
 The proof can be found in \cite{koh81}. We just recall how the equivalence works.
To a finite poset $P$ it is associated the CBS $\mathcal{D}(P)$ of downsets of $P$.\\
To a $\mathbf{P}$-morphism among finite posets it is associated the morphism of CBSes that maps a downset to the downset generated by its preimage. More explicitly, to a $\mathbf{P}$-morphism $f:P \to Q$ is associated the morphism that maps a downset $D$ of $Q$ to $\fregiu f^{-1}(D)=\{ p\in P\mid \exists p'\geq p\; (p'\in \text{dom} f ~\&~f(p')\in D)\}$.\\
On the other hand, to a finite CBS $L$ it is associated the poset of its join-irreducible elements.
\end{proof}

The following proposition is easily checked:

\begin{proposition}
Let $P,Q$ be finite posets and $f : P \rightarrow Q$ a $\mathbf{P}$-morphism. Let $\alpha$ be the associated morphism of CBSes. Then
\begin{enumerate}[label=(\roman*)] 
\item $\alpha$ is injective if and only if $f$ is surjective.
\item $\alpha$ is surjective if and only if $\text{dom }  f = P$ and $f$ is injective.
\end{enumerate}
\end{proposition}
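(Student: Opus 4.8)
The plan is to use the explicit description of $\alpha$ recalled in the proof of Theorem~\ref{th:dualitybs}: $\alpha$ sends a downset $D$ of $Q$ to $\alpha(D)=\fregiu f^{-1}(D)$, the downset of those $p\in P$ such that $f(p')\in D$ for some $p'\ge p$ in the domain of $f$; recall also that $\alpha$, being a morphism of CBSes, is order preserving. I would prove each equivalence by establishing its two implications separately, in each direction exhibiting an explicit ``test'' downset.

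\emph{Part (i).} Assuming $f$ surjective, take $D_1,D_2\in\mathcal D(Q)$ with $\alpha(D_1)=\alpha(D_2)$ and prove $D_1\subseteq D_2$ (the reverse inclusion being symmetric): given $q\in D_1$, surjectivity provides $p$ in the domain of $f$ with $f(p)=q$, so $p\in\alpha(D_1)=\alpha(D_2)$; unwinding this gives $p'\ge p$ in the domain with $f(p')\in D_2$, and then $q=f(p)\le f(p')$ by condition~\ref{itm:defpmorp1}, so $q\in D_2$ since $D_2$ is a downset. For the converse, if $f$ is not surjective, pick $q_0\in Q$ not in the image of $f$; then $\fregiu q_0\setminus\{q_0\}$ is still a downset, strictly smaller than $\fregiu q_0$, and $\alpha(\fregiu q_0)=\alpha(\fregiu q_0\setminus\{q_0\})$, because in the description of $\alpha$ the condition $f(p')\le q_0$ automatically strengthens to $f(p')<q_0$; hence $\alpha$ is not injective.

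\emph{Part (ii).} Assuming the domain of $f$ is all of $P$ and $f$ injective, I would show that every $E\in\mathcal D(P)$ equals $\alpha(\fregiu f(E))$: the inclusion $E\subseteq\alpha(\fregiu f(E))$ is immediate, and for the reverse one, if $p\in\alpha(\fregiu f(E))$ then $f(p')\le f(e)$ for some $p'\ge p$ and some $e\in E$, and injectivity together with condition~\ref{itm:defpmorp2} (applied when $f(p')<f(e)$) forces $p\le e$, whence $p\in E$. For the converse, I argue contrapositively using test downsets $\fregiu p_0$. If some $p_0\in P$ lies outside the domain of $f$, then no $D$ can satisfy $\alpha(D)=\fregiu p_0$: such an equality would give, from $p_0\in\alpha(D)$, some $p'>p_0$ (strict, since $p_0$ is not in the domain) in the domain of $f$ with $f(p')\in D$, but this $p'$ then itself lies in $\alpha(D)=\fregiu p_0$, forcing $p'\le p_0$, a contradiction. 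Hence the domain of $f$ is all of $P$; and if then $f(p_1)=f(p_2)$ for some $p_1\ne p_2$ (necessarily incomparable, by condition~\ref{itm:defpmorp1}), the same scheme applied to $\fregiu p_1$ fails: $p_1\in\alpha(D)$ forces the common value $f(p_1)=f(p_2)$ to lie in $D$ (using condition~\ref{itm:defpmorp1} and that $D$ is a downset), hence $p_2\in\alpha(D)=\fregiu p_1$, i.e.\ $p_2\le p_1$, a contradiction. So $f$ is injective.

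Most of this is routine unwinding of definitions; the genuine difficulty is pinpointing the right test downsets in part (ii) and tracking carefully the partiality of $f$ together with the interplay of conditions~\ref{itm:defpmorp1} and~\ref{itm:defpmorp2}. Alternatively one could obtain (ii) from (i) by duality, since surjections of CBSes are exactly the duals of the monomorphisms of $\mathbf P$, but identifying those monomorphisms as the everywhere-defined injective $\mathbf P$-morphisms requires essentially the same argument.
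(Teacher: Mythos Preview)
Your argument is correct. The paper itself does not supply a proof of this proposition, merely stating that it ``is easily checked'', so there is nothing to compare your route against beyond the fact that you carry out exactly the kind of elementary verification the authors presumably had in mind: unwinding the formula $\alpha(D)=\fregiu f^{-1}(D)$ and testing injectivity/surjectivity on principal downsets.

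One small remark on presentation. In the reverse direction of part~(ii), when $p_0\notin\text{dom }f$, your phrasing ``such an equality would give \ldots some $p'>p_0$'' implicitly covers two cases: either there is no $p'\ge p_0$ in $\text{dom }f$ at all (so $p_0\notin\alpha(D)$ for every $D$, and already $\alpha(D)\neq\fregiu p_0$), or there is such a $p'$ but then it must be strictly above $p_0$ and you derive the contradiction $p'\le p_0$. Both are handled by your argument, but making the first explicit would remove any ambiguity. Similarly, in the non-injective case your sentence ``$p_1\in\alpha(D)$ forces the common value $f(p_1)=f(p_2)$ to lie in $D$'' is justified precisely because $\text{dom }f=P$ has already been established, so that the witness $p'\ge p_1$ lies in the domain and $f(p_1)\le f(p')\in D$; this is what you indicate in parentheses, and it is correct.
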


Duality results involving all Brouwerian semilattices can be found in the recent paper~\cite{bez13} due to G. Bezhanishvili and R. Jansana. Other dualities are described in~\cite{vra86} and~\cite{cel03}.\\

Using finite duality we can show that the variety of CBSes has the amalgamation property.\\
The amalgamation property for Brouwerian semilattices is the algebraic counterpart of a syntactic fact about the implication-conjunction fragment of intuitionistic propositional logic: the interpolation property. The proof that such fragment satisfies this property can be found in~\cite{ren89}.

\begin{theorem} 
The theory of CBSes has the amalgamation property.
\end{theorem}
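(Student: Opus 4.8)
The plan is to reduce the claim to the finite case and then settle it dually via Köhler's duality (Theorem~\ref{th:dualitybs}).

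\emph{Reduction to finite algebras.} I would first argue that it suffices to amalgamate \emph{finite} CBSes. Given embeddings $i\colon A\hookrightarrow B$ and $j\colon A\hookrightarrow C$ of arbitrary CBSes, take a first-order language with a distinct constant for every element of $B$ and of $C$, except that the constant naming $i(a)$ is chosen to coincide with the one naming $j(a)$ for each $a\in A$, and consider the theory consisting of the equations defining CBSes together with the atomic diagrams of $B$ and of $C$. Any model of this theory is a CBS into which $B$ and $C$ embed so as to agree on the image of $A$, i.e. an amalgam. By compactness it is enough to find a model of each finite subtheory; such a subtheory mentions only finitely many constants, hence it already holds in the amalgam of the corresponding finitely generated subalgebras of $A$, $B$, $C$, and these are \emph{finite} by Theorem~\ref{faclocfin}. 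So it remains to amalgamate, inside the variety of CBSes, an arbitrary span $B\hookleftarrow A\hookrightarrow C$ of embeddings of finite CBSes.

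\emph{Dualization.} Write $A=\mathcal{D}(P)$, $B=\mathcal{D}(Q)$, $C=\mathcal{D}(R)$ for finite posets $P$, $Q$, $R$. By Theorem~\ref{th:dualitybs}, together with the proposition above relating injectivity of a morphism of CBSes with surjectivity of the associated $\mathbf{P}$-morphism, the two embeddings dualize to \emph{surjective} $\mathbf{P}$-morphisms $\mu\colon Q\to P$ and $\nu\colon R\to P$, and exhibiting an amalgam $\mathcal{D}(S)$ with embeddings $B\hookrightarrow\mathcal{D}(S)\hookleftarrow C$ compatible over $A$ amounts, contravariantly, to producing a finite poset $S$ and surjective $\mathbf{P}$-morphisms $\sigma\colon S\to Q$, $\rho\colon S\to R$ with $\mu\circ\sigma=\nu\circ\rho$. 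I would build $S$ as a suitable variant of the fibered product $Q\times_{P}R$: the ``coherent'' pairs $(q,r)$ with $q\in\mathrm{dom}\,\mu$, $r\in\mathrm{dom}\,\nu$ and $\mu(q)=\nu(r)$, supplemented by formal pairs recording the points of $Q$ lying outside $\mathrm{dom}\,\mu$ and the points of $R$ lying outside $\mathrm{dom}\,\nu$ (needed so that the projections can be made surjective onto all of $Q$ and $R$), equipped with a partial order refining the product order. With $\sigma$, $\rho$ the two projections (each defined precisely where the corresponding coordinate is a genuine point), the equality $\mu\circ\sigma=\nu\circ\rho$ is immediate from $\mu(q)=\nu(r)$ on coherent pairs and from both sides being undefined on the formal ones, while surjectivity of $\sigma$ (resp.~$\rho$) follows from surjectivity of $\nu$ (resp.~$\mu$).

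\emph{Main obstacle.} The delicate part is the choice of the order on $S$ and the attendant case analysis. The plain product order is ruled out by condition~\ref{itm:defpmorp1}: two pairs differing only in one coordinate would be comparable yet have the same image under one projection; so the order must be tightened — roughly, by demanding a strict increase in both coordinates — and the formal pairs recording points outside the two domains must be woven in with care. One then has to verify condition~\ref{itm:defpmorp2} for $\sigma$ and $\rho$, and it is precisely here that the lifting property and the surjectivity of $\mu$ and $\nu$ enter: given $s\in S$ and a point $q'$ strictly above $\sigma(s)$, lifting in $\mu$ (or, when $q'\notin\mathrm{dom}\,\mu$, the presence of the appropriate formal pair) together with lifting and surjectivity in $\nu$ supplies the required $s'>s$ with $\sigma(s')=q'$. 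Once the order is pinned down, checking that $\sigma$, $\rho$ are genuine $\mathbf{P}$-morphisms is routine. (Alternatively, bypassing duality, the statement is the algebraic counterpart of the interpolation property for the implication--conjunction fragment of intuitionistic propositional logic, proved in~\cite{ren89}.)
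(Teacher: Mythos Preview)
Your strategy---reduce to the finite case, pass through K\"ohler duality, and build a co-amalgam $S$ as a variant of the fibered product---is exactly the paper's, and your compactness reduction is a legitimate alternative to the paper's filtered-colimit argument. The gap is in the ``formal pairs'': one formal element per point of $Q\setminus\mathrm{dom}\,\mu$ is not enough, and this is not merely a matter of choosing the order carefully. Take $P=\{p_1<p_2\}$; $Q=\{q_1<q_2<q_3\}$ with $\mathrm{dom}\,\mu=\{q_1,q_3\}$, $\mu(q_1)=p_1$, $\mu(q_3)=p_2$; and $R$ the disjoint union of two chains $r_1<r_2$ and $r_1'<r_2'$ with $\nu$ total, $\nu(r_1)=\nu(r_1')=p_1$, $\nu(r_2)=\nu(r_2')=p_2$. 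The coherent pairs are $(q_1,r_1),(q_1,r_1'),(q_3,r_2),(q_3,r_2')$. Condition~\ref{itm:defpmorp2} for $\sigma$ at both $(q_1,r_1)$ and $(q_1,r_1')$ with target $q_2$ forces a single formal $\hat q_2$ to lie above both; but $\hat q_2$ must also lie below some $(q_3,-)$ (again by~\ref{itm:defpmorp2}), and then transitivity yields, say, $(q_1,r_1')<(q_3,r_2)$, so $\rho$ violates condition~\ref{itm:defpmorp1} since $r_1'$ and $r_2$ are incomparable in $R$. Hence $q_2$ needs \emph{two} witnesses in $S$, each remembering which branch of $R$ it sits over; the formal pairs are not there only ``so that the projections can be made surjective''.

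The paper resolves this by taking elements of $S$ to be pairs $(\{p\},\{r_1,\dots,r_n\})$ whose second coordinate is an \emph{antichain} in $R$ of $\nu$-preimages of the minimal points of $\{\mu(a):a\geq p,\ a\in\mathrm{dom}\,\mu\}$ (and symmetrically with the roles of $Q$ and $R$ swapped); the order compares such antichains by domination. The multiplicity of preimage choices is precisely what supplies the extra copies needed in the example above. Your outline is right that once the carrier is fixed the verification of~\ref{itm:defpmorp1}--\ref{itm:defpmorp2} for the projections is a case analysis, but the carrier of $S$ is genuinely more elaborate than the set you describe: the ``formal pairs'' must be indexed not by excluded points alone but by excluded points together with compatible data from the other poset.
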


\begin{proof}
First, we show that the pushout of given monomorphisms (= injective maps) $m:L_0 \to L_1$ and $n:L_0 \to L_2$ among \textit{finite} CBSes is still formed by monomorphisms. Then we extend the result to the general case.\\
To do this, by finite duality, it is sufficient to show that the category $\mathbf{P}$ has the coamalgamation property.
This means that, given two surjective $\mathbf{P}$-morphisms among finite posets $f:P \to Q$ and $g:R \to Q$ there exist a finite poset $S$ and two surjective $\mathbf{P}$-morphisms $f':S \to R$ and $g':S \to P$ making the following diagram commute.
\[
\begin{tikzcd}
S \arrow[r, "f'"] \arrow[d, "g'"'] & R \arrow[d, "g"] \\
P \arrow[r, "f"'] & Q
\end{tikzcd}
\]
For any $p \in P$ let $q_1, \ldots, q_n$ be the minimal elements of
\[
\lbrace f(a) \: | \: a \in \text{dom } f \text{ and } a \geq p \rbrace \subseteq Q
\]
it could be that $n=0$ when such set is empty. Define:
\[
S_p= \lbrace (\lbrace p \rbrace, \lbrace r_1, \ldots, r_n \rbrace) \: | \: r_i \in \text{dom } g \text{ and } g(r_i)=q_i \text{ for } i=1, \ldots,n \rbrace
\]
Analogously for any $r \in R$ let $q_1, \ldots, q_n$ be the minimal elements of
\[
\lbrace f(a) \: | \: a \in \text{dom } g \text{ and } a \geq r \rbrace  \subseteq Q
\]
and define
\[
S_r= \lbrace (\lbrace p_1, \ldots, p_n \rbrace, \lbrace r \rbrace) \: | \: p_i \in \text{dom } f \text{ and } f(p_i)=q_i \text{ for } i=1, \ldots,n \rbrace
\]
Let
\[
S_P= \bigcup_{p \in P} S_p \qquad S_R= \bigcup_{r \in R} S_r
\]
And take $S= S_P \cup S_R$.\\
We can immediately observe that if $p \in \text{dom } f$ then 
\[
S_p= \lbrace (\lbrace p \rbrace, \lbrace r \rbrace) \: | \: r \in \text{dom } g \text{ and } f(p)=g(r) \rbrace.
\]
And that
\[
S_P \cap S_R= \lbrace (\lbrace p \rbrace, \lbrace r \rbrace) \: | \: p \in \text{dom } f,\, r \in \text{dom } g  \text{ and } f(p)=g(r) \rbrace.
\]
And finally that if $(\lbrace p \rbrace, \lbrace r_1, \ldots, r_n \rbrace) \in S_p$ then the $r_i$'s are two-by-two incomparable, indeed $g$ is order preserving and the $g(r_i)$'s are incomparable since they are the minimal elements of a subset of $Q$. Thus the elements of the two components of any element of $S$ are two-by-two incomparable.\\
We define an order on $S$ in the following way:\\
let $(A_1,A_2), (B_1,B_2) \in S$ where $A_1,B_1 \subseteq P$ and $A_2,B_2 \subseteq R$, we define
\begin{equation*}
\begin{split}
(A_1,A_2) \leq (B_1,B_2) \qquad  \text{iff} \qquad & \forall \, y \in B_1 \: \exists \, x \in A_1 \text{ such that } x \leq y \\
& \text{and} \\
& \forall \, y \in B_2 \: \exists \, x \in A_2 \text{ such that } x \leq y
\end{split}
\end{equation*}
This order relation is clearly reflexive.\\
It is antisymmetric, indeed let $(A_1,A_2) \leq (B_1,B_2)$ and $(B_1,B_2) \leq (A_1,A_2)$ then for any $y \in B_1$ there exists $x \in A_1$ such that $x \leq y$ and there exists $z \in B_1$ such that $z \leq x$. Since the elements of $B_1$ are incomparable we get $z=y$ and thus $x=y$. Therefore $B_1 \subseteq A_1$. Symmetrically we get $A_1 \subseteq B_1$ and then $A_1=B_1$. Reasoning similarly we get $A_2=B_2$ and then $(A_1,A_2) = (B_1,B_2)$.\\
It is transitive, indeed let $(A_1,A_2) \leq (B_1,B_2)$ and $(B_1,B_2) \leq (C_1,C_2)$ then for any $z \in C_1$ there exists $y \in B_1$ such that $y \leq z$ and there exists $x \in A_1$ such that $x \leq y$ and hence also $x \leq z$. Analogously for the second components. Therefore $(A_1,A_2) \leq (C_1,C_2)$.\\
Thus we have defined a partial order on $S$.\\
Take $g':S \to P$ and $f':S \to R$ as:
\begin{equation*}
\begin{split}
\text{dom } g' = S_P \qquad \qquad & \text{dom } f' = S_R \\
g'(\lbrace p \rbrace, A_2)=p \qquad \qquad & f'(A_1, \lbrace r \rbrace)=r
\end{split}
\end{equation*}
Then
\begin{equation*}
\begin{split}
\text{dom } f \circ g' & = (g')^{-1} (\text{dom } f)  = \lbrace (\lbrace p \rbrace, A_2)\in S_P \: | \: p \in \text{dom } f \rbrace \\
& =\lbrace (\lbrace p \rbrace, \lbrace r \rbrace) \: | \: p \in \text{dom } f, \, r \in \text{dom } g \text{ and } f(p)=g(r) \rbrace  \\
& = (f')^{-1} (\text{dom } g)  = \lbrace (A_1, \lbrace r \rbrace)\in S_R \: | \: r \in \text{dom } g \rbrace \\
& = \text{dom } g \circ f'
\end{split}
\end{equation*}
and if $p \in \text{dom } f, \, r \in \text{dom } g \text{ and } f(p)=g(r)$ then
\[
(f \circ g')((\lbrace p \rbrace, \lbrace r \rbrace))=f(p)=g(r)=(g \circ f')((\lbrace p \rbrace, \lbrace r \rbrace))
\]
$g'$ is surjective: indeed let $p \in P$ and $q_1, \ldots, q_n$ be the minimal elements of $\lbrace f(a) \: | \: a \in \text{dom } f \text{ and } a \geq p \rbrace$, by surjectivity of $g$ there exist $r_1, \ldots, r_n \in \text{dom } g$ such that $g(r_i)=q_i$, then $(\lbrace p \rbrace, \lbrace r_1, \ldots, r_n \rbrace) \in S_p \subseteq \text{dom } g'$ and $g'((\lbrace p \rbrace, \lbrace r_1, \ldots, r_n \rbrace))=p$. Analogously for the surjectivity of $f'$.\\
It remains to show that $g',f'$ are $\mathbf{P}$-morphisms.\\
Let $(\lbrace p \rbrace, A), (\lbrace p' \rbrace, B) \in S_P= \text{dom } g'$ such that $(\lbrace p \rbrace, A) < (\lbrace p' \rbrace, B)$, we show that $p < p'$.\\
Clearly $p \leq p'$ by the definition of the order on $S$.\\
Suppose that $p=p'$, let $q_1, \ldots, q_n$ be the minimal elements of $\lbrace f(a) \: | \: a \in \text{dom } f \text{ and } a \geq p \rbrace$. Let $A=\lbrace r_1, \ldots, r_n \rbrace$ and $B=\lbrace r_1', \ldots, r_n' \rbrace$ be such that $g(r_i)=g(r_i')=q_i$ for $i=1, \ldots,n$. Then for any $r_i' \in B$ there exists $r_j \in A$ such that $r_j \leq r_i'$. If $r_j \leq r_i'$ with $j \neq i$ then $q_j=g(r_j) \leq g(r_i')=q_i$ and this is absurd because the $q_i$'s are incomparable. Therefore $r_i \leq r_i'$ for any $i=1, \ldots, n$, if $r_i < r_i'$ then $q_i=g(r_i) < g(r_i')=q_i$ which is absurd. Thus $r_i=r_i'$ and $A=B$, we have obtained a contradiction. Analogous for $f'$.\\
Therefore $g',f'$ preserve the strict order.\\
Let $(\lbrace p \rbrace, A) \in S_P$ and $p < p'$.\\
Let $q_1, \ldots, q_n$ be the minimal elements of $\lbrace f(a) \: | \: a \in \text{dom } f \text{ and } a \geq p \rbrace$ and $q_1', \ldots, q_m'$ be the minimal elements of $\lbrace f(a) \: | \: a \in \text{dom } f \text{ and } a \geq p' \rbrace$; since the latter set is included in the former and they are both finite we have that for any $q_j'$ there exist $q_i$ such that $q_i \leq q_j'$.\\
Let $A= \lbrace r_1, \ldots, r_n \rbrace$ with $g(r_i)=q_i$. Since $g$ is a $\mathbf{P}$-morphism and for any $q_j'$ there exists $i$ such that $g(r_i)=q_i \leq q_j'$, there exists $r_j' \in \text{dom } g$ such that $r_i \leq r_j'$ and $g(r_j')=q_j'$. Take $B= \lbrace r_1', \ldots, r_m' \rbrace$ then for any $r_j'$ there exists $r_i$ such that $r_i \leq r_j'$, therefore $(\lbrace p \rbrace, A) <  (\lbrace p' \rbrace, B) \in S_P$. Analogous for $f'$.\\
Thus $f',g'$ are surjective $\mathbf{P}$-morphisms and they coamalgamate $f,g$.\\
We now want to prove \textit{the general case}: pushouts of monos along monos in the category of CBSes are monos.\\
Suppose $m:L_0 \to L_1$ and $n:L_0 \to L_2$ are monos and $L_0,L_1,L_2$ are CBSes. Since the variety is locally finite by Theorem \ref{faclocfin}, we can consider $L_0,L_1,L_2$ as filtered colimits of families of finite CBSes. Assume without loss of generality that $L_1 \cap L_2 =L_0$ and $m,n$ are inclusions, then we can consider the families indexed by $\mathcal{P}_{fin}(L_1 \cup L_2)$ given for any finite subset $S \subseteq L_1 \cup L_2$ by the sub-CBSes respectively of $L_1,L_2$ and $L_0$ generated respectively by $S \cap L_1$, $S \cap L_2$ and $m^{-1}(S \cap L_1) \cap n^{-1}(S \cap L_2)=S \cap L_0$. Then we can compute the pushouts of the restrictions of the monos for any index, the colimit of all these pushouts is a mono because each of them is a mono. Thus we have obtained that the pushout of $m$ along $n$ and the pushout of $n$ along $m$  are monomorphisms.
\end{proof}

\subsection{Existentially closed CBSes} \label{subsectcharexclosed}

In this subsection we want to characterize the existentially closed CBSes using the finite extensions of their finite sub-CBSes.

\begin{definition}
Let $T$ be a first order theory and $\mathcal{A}$ a model of $T$.
$\mathcal{A}$ is said to be \emph{existentially closed for $T$} if for every model $\mathcal{B}$ of $T$ such that $\mathcal{A} \subseteq \mathcal{B}$ every existential sentence in the language extended with names of elements of $\mathcal{A}$ which holds in $\mathcal{B}$ also holds in $\mathcal{A}$
\end{definition}

The following proposition is well-known from textbooks~\cite{ck90}:

\begin{proposition}
Let $T$ be a universal theory. If $T$ has a model completion $T^*$, then the class of models of $T^*$ is the class of models of $T$ which are existentially closed for $T$.
\end{proposition}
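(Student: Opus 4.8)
The plan is to prove the two set-theoretic inclusions separately. By the definition of a model completion we may use that $T\subseteq T^{*}$, that every model of $T$ embeds into a model of $T^{*}$, and that $T^{*}$ is model complete (the last because, for every $\mathcal{C}\models T^{*}$, the theory $T^{*}\cup\mathrm{Diag}(\mathcal{C})$ is complete, which is precisely model completeness of $T^{*}$; here one uses $T\subseteq T^{*}$ to pass from ``$\mathcal{C}\models T$'' to ``$\mathcal{C}\models T^{*}$'' in Robinson's criterion). Universality of $T$ is the standing hypothesis making ``existentially closed for $T$'' the pertinent notion. Throughout I will freely use that existential sentences, possibly with constants naming elements of a substructure, are preserved upwards along embeddings, and that an inclusion of models of a model complete theory is elementary.

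First, every model of $T^{*}$ is existentially closed for $T$. Let $\mathcal{A}\models T^{*}$, so $\mathcal{A}\models T$, and let $\mathcal{A}\subseteq\mathcal{B}$ with $\mathcal{B}\models T$. Embedding $\mathcal{B}$ into some $\mathcal{C}\models T^{*}$ and renaming elements, we obtain $\mathcal{A}\subseteq\mathcal{B}\subseteq\mathcal{C}$ with $\mathcal{A},\mathcal{C}\models T^{*}$, hence $\mathcal{A}\preceq\mathcal{C}$. If an existential sentence with parameters from $\mathcal{A}$ holds in $\mathcal{B}$, then it holds in $\mathcal{C}$, and therefore in $\mathcal{A}$, since $\mathcal{A}\preceq\mathcal{C}$ and the parameters lie in $\mathcal{A}$. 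So $\mathcal{A}$ is existentially closed for $T$.

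Conversely, let $\mathcal{A}\models T$ be existentially closed. Extend it to some $\mathcal{B}\models T^{*}$, say $\mathcal{A}\subseteq\mathcal{B}$. I will show $\mathcal{A}\preceq\mathcal{B}$, which gives $\mathcal{A}\models T^{*}$ because $T^{*}$ is a set of sentences holding in $\mathcal{B}$. To verify the Tarski--Vaught test, take a formula $\varphi(x,\bar{y})$ and a tuple $\bar{a}$ from $\mathcal{A}$ with $\mathcal{B}\models\exists x\,\varphi(x,\bar{a})$; I must produce a witness inside $\mathcal{A}$. Since $T^{*}$ is model complete, choose an existential formula $\exists\bar{z}\,\theta(\bar{z},x,\bar{y})$, with $\theta$ quantifier-free, that is $T^{*}$-equivalent to $\varphi(x,\bar{y})$. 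Then $\exists x\,\exists\bar{z}\,\theta(\bar{z},x,\bar{a})$ is an existential sentence with parameters from $\mathcal{A}$ that holds in $\mathcal{B}$; as $\mathcal{A}$ is existentially closed and $\mathcal{A}\subseteq\mathcal{B}\models T$, it holds in $\mathcal{A}$, so there is $a\in\mathcal{A}$ with $\mathcal{A}\models\exists\bar{z}\,\theta(\bar{z},a,\bar{a})$. This is again an existential sentence with parameters from $\mathcal{A}$, hence preserved upwards to $\mathcal{B}$, giving $\mathcal{B}\models\exists\bar{z}\,\theta(\bar{z},a,\bar{a})$, i.e.\ $\mathcal{B}\models\varphi(a,\bar{a})$. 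Thus $a$ is the desired witness, Tarski--Vaught applies, and $\mathcal{A}\preceq\mathcal{B}$.

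The one ingredient that is not a two-line manipulation, and the step I expect to be the crux, is the equivalence used in the converse direction: a model complete theory makes every formula equivalent (modulo the theory) to an existential one. I would invoke A.\ Robinson's test / Lindström's criterion for this, which is standard textbook material; in fact the whole proposition is, so in the paper it is perfectly legitimate to cite \cite{ck90}, the argument above being its proof. An alternative to the Tarski--Vaught step runs through an interleaved elementary chain $\mathcal{A}\subseteq\mathcal{C}_{0}\subseteq\mathcal{A}_{1}\subseteq\mathcal{C}_{1}\subseteq\cdots$ built by a diagram/compactness lemma, exploiting that $\mathcal{A}$ and each $\mathcal{C}_{i}\models T^{*}$ is existentially closed, but this requires more care to set up than the route above.
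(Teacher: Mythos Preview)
Your argument is correct and is essentially the standard textbook proof; the paper itself does not give a proof at all but simply cites \cite{ck90}, exactly as you anticipated in your final paragraph.
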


Thanks to the locally finiteness and the amalgamability, by an easy model-theoretic reasoning we obtain the following characterization of the existentially closed CBSes:

\begin{theorem} \label{theoexclosedcharac}
Let $L$ be a CBS. $L$ is existentially closed iff for any finite sub-CBS $L_0 \subseteq L$ and for any finite extension $C \supseteq L_0$ there exists an embedding $C \to L$ fixing $L_0$ pointwise. 
\end{theorem}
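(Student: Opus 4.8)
The plan is to prove the two implications separately, drawing only on the definition of existentially closed structure together with two facts established above: local finiteness of the variety of CBSes (Theorem~\ref{faclocfin}) and its amalgamation property. Local finiteness keeps all the structures that occur in the argument finite, and amalgamation is what lets us realize a prescribed finite extension inside a common superstructure.

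\emph{($\Rightarrow$)} Assume $L$ is existentially closed, fix a finite sub-CBS $L_0\subseteq L$ and a finite extension $C\supseteq L_0$. Amalgamating $L$ and $C$ over $L_0$ yields a CBS $M$ with embeddings $\iota\colon L\hookrightarrow M$ and $j\colon C\hookrightarrow M$ agreeing on $L_0$; identify $L$ with $\iota(L)$, so $L\subseteq M$. Because $C$ is finite and the signature $\{0,\vee,-\}$ is finite, the assertion ``$C$ embeds into $M$ over $L_0$'' is expressible by a single existential sentence $\varphi$ with parameters (names) for the elements of $L_0\subseteq L$: enumerating $C\setminus L_0$ as $c_1,\dots,c_m$, put $\varphi:=\exists y_1\cdots\exists y_m\,\delta$, where $\delta$ is the finite conjunction of all equations $y_i\vee y_j=y_k$, $y_i-y_j=y_k$, $y_i=0$, together with all disequations $y_i\neq y_j$ and $y_i\neq a$ ($a\in L_0$) that hold for the corresponding elements of $C$ (the $y_i$ ranging over $C\setminus L_0$, while $L_0$ supplies the constants). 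The tuple $j(c_1),\dots,j(c_m)$ witnesses $M\models\varphi$; since $L\subseteq M$ and $L$ is existentially closed, $L\models\varphi$, and a witnessing tuple defines a map $C\to L$ that respects all operations, is injective, and is the identity on $L_0$ — that is, an embedding $C\to L$ fixing $L_0$ pointwise.

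\emph{($\Leftarrow$)} Assume the embedding condition. Let $M\supseteq L$ be a CBS and let $\varphi=\exists\bar y\,\psi(\bar y,\bar a)$, with $\psi$ quantifier-free and $\bar a$ a tuple from $L$, be an existential sentence (with names from $L$) such that $M\models\varphi$. Choose $\bar b$ in $M$ with $M\models\psi(\bar b,\bar a)$ and let $C$ be the sub-CBS of $M$ generated by the finite set $\bar a\cup\bar b$; by Theorem~\ref{faclocfin}, $C$ is finite, and since $\psi$ is quantifier-free, $C\models\psi(\bar b,\bar a)$. Let $L_0$ be the (finite) sub-CBS of $L$ generated by $\bar a$; then $L_0\subseteq C$, so by hypothesis there is an embedding $h\colon C\to L$ fixing $L_0$, hence $h(\bar a)=\bar a$. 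As embeddings preserve quantifier-free formulas, $L\models\psi(h(\bar b),\bar a)$, so $L\models\varphi$. Therefore $L$ is existentially closed.

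The argument is essentially routine once the two cited results are in hand; the one step deserving care is the translation in the first implication, namely that ``$C$ embeds into $M$ over $L_0$'' is a genuine existential first-order statement. This is exactly where the finiteness of $C$ (supplied by local finiteness) and of the signature are used: without them $\delta$ would be an infinite conjunction.
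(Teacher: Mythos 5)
Your proof follows essentially the same route as the paper: the same local-finiteness argument for the direction ``embedding condition $\Rightarrow$ existentially closed'', and the same amalgamation-plus-finite-diagram argument for the converse, just with the two implications presented in the opposite order. The one place that needs repair is the formula $\delta$ in the ($\Rightarrow$) direction. As literally specified, $\delta$ only contains equations of the shapes $y_i\vee y_j=y_k$, $y_i-y_j=y_k$, $y_i=0$, i.e.\ it only constrains the operations among the \emph{new} elements (and the clause $y_i=0$ is vacuous, since $0\in L_0$ while the $y_i$ name elements of $C\setminus L_0$). This is not enough: to guarantee that a witnessing tuple defines a homomorphism $C\to L$ extending the identity on $L_0$, you must also record all mixed instances in which parameters from $L_0$ occur as arguments or as values, e.g.\ $y_i\vee a=y_j$, $a-y_i=y_j$, $y_i-a=b$, $y_i\vee y_j=a$ with $a,b\in L_0$; otherwise nothing forces the witnesses to interact correctly with $L_0$ under $\vee$ and $-$, and the induced map need not respect the operations. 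Your parenthetical ``$L_0$ supplies the constants'' suggests you intend exactly this, and the paper makes it explicit by letting $\Sigma$ consist of \emph{all} sentences $c*c'=c''$ with $c,c',c''$ ranging over the whole of $C$ before replacing the elements of $C\setminus L_0$ by variables. With $\delta$ taken to be this full quantifier-free diagram of $C$ with parameters from $L_0$ (plus your disequations, which do suffice for injectivity), the argument is complete and coincides with the paper's.
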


\begin{proof}
First, we prove that if for any finite sub-CBS $L_0 \subseteq L$ and for any finite extension $C \supseteq L_0$ there exists an embedding $C \to L$ fixing $L_0$ pointwise, then $L$ is existentially closed.
Let $D$ be an extension of $L$ and $\exists x_1, \ldots, x_m \varphi (x_1, \ldots, x_m, a_1, \ldots,a_n)$ an existential $\mathcal{L}_L$-sentence, where $\varphi (x_1, \ldots, x_m, a_1, \ldots,a_n)$ is quantifier free and $a_1, \ldots, a_n \in L$.\\
Suppose $D \vDash \exists x_1, \ldots, x_m \varphi (x_1, \ldots, x_m, a_1, \ldots,a_n)$.\\
Let $d_1, \ldots, d_m$ be elements of $D$ such that $D \vDash \varphi (d_1, \ldots, d_m, a_1, \ldots,a_n)$.\\
Consider the sub-CBS $L_0 \subseteq L$ generated by $a_1, \ldots, a_n$ and the sub-CBS $C \subseteq D$ generated by $d_1, \ldots, d_m, a_1, \ldots,a_n$. They are both finite because they are finitely generated and the CBSes form a locally finite variety.\\
By hypothesis there exists an embedding $C \to L$ fixing $L_0$ pointwise.\\
Let $d_1', \ldots, d_m'$ be the images of $d_1, \ldots, d_m$ by this embedding. Thus $L \vDash \varphi (d_1', \ldots, d_m', a_1, \ldots,a_n)$ because $\varphi$ is quantifier free.\\
Therefore $L \vDash \exists x_1, \ldots, x_m \varphi (x_1, \ldots, x_m, a_1, \ldots,a_n)$. It follows that $L$ is existentially closed.\\
To prove the other implication, suppose $L$ is existentially closed.\\
By amalgamation property there exists a CBS $D$ amalgamating $L$ and $C$ over $L_0$.\\
\[
\begin{tikzcd}[row sep=tiny]
& L \arrow[rd]  & \\
L_0 \arrow[rd, hook] \arrow[ru, hook] & & D  \\
& C \arrow[ru] & 
\end{tikzcd}
\]
Let $\Sigma$ be the set of quantifier free $\mathcal{L}_C$-sentences of the form $c * c'=c''$ true in $C$ where $c,c',c'' \in C$ and $*$ is either $\vee$ or $-$. Hence $(C,\Sigma)$ is a finite presentation of $C$.\\
Now let $c_1,\ldots,c_r,a_1, \ldots, a_n$ be an enumeration of the elements in $C$ where the $a_i$'s are the elements in $L$. We obtain the quantifier free $\mathcal{L}_C$-sentence $\sigma(c_1,\ldots,c_r,a_1, \ldots,a_n)$ by taking the conjunction of all the sentences in $\Sigma$ and all the sentences of the form $\neg (c = c')$ for every $c,c' \in C$ such that $c \neq c'$.\\
Clearly $\exists x_1, \ldots, x_r \sigma(x_1,\ldots,x_r,a_1, \ldots,a_n)$ is an existential $\mathcal{L}_L$-sentence true in $D$. Since $L$ is existentially closed, $L \vDash \exists x_1, \ldots, x_r \sigma(x_1,\ldots,x_r,a_1, \ldots,a_n)$. Let $c_1', \ldots, c_r' \in L$ be such that $L \vDash \sigma(c_1',\ldots,c_r',a_1, \ldots,a_n)$. The map $C \to L$ fixing $L_0$ pointwise and mapping $c_i$ to $c_i'$ is an embedding. Indeed it is injective and an homomorphism by definition of the sentence $\sigma$.
\end{proof}

\section{Minimal finite extensions} \label{section:minimalfiniteextensions}

In this section we focus on the finite extensions of CBSes. We are interested in particular to the minimal ones since any finite extension can be decomposed in a finite chain of minimal extensions. We will study minimal finite extensions by describing the properties of some elements which generate them. This investigation will lead us to another characterization of the existentially closed CBSes.

\begin{definition} \label{deforder}
Let $P$ be a poset, $P_0 \subseteq P$ and $\mathcal{F}$ a partition of $P_0$, let $A,B \in \mathcal{F}$.\\
We say that $A \leq B$ iff there exist $a \in A, b \in B$ such that $a \leq b$.
\end{definition}

\begin{proposition} \label{proppartition}
Let $P$ be a finite poset.\\
To give a surjective $\mathbf{P}$-morphism $f$ from $P$ to any finite poset is equivalent, up to isomorphism, to give a partition $\mathcal{F}$ of a subset of $P$ such that:
\begin{enumerate}
\item for all $A,B \in \mathcal{F}$ we have that if $A \leq B$ and $B \leq A$ then $A=B$, \label{itm:condpartantisym}
\item for all $A,B \in \mathcal{F}$ and $a \in A$ if $A \leq B$ then there exists $b \in B$ such that $a \leq b$,\label{itm:condpartstar}
\item for all $A \in \mathcal{F}$ we have that all the elements of $P$ in $A$ are two-by-two incomparable.\label{itm:condpartincomp}
\end{enumerate}
\end{proposition}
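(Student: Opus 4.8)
The plan is to exhibit an explicit back-and-forth correspondence: to a surjective $\mathbf{P}$-morphism $f\colon P\to Q$ we attach the family of its nonempty fibres $\mathcal{F}=\{f^{-1}(q)\mid q\in Q\}$, regarded as a partition of $\text{dom}\, f\subseteq P$, and conversely to a partition $\mathcal{F}$ of a subset $P_0\subseteq P$ we attach the quotient map $P\to\mathcal{F}$ with domain $P_0$. The bulk of the work is checking that these constructions land where they should and are mutually inverse up to isomorphism.

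First I would go from a surjective $\mathbf{P}$-morphism $f\colon P\to Q$ to $\mathcal{F}$. Surjectivity makes every fibre nonempty, so $\mathcal{F}$ really is a partition of $\text{dom}\, f$, and $q\mapsto f^{-1}(q)$ is a bijection $Q\to\mathcal{F}$. I would show this bijection is an order isomorphism for the relation of Definition \ref{deforder}: if $q\leq q'$, then either $q=q'$ (trivial) or, picking any $p\in f^{-1}(q)$, condition \ref{itm:defpmorp2} yields $r>p$ with $f(r)=q'$, witnessing $f^{-1}(q)\leq f^{-1}(q')$; conversely $a\leq b$ with $f(a)=q$, $f(b)=q'$ forces $q\leq q'$ by \ref{itm:defpmorp1}. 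In particular $(\mathcal{F},\leq)$ is a poset, which gives antisymmetry \ref{itm:condpartantisym}; condition \ref{itm:condpartstar} is exactly \ref{itm:defpmorp2} restated for fibres, and condition \ref{itm:condpartincomp} is the contrapositive of \ref{itm:defpmorp1} applied inside a single fibre.

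Conversely, starting from a partition $\mathcal{F}$ of $P_0\subseteq P$ satisfying \ref{itm:condpartantisym}--\ref{itm:condpartincomp}, I would first check that the relation $\leq$ of Definition \ref{deforder} is a partial order on $\mathcal{F}$: reflexivity is immediate since blocks are nonempty, antisymmetry is \ref{itm:condpartantisym}, and transitivity comes from \ref{itm:condpartstar} — given $A\leq B\leq C$ with $a\in A$, $b\in B$, $a\leq b$, apply \ref{itm:condpartstar} to $B\leq C$ and $b\in B$ to get $c\in C$ with $b\leq c$, so $a\leq c$ and $A\leq C$. Then the quotient map $f\colon P\to\mathcal{F}$ with $\text{dom}\, f=P_0$ sending each $p\in P_0$ to its block is surjective, and it is a $\mathbf{P}$-morphism: \ref{itm:defpmorp1} holds because $p<q$ in $P_0$ forces distinct blocks by \ref{itm:condpartincomp}, hence $f(p)<f(q)$; and \ref{itm:defpmorp2} holds because if $f(p)<B$ then \ref{itm:condpartstar} provides $r\in B$ with $p\leq r$, and $r\neq p$ since the blocks $f(p)$ and $B$ are disjoint, so $p<r$ with $f(r)=B$.

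Finally I would note the two passages are mutually inverse: the fibres of the quotient map built from $\mathcal{F}$ are precisely the blocks of $\mathcal{F}$, and the quotient map built from the fibre partition of $f$ is $f$ followed by the order isomorphism $Q\cong\mathcal{F}$ of the second paragraph, hence coincides with $f$ up to renaming the codomain. The genuinely delicate points are the transitivity of $\leq$ on $\mathcal{F}$ and the lifting condition \ref{itm:defpmorp2} for the reconstructed morphism — both rely essentially on the ``$\star$''-condition \ref{itm:condpartstar} (without it the quotient map would be merely an order-preserving partial function) — so that is where the care should be concentrated; the rest, including bookkeeping for the ``up to isomorphism'' clause, is routine.
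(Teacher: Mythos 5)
Your proposal is correct and follows essentially the same route as the paper: fibres of $f$ give the partition in one direction, the quotient with the order of Definition \ref{deforder} gives the $\mathbf{P}$-morphism in the other, transitivity and the lifting condition both resting on condition \ref{itm:condpartstar}, and the ``up to isomorphism'' clause resting on the observation that $q\mapsto f^{-1}(q)$ is an order isomorphism (the paper phrases this as $f(a)\leq f(b)$ iff $f^{-1}(f(a))\leq f^{-1}(f(b))$). The only cosmetic difference is that you verify the order isomorphism up front and then read off conditions \ref{itm:condpartantisym}--\ref{itm:condpartincomp} from it, whereas the paper checks the three conditions directly; the content is the same.
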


\begin{proof}
Given a surjective $\mathbf{P}$-morphism $f : P \rightarrow Q$, the partition $\mathcal{F}$ of $\text{dom } f \subseteq P$ is obtained by taking the collection of the fibers of $f$. 
$\mathcal{F}$ satisfies \ref{itm:condpartantisym} because $f$ is order preserving and the order on $Q$ is antisymmetric. Furthermore $\mathcal{F}$ satisfies \ref{itm:condpartstar} as a consequence of condition \ref{itm:defpmorp2} in the definition of $\mathbf{P}$-morphism. Finally, $\mathcal{F}$ satisfies \ref{itm:condpartincomp} because $\mathbf{P}$-morphisms are strict order preserving.\\
On the other hand, given a partition $\mathcal{F}$ of a subset $P_0$ of $P$ satisfying the conditions \ref{itm:condpartantisym}, \ref{itm:condpartstar} and \ref{itm:condpartincomp}, we obtain a poset $Q$ by taking the quotient set of $P_0$ given by $\mathcal{F}$ with the order defined in Definition \ref{deforder}. The partial map $f : P \rightarrow Q$ is just the projection onto the quotient.\\
$Q$ is a poset: the order of $Q$ is clearly reflexive, it is antisymmetric because $\mathcal{F}$ satisfies \ref{itm:condpartantisym}. It is also transitive because if $A \leq B$ e $B \leq C$ then there exist $a \in A, \ b, b' \in B, \ c \in C$ such that $a \leq b, \ b' \leq c$; since \ref{itm:condpartstar} holds, there exist $c' \in C$ such that $b \leq c'$, hence $a \leq c'$ and $A \leq C$. The projection $f$ is order preserving, it is a $\mathbf{P}$-morphism because \ref{itm:condpartstar} holds and it is obviously surjective.\\
It remains to show that a surjective $\mathbf{P}$-morphism $f:P \rightarrow Q$ differs by an isomorphism to the projection onto the quotient defined by the partition given by the fibers of $f$. This follows from the fact that for any $a,b \in \text{dom } f$ it is $f(a) \leq f(b)$ iff $f^{-1}(f(a)) \leq f^{-1}(f(b))$ (notice that $f^{-1}(f(a))$ is the element of $\mathcal{F}$ containing $a$). Indeed if $f(a) \leq f(b)$, since $f$ is a $\mathbf{P}$-morphism, there exists $b'$ such that $a \leq b'$ and $f(b')=f(b)$, therefore since $a \leq b'$ it is $f^{-1}(f(a)) \leq f^{-1}(f(b))$. The other direction of the implication holds because $f$ is order preserving.
\end{proof}

\begin{definition} \label{defminimalpmorph}
Let $P,Q$ be finite posets and  $f : P \rightarrow Q$ a surjective $\mathbf{P}$-morphism (or equivalently: let $\mathcal{F}$ satisfy conditions \ref{itm:condpartantisym}, \ref{itm:condpartstar} and \ref{itm:condpartincomp} of Proposition \ref{proppartition}). We say that $f$ (or $\mathcal{F}$) is minimal if $\#P=\#Q+1$.
\end{definition}

\begin{remark}
If $\mathcal{F}$ is minimal, then at most one element of $\mathcal{F}$ is not a singleton.
\end{remark}

\begin{theorem} \label{thchain}
Let $f:P \rightarrow Q$ be a surjective $\mathbf{P}$-morphism between finite posets. Let $n = \#P-\#Q$. Then there exist $Q_0, \ldots , Q_n$ with $Q_0=P$, $Q_n=Q$ and $f_i:Q_{i-1} \rightarrow Q_i$ which are minimal surjective $\mathbf{P}$-morphisms for $i=1,\ldots,n$ such that $f= f_n \circ \cdots \circ f_1$. 
\end{theorem}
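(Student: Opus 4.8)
The plan is to work with the combinatorial description of surjective $\mathbf{P}$-morphisms supplied by Proposition~\ref{proppartition} and to argue by induction on $n=\#P-\#Q$. So I would view $f$ as a partition $\mathcal{F}$ of $P_0:=\text{dom}\,f\subseteq P$ satisfying conditions \ref{itm:condpartantisym}--\ref{itm:condpartincomp}, and identify $Q$ with the quotient poset $P/\mathcal{F}$; then $\#Q=\#\mathcal{F}$ and $n=\#P-\#\mathcal{F}$. The base case $n=0$ forces $P_0=P$ with all blocks singletons, so $f$ is an isomorphism and the statement is trivial. For $n\geq1$ it suffices to peel off a single minimal surjective $\mathbf{P}$-morphism — at one end of the desired chain — in such a way that what remains is a surjective $\mathbf{P}$-morphism between posets whose cardinalities differ by $n-1$, to which the induction hypothesis then applies.

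The case $P_0\neq P$ is easy: I would pick some $p\in P\setminus P_0$, let $P'=P\setminus\{p\}$ carry the induced order, and take $f_1\colon P\to P'$ to be the identity on $P'$, undefined at $p$; this is visibly a minimal surjective $\mathbf{P}$-morphism. Restricting $f$ to $P_0\subseteq P'$ yields a surjective $\mathbf{P}$-morphism $f'\colon P'\to Q$ with $f=f'\circ f_1$ (conditions \ref{itm:defpmorp1} and \ref{itm:defpmorp2} for $f'$ follow immediately from those for $f$, since the order of $P'$ is inherited from $P$ and $p\notin P_0$), and $\#P'-\#Q=n-1$, so we recurse on $f'$.

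The substantial case is $P_0=P$. Since $n\geq1$, some block of $\mathcal{F}$ has size $\geq2$; using that $(\mathcal{F},\leq)$ with $\leq$ as in Definition~\ref{deforder} is a poset (Proposition~\ref{proppartition}), I would choose a block $A$ with $\#A\geq2$ that is \emph{minimal}, in $(\mathcal{F},\leq)$, among the blocks of size $\geq2$. Then I split $A=A_1\sqcup A_2$ into nonempty parts in any way and set $\mathcal{F}_1=(\mathcal{F}\setminus\{A\})\cup\{A_1,A_2\}$, a partition of $P$ refining $\mathcal{F}$. Granting the key claim that $\mathcal{F}_1$ still satisfies \ref{itm:condpartantisym}--\ref{itm:condpartincomp}, it defines a surjective $\mathbf{P}$-morphism $f_1\colon P\to Q':=P/\mathcal{F}_1$, and re-merging the $\mathcal{F}_1$-class of $A_1$ with that of $A_2$ gives a surjective map $h\colon Q'\to Q$ corresponding (via Proposition~\ref{proppartition}) to the partition of $Q'$ having one two-element block (those two classes) and singletons elsewhere. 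Because $A$ is an antichain the two classes are incomparable in $Q'$, and the remaining conditions for this partition of $Q'$ reduce to conditions for $\mathcal{F}$ applied to the block $A$; hence $h$ is a minimal surjective $\mathbf{P}$-morphism ($\#Q'=\#\mathcal{F}+1=\#Q+1$), we have $f=h\circ f_1$, and $\#P-\#Q'=n-1$, so we recurse on $f_1$.

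The main obstacle is the key claim, and within it condition~\ref{itm:condpartstar} for $\mathcal{F}_1$. Conditions \ref{itm:condpartincomp} and \ref{itm:condpartantisym} are routine: $A_1,A_2\subseteq A$ are antichains, and a cycle $X\leq Y\leq X$ with $X\neq Y$ in $(\mathcal{F}_1,\leq)$ either already lives in $\mathcal{F}$, or forces $A_1\leq A_2$ (impossible since $A$ is an antichain and $A_1\cap A_2=\emptyset$), or forces $A\leq B\leq A$ for some block $B\neq A$ of $\mathcal{F}$ — all excluded. For condition~\ref{itm:condpartstar} the only delicate configuration is a block $X\neq A$ with $X\leq A_i$ but some element of $X$ lying below no element of $A_i$; but $X\leq A_i$ gives $X\leq A$ in $\mathcal{F}$, and with $X\neq A$ this forces $X<A$ in $(\mathcal{F},\leq)$, so by minimality of $A$ the block $X$ has size $1$ and the condition holds vacuously — this is exactly where the choice of $A$ is used. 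Every other instance of \ref{itm:condpartstar} for $\mathcal{F}_1$ (and for the partition realizing $h$) follows directly from \ref{itm:condpartstar} for $\mathcal{F}$, applied either to an unchanged block or to $A$ together with $A_i\subseteq A$. Assembling the desired chain $Q_0,\dots,Q_n$ from the one produced by the induction hypothesis plus the single peeled-off morphism is then just bookkeeping.
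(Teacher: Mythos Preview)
Your argument is correct and follows essentially the same route as the paper: both reduce to Proposition~\ref{proppartition}, handle the non-total case by deleting one point outside the domain, and handle the total case by refining the fiber partition at a suitably minimal non-singleton block and factoring through the refined quotient. The only cosmetic differences are that the paper first splits $f=f''\circ f'$ into a total and a purely partial part before inducting, and in the total step it chooses an \emph{element} $x$ minimal in $P$ among points in non-singleton fibers and splits its block as $\{x\}\sqcup(G\setminus\{x\})$, whereas you choose a \emph{block} $A$ minimal in $(\mathcal{F},\leq)$ and split it arbitrarily; your block-minimality hypothesis is exactly what makes the arbitrary split work in the verification of condition~\ref{itm:condpartstar}.
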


\begin{proof}
Let $R = \text{dom } f$, we can decompose $f=f'' \circ f'$ where $f'':R \rightarrow Q$ is just the restriction of $f$ on its domain and $f':P \rightarrow R$ is the partial morphism with domain $R$ that acts as the identity on $R$.\\
The morphism $f'':R \rightarrow Q$ is a total morphism\footnote{Since it is a total map, its dual preserves the maximum downset and intersections of downsets. Therefore it is dual to a co-Heyting algebras morphism.}, we prove by induction on $\# R-\#Q$ that it can be decomposed in a chain of minimal surjective $\mathbf{P}$-morphisms.\\
Suppose $\# R-\#Q>1$ and let us consider the partition $\mathcal{F}$ of $R$ given by the fibers of $f''$. Let $x \in P$ be minimal among the elements of $R$ that are not in a singleton of $\mathcal{F}$. Let $G$ be the element of $\mathcal{F}$ containing $x$, then $\#G > 1$ and all the elements of $R$ inside $G$ are incomparable to each other.\\
Let $Q_{n-1}$ be the quotient of $R$ defined by the refining of $\mathcal{F}$ in which $G$ is substituted by $\lbrace x \rbrace$ and $G \backslash \lbrace x \rbrace$, we name this new partition $\mathcal{F}'$.\\
The projection onto the quotient $\pi:R \rightarrow Q_{n-1}$ is a $\mathbf{P}$-morphism because $\mathcal{F}'$ satisfies the conditions \ref{itm:condpartantisym}, \ref{itm:condpartstar} and \ref{itm:condpartincomp} of Proposition \ref{proppartition}. Indeed, it satisfies \ref{itm:condpartantisym} and \ref{itm:condpartincomp} because $\mathcal{F}$ satisfies them and the elements in $G$ are incomparable. To show that \ref{itm:condpartstar} holds it is sufficient to show that for the pairs of sets in $\mathcal{F}'$ in which exactly one of the two is $\lbrace x \rbrace$ or $G \backslash \lbrace x \rbrace$ because $\mathcal{F}$ satisfies \ref{itm:condpartstar} and $\lbrace x \rbrace$ and $G \backslash \lbrace x \rbrace$ are incomparable.\\
Let $A \in \mathcal{F}$ be different from $\lbrace x \rbrace$ and $G \backslash \lbrace x \rbrace$.\\
If $\lbrace x \rbrace \leq A$ then \ref{itm:condpartstar} holds because $\lbrace x \rbrace$ is a singleton.\\
If $A \leq \lbrace x \rbrace$ then there exists $a \in A$ such that $a \leq x$, hence $A$ is a singleton by minimality of $x$, therefore \ref{itm:condpartstar} holds.\\
If $G \backslash \lbrace x \rbrace \leq A$ then we have that $G \leq A$, thus for any $y \in G \backslash \lbrace x \rbrace$ there exists $y' \geq y$ such that $y' \in A$.\\
If $A \leq G \backslash \lbrace x \rbrace$ it is $A \leq G$ thus for any $y \in A$ there exists $y' \geq y$ such that $y'=x$ or $y' \in G \backslash \lbrace x \rbrace$. Suppose there exists $y \in A$ such that there is no $y' \geq y$ such that $y' \in G \backslash \lbrace x \rbrace$: then $x \geq y$, by minimality of $x$ it has to be $A= \lbrace y \rbrace$ then $A \nleq G \backslash \lbrace x \rbrace$, this is absurd.\\
Therefore $\pi:R \rightarrow Q_{n-1}$ is a surjective total $\mathbf{P}$-morphism and we can apply the inductive hypothesis on $\pi$.\\
Then it suffices to show that the order-preserving map $f_n:Q_{n-1} \rightarrow Q$ induced by $f''$ is a $\mathbf{P}$-morphism, because in that case it is obviously surjective and minimal. But this is easy to show because the fibers of $f_n$ are all singletons except one and because $f''$ is a $\mathbf{P}$-morphism.\\
It remains to decompose $f'$, to do that just enumerate the elements of $P \setminus R= \lbrace p_1, \ldots, p_k \rbrace$ and let $f_1 ' :R \cup \lbrace p_1 \rbrace \rightarrow R$ be the partial morphism with domain $R$ that acts as the identity on $R$. Then construct $f_2 ' : R \cup \lbrace p_1,p_2 \rbrace \rightarrow R \cup \lbrace p_1 \rbrace $ in the same way and so on until $p_k$.
\end{proof}

\begin{definition}
We say that a proper extension $L_0 \subseteq L$ of finite CBSes is minimal if there is no intermediate proper extension $L_0 \subsetneq L_1 \subsetneq L$.
\end{definition}

\begin{proposition}
An extension $L_0 \subseteq L$ of finite CBSes is minimal iff the surjective $\mathbf{P}$-morphism that is dual to the inclusion is minimal.
\end{proposition}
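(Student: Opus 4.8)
The plan is to push the statement through the finite duality of Theorem~\ref{th:dualitybs} and reduce it to cardinality bookkeeping. Write $i\colon L_0\hookrightarrow L$ for the inclusion, let $P$ be the poset dual to $L$ and $Q$ the one dual to $L_0$, so that $i$ corresponds to a surjective $\mathbf{P}$-morphism $f\colon P\to Q$ (surjectivity by the proposition relating injectivity of a CBS-morphism to surjectivity of its dual). Since the duality is a contravariant equivalence, an intermediate extension $L_0\subseteq L_1\subseteq L$ corresponds exactly to a factorization $P\xrightarrow{g}Q_1\xrightarrow{h}Q$ of $f$ through a finite poset $Q_1$ (dual to $L_1$) with $g,h$ surjective $\mathbf{P}$-morphisms; moreover $L_1\subsetneq L$ iff $g$ is not an isomorphism of $\mathbf{P}$, and $L_0\subsetneq L_1$ iff $h$ is not.

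The first real step is a counting lemma: a surjective $\mathbf{P}$-morphism $g\colon X\to Y$ of finite posets is an isomorphism iff $\#X=\#Y$. For this I would use Proposition~\ref{proppartition}: such a $g$ is, up to isomorphism, the projection of a partition $\mathcal{F}$ of a subset $X_0\subseteq X$, so $\#Y=\#\mathcal{F}\le\#X_0\le\#X$, with equality forcing $X_0=X$ and all blocks singletons; then $g$ is a total bijection, and conditions~\ref{itm:defpmorp1}--\ref{itm:defpmorp2} make it order-reflecting as well, hence an order isomorphism. The converse is immediate. In particular a non-isomorphic surjective $\mathbf{P}$-morphism strictly drops cardinality.

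With these in hand, the ($\Leftarrow$) direction is short: if the dual morphism is minimal, $\#P=\#Q+1$, then a proper intermediate $L_0\subsetneq L_1\subsetneq L$ would dualize to $P\xrightarrow{g}Q_1\xrightarrow{h}Q$ with $g,h$ both non-isomorphisms, giving $\#P>\#Q_1>\#Q$ and hence $\#P\ge\#Q+2$, a contradiction. For ($\Rightarrow$), assume $L_0\subseteq L$ is minimal; properness gives $n:=\#P-\#Q\ge1$, and if $n\ge2$ I would invoke Theorem~\ref{thchain} to write $f=f_n\circ\cdots\circ f_1$ as a chain of minimal surjective $\mathbf{P}$-morphisms, set $g=f_1$ and $h=f_n\circ\cdots\circ f_2$, note neither is an isomorphism by the counting lemma (since $\#Q_1=\#P-1$ lies strictly between $\#Q$ and $\#P$), and dualize $f=h\circ g$ to produce a genuine intermediate sub-CBS $L_0\subsetneq L_1\subsetneq L$, contradicting minimality; hence $n=1$.

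The main obstacle I anticipate is the counting lemma, and inside it the verification that a total bijective $\mathbf{P}$-morphism reflects the order (so that it is an isomorphism in $\mathbf{P}$, not merely a bijection) — together with the routine but slightly fussy bookkeeping of matching ``proper subalgebra'' with ``non-isomorphic dual morphism'' and keeping the identifications $\mathcal{D}(P)\cong L$ and $\mathcal{D}(Q)\cong L_0$ straight. Everything else is a direct application of the duality and of Theorem~\ref{thchain}.
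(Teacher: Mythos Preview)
Your proposal is correct and follows essentially the same route as the paper: both directions hinge on the cardinality observation that a surjective $\mathbf{P}$-morphism between finite posets of equal size is an isomorphism, and the ($\Rightarrow$) direction is reduced to Theorem~\ref{thchain}. The paper's version is terser---it argues the counting step inline rather than isolating it as a lemma---but the ingredients and logic are the same.
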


\begin{proof}
Let $f : P \rightarrow Q$ be a surjective $\mathbf{P}$-morphism with $\#P=\#Q+1$. And suppose there exist two surjective $\mathbf{P}$-morphisms $g_1 :P \rightarrow R$ and $ g_2 : R \rightarrow Q$ such that $f=g_2 \circ g_1$, being $g_1$ and $g_2$ surjective $\#R$ must be equal to $\#P$ or $\#Q$. In the former case the domain of $g_1$ must be all $P$ and the relative fiber partition could only be the one formed exclusively by singletons because of cardinality, in the latter case the same holds for $g_2$. So either $g_1$ or $g_2$ has to be an isomorphism of posets.\\
Hence if we have two consecutive extensions that form an inclusion whose dual is minimal, then the dual of one of the two extensions is an isomorphism and so the relative extension is the identity.\\
The other implication follows easily from Theorem \ref{thchain}.
\end{proof}

\begin{remark}
By Definition \ref{defminimalpmorph} it follows immediately that there are two different kinds of minimal surjective $\mathbf{P}$-morphisms between finite posets.\\
We call a minimal surjective $\mathbf{P}$-morphism \textit{of the first kind} when there is exactly one element outside its domain and thus the restriction of such map on its domain is bijective and therefore an isomorphism of posets (any bijective $\mathbf{P}$-morphism is an isomorphism of posets). Some of these maps are dual to co-Heyting algebras embeddings but some are not.\\
We call a minimal surjective $\mathbf{P}$-morphism \textit{of the second kind} when it is total, i.e. there are no elements outside its domain, and thus there is exactly a single fiber which is not a singleton and it contains exactly two elements. The maps of the second kind are dual to co-Heyting algebras embeddings.\\
Figures \ref{fig:M1} and \ref{fig:M2} show some examples of minimal surjective $\mathbf{P}$-morphisms and relative extensions of CBSes.\\
We call a finite minimal extension of CBSes either of the first or of the second kind if the corresponding minimal surjective $\mathbf{P}$-morphism is respectively of the first or of the second kind.\\
Therefore, a finite minimal extension of CBSes of the first kind preserves the join-irreducibility of all the join-irreducibles in the domain. Indeed, since the corresponding $\mathbf{P}$-morphism is an isomorphism when restricted on its domain, we have that the downset generated by the preimage of a principal downset is still principal.\\
A finite minimal extension of CBSes of the second kind preserves the join-irreducibility of all the join-irreducibles in the domain except one which becomes the join of the two new join-irreducible elements in the codomain. Indeed, the corresponding $\mathbf{P}$-morphism is total and all its fibers are singletons except one, this implies that the preimage of any principal downset is principal except for one whose preimage is a downset generated by two elements.
\end{remark}

\begin{figure}
\centering

\scalebox{0.6}{
\begin{tikzpicture}[line cap=round,line join=round,>=triangle 45,x=1.0cm,y=1.0cm]
\clip(-10.,-6.) rectangle (10.,1.5);
\fill[fill=black,fill opacity=0.10000000149011612] (-8.4,-0.9) -- (-8.4,-1.9) -- (-7.4,-1.9) -- (-7.4,-0.9) -- cycle;
\fill[fill=black,fill opacity=0.10000000149011612] (-9.404441813761878,-4.090006859162249) -- (-9.404441813761878,-5.090006859162249) -- (-6.404441813761878,-5.090006859162249) -- (-6.404441813761878,-4.090006859162249) -- cycle;
\draw (-6.02,-0.7)-- (-4.02,-0.7);
\draw (-4.52,-0.4)-- (-4.02,-0.7);
\draw (-4.52,-1.)-- (-4.02,-0.7);
\draw (4.228,-0.696)-- (6.228,-0.696);
\draw (5.728,-0.396)-- (6.228,-0.696);
\draw (5.728,-0.996)-- (6.228,-0.696);
\draw (7.73,0.8)-- (7.73,-0.7);
\draw (-3,-0.3) node[anchor=north west] {\begin{huge}
$\emptyset$
\end{huge}};
\draw (-8.3,-1) node[anchor=north west] {\begin{Large}
$\emptyset$
\end{Large}};
\draw (-8.4,-0.9)-- (-8.4,-1.9);
\draw (-8.4,-1.9)-- (-7.4,-1.9);
\draw (-7.4,-1.9)-- (-7.4,-0.9);
\draw (-7.4,-0.9)-- (-8.4,-0.9);
\draw (-5.624441813761879,-4.590006859162249)-- (-3.6244418137618757,-4.590006859162249);
\draw (-4.1244418137618775,-4.290006859162249)-- (-3.6244418137618757,-4.590006859162249);
\draw (-4.1244418137618775,-4.8900068591622485)-- (-3.6244418137618757,-4.590006859162249);
\draw (4.223558186238123,-4.586006859162248)-- (6.223558186238123,-4.586006859162248);
\draw (5.723558186238123,-4.286006859162249)-- (6.223558186238123,-4.586006859162248);
\draw (5.723558186238123,-4.886006859162249)-- (6.223558186238123,-4.586006859162248);
\draw (-9.404441813761878,-4.090006859162249)-- (-9.404441813761878,-5.090006859162249);
\draw (-9.404441813761878,-5.090006859162249)-- (-6.404441813761878,-5.090006859162249);
\draw (-6.404441813761878,-5.090006859162249)-- (-6.404441813761878,-4.090006859162249);
\draw (-6.404441813761878,-4.090006859162249)-- (-9.404441813761878,-4.090006859162249);
\draw (2.4755581862381244,-3.5900068591622487)-- (2.4755581862381244,-5.590006859162249);
\draw (7.046040380934557,-4.591386198152422)-- (8.046040380934558,-5.591386198152422);
\draw (7.046040380934557,-4.591386198152422)-- (8.046040380934558,-3.591386198152424);
\draw (8.046040380934558,-5.591386198152422)-- (9.046040380934556,-4.591386198152422);
\draw (8.046040380934558,-3.591386198152424)-- (9.046040380934556,-4.591386198152422);
\draw [rotate around={0.:(-7.904441813761868,-4.590006859162249)}] (-7.904441813761868,-4.590006859162249) ellipse (1.3080394581011854cm and 0.38531444814568816cm);
\begin{scriptsize}
\draw [fill=black] (2.48,-0.7) circle (2.0pt);
\draw [fill=black] (-7.9,-0.1) circle (2.0pt);
\draw [fill=black] (7.73,-0.7) circle (2.0pt);
\draw [fill=ffffff] (7.73,0.8) circle (2.0pt);
\draw [fill=black] (2.4755581862381244,-5.590006859162249) circle (2.0pt);
\draw [fill=black] (-8.904441813761878,-4.590006859162249) circle (2.0pt);
\draw [fill=black] (-6.904441813761878,-4.590006859162249) circle (2.0pt);
\draw [fill=black] (2.4755581862381244,-3.5900068591622487) circle (2.0pt);
\draw [fill=black] (8.046040380934558,-5.591386198152422) circle (2.0pt);
\draw [fill=black] (8.046040380934558,-3.591386198152424) circle (2.0pt);
\draw [fill=ffffff] (7.046040380934557,-4.591386198152422) circle (2.0pt);
\draw [fill=ffffff] (9.046040380934556,-4.591386198152422) circle (2.0pt);
\draw [fill=black] (-2.261696854106027,-4.586989587362458) circle (2.0pt);
\end{scriptsize}
\end{tikzpicture}
}

\caption{Simplest examples of minimal extensions and their duals; on the left we show the surjective $\mathbf{P}$-morphisms and on the right the corresponding minimal extensions of CBSes. The domain is denoted by a rectangle and the partition into fibers is represented by the encircled sets of points. The white points represents the elements outside the images of the inclusions. Notice that the inclusion on the top is not a co-Heyting algebras morphism.} \label{fig:M1}
\end{figure}
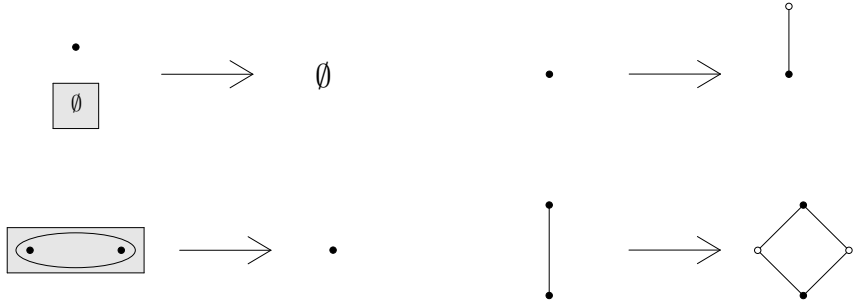

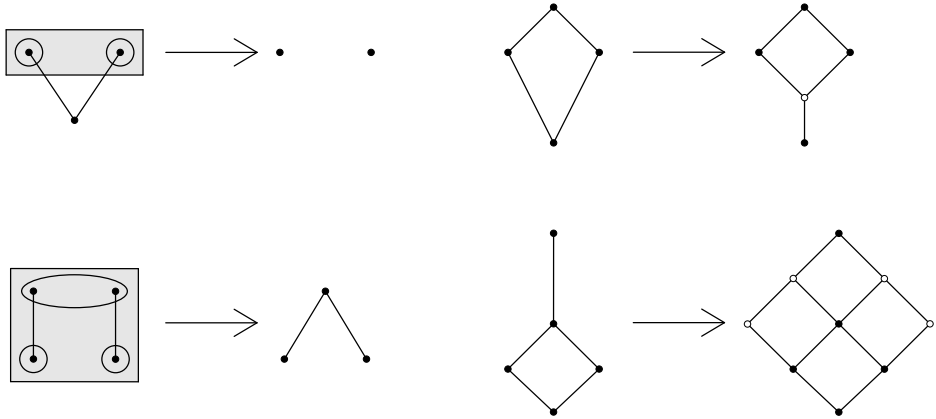
\begin{figure}
\centering

\scalebox{0.6}{
\begin{tikzpicture}[line cap=round,line join=round,>=triangle 45,x=1.0cm,y=1.0cm]
\clip(-10.,-7.5) rectangle (11.5,2.5);
\fill[line width=0.8pt,fill=black,fill opacity=0.10000000149011612] (-9.25,1.5) -- (-9.25,0.5) -- (-6.25,0.5) -- (-6.25,1.5) -- cycle;
\fill[line width=0.8pt,fill=black,fill opacity=0.10000000149011612] (-9.152008196161473,-6.276651905297075) -- (-6.351863437958693,-6.279893658890327) -- (-6.351863437958693,-3.7798936588903276) -- (-9.152008196161473,-3.7766519052970793) -- cycle;
\draw [line width=0.8pt] (-8.75,1.)-- (-7.75,-0.5);
\draw [line width=0.8pt] (-6.75,1.)-- (-7.75,-0.5);
\draw [line width=0.8pt] (-8.75,1.) circle (0.3cm);
\draw [line width=0.8pt] (-6.75,1.) circle (0.3cm);
\draw [line width=0.8pt] (-9.25,1.5)-- (-9.25,0.5);
\draw [line width=0.8pt] (-9.25,0.5)-- (-6.25,0.5);
\draw [line width=0.8pt] (-6.25,0.5)-- (-6.25,1.5);
\draw [line width=0.8pt] (-6.25,1.5)-- (-9.25,1.5);
\draw [line width=0.8pt] (-5.75,1.)-- (-3.75,1.);
\draw [line width=0.8pt] (-4.25,1.3)-- (-3.75,1.);
\draw [line width=0.8pt] (-4.25,0.7)-- (-3.75,1.);
\draw [line width=0.8pt] (1.75,1.)-- (2.75,-1.);
\draw [line width=0.8pt] (1.75,1.)-- (2.75,2.);
\draw [line width=0.8pt] (2.75,-1.)-- (3.75,1.);
\draw [line width=0.8pt] (2.75,2.)-- (3.75,1.);
\draw [line width=0.8pt] (7.25,1.)-- (8.25,0.);
\draw [line width=0.8pt] (7.25,1.)-- (8.25,2.);
\draw [line width=0.8pt] (8.25,0.)-- (9.25,1.);
\draw [line width=0.8pt] (8.25,2.)-- (9.25,1.);
\draw [line width=0.8pt] (4.498,1.004)-- (6.498,1.004);
\draw [line width=0.8pt] (5.998,1.304)-- (6.498,1.004);
\draw [line width=0.8pt] (5.998,0.704)-- (6.498,1.004);
\draw [line width=0.8pt] (8.25,0.)-- (8.25,-1.);
\draw [line width=0.8pt] (-5.7520081961614755,-4.976651905297078)-- (-3.7520081961614755,-4.976651905297078);
\draw [line width=0.8pt] (-4.2520081961614755,-4.676651905297077)-- (-3.7520081961614755,-4.976651905297078);
\draw [line width=0.8pt] (-4.2520081961614755,-5.276651905297077)-- (-3.7520081961614755,-4.976651905297078);
\draw [line width=0.8pt] (4.495991803838525,-4.972651905297078)-- (6.495991803838525,-4.972651905297078);
\draw [line width=0.8pt] (5.995991803838525,-4.672651905297077)-- (6.495991803838525,-4.972651905297078);
\draw [line width=0.8pt] (5.995991803838525,-5.272651905297077)-- (6.495991803838525,-4.972651905297078);
\draw [line width=0.8pt] (-8.652008196161473,-4.276651905297079)-- (-8.652008196161473,-5.776651905297075);
\draw [line width=0.8pt] (-8.652008196161473,-5.776651905297075) circle (0.3cm);
\draw [line width=0.8pt] (-6.851863437958693,-4.279893658890328)-- (-6.851863437958693,-5.779893658890327);
\draw [line width=0.8pt] (-6.851863437958693,-5.779893658890327) circle (0.3cm);
\draw [line width=0.8pt] (-9.152008196161473,-6.276651905297075)-- (-6.351863437958693,-6.279893658890327);
\draw [line width=0.8pt] (-6.351863437958693,-6.279893658890327)-- (-6.351863437958693,-3.7798936588903276);
\draw [line width=0.8pt] (-6.351863437958693,-3.7798936588903276)-- (-9.152008196161473,-3.7766519052970793);
\draw [line width=0.8pt] (-9.152008196161473,-3.7766519052970793)-- (-9.152008196161473,-6.276651905297075);
\draw [rotate around={-0.08442111093369144:(-7.75193581706012,-4.278272782093703)},line width=0.8pt] (-7.75193581706012,-4.278272782093703) ellipse (1.1586601617252432cm and 0.36377397359568864cm);
\draw [line width=0.8pt] (-3.1514222724638246,-5.778347451320107)-- (-2.2514222724638246,-4.278347451320109);
\draw [line width=0.8pt] (-2.2514222724638246,-4.278347451320109)-- (-1.3514222724638245,-5.778347451320107);
\draw [line width=0.8pt] (1.7485777275361774,-5.998347451320105)-- (2.7485777275361776,-6.9483474513201005);
\draw [line width=0.8pt] (1.7485777275361774,-5.998347451320105)-- (2.7485777275361776,-4.998347451320109);
\draw [line width=0.8pt] (2.7485777275361776,-6.9483474513201005)-- (3.7485777275361785,-5.998347451320105);
\draw [line width=0.8pt] (2.7485777275361776,-4.998347451320109)-- (3.7485777275361785,-5.998347451320105);
\draw [line width=0.8pt] (2.749871987268069,-2.9972259343359875)-- (2.7485777275361776,-4.998347451320109);
\draw [line width=0.8pt] (8.001826238432699,-6.00065959666554)-- (9.001826238432695,-6.950659596665533);
\draw [line width=0.8pt] (8.001826238432699,-6.00065959666554)-- (9.001826238432695,-5.000659596665549);
\draw [line width=0.8pt] (9.001826238432695,-6.950659596665533)-- (10.001826238432693,-6.00065959666554);
\draw [line width=0.8pt] (9.001826238432695,-5.000659596665549)-- (10.001826238432693,-6.00065959666554);
\draw [line width=0.8pt] (7.000233773781643,-5.0031178272561725)-- (8.001826238432699,-6.00065959666554);
\draw [line width=0.8pt] (10.001826238432693,-6.00065959666554)-- (11.000628684561091,-4.999369725865691);
\draw [line width=0.8pt] (7.000233773781643,-5.0031178272561725)-- (8.002798326505205,-3.9967444176376037);
\draw [line width=0.8pt] (8.002798326505205,-3.9967444176376037)-- (9.001826238432695,-5.000659596665549);
\draw [line width=0.8pt] (9.001826238432695,-5.000659596665549)-- (10.,-4.);
\draw [line width=0.8pt] (10.,-4.)-- (11.000628684561091,-4.999369725865691);
\draw [line width=0.8pt] (10.,-4.)-- (9.003120498164586,-2.9995380796814284);
\draw [line width=0.8pt] (8.002798326505205,-3.9967444176376037)-- (9.003120498164586,-2.9995380796814284);
\begin{scriptsize}
\draw [fill=black] (-8.75,1.) circle (2.0pt);
\draw [fill=black] (-6.75,1.) circle (2.0pt);
\draw [fill=black] (-7.75,-0.5) circle (2.0pt);
\draw [fill=black] (-3.25,1.) circle (2.0pt);
\draw [fill=black] (-1.25,1.) circle (2.0pt);
\draw [fill=black] (2.75,-1.) circle (2.0pt);
\draw [fill=black] (2.75,2.) circle (2.0pt);
\draw [fill=black] (1.75,1.) circle (2.0pt);
\draw [fill=black] (3.75,1.) circle (2.0pt);
\draw [fill=ffffff] (8.25,0.) circle (2.0pt);
\draw [fill=black] (8.25,2.) circle (2.0pt);
\draw [fill=black] (7.25,1.) circle (2.0pt);
\draw [fill=black] (9.25,1.) circle (2.0pt);
\draw [fill=black] (8.25,-1.) circle (2.0pt);
\draw [fill=black] (-8.652008196161473,-5.776651905297075) circle (2.0pt);
\draw [fill=black] (-8.652008196161473,-4.276651905297079) circle (2.0pt);
\draw [fill=black] (-6.851863437958693,-5.779893658890327) circle (2.0pt);
\draw [fill=black] (-6.851863437958693,-4.279893658890328) circle (2.0pt);
\draw [fill=black] (-2.2514222724638246,-4.278347451320109) circle (2.0pt);
\draw [fill=black] (-3.1514222724638246,-5.778347451320107) circle (2.0pt);
\draw [fill=black] (-1.3514222724638245,-5.778347451320107) circle (2.0pt);
\draw [fill=black] (2.7485777275361776,-6.9483474513201005) circle (2.0pt);
\draw [fill=black] (2.7485777275361776,-4.998347451320109) circle (2.0pt);
\draw [fill=black] (1.7485777275361774,-5.998347451320105) circle (2.0pt);
\draw [fill=black] (3.7485777275361785,-5.998347451320105) circle (2.0pt);
\draw [fill=black] (2.749871987268069,-2.9972259343359875) circle (2.0pt);
\draw [fill=black] (9.001826238432695,-6.950659596665533) circle (2.0pt);
\draw [fill=black] (9.001826238432695,-5.000659596665549) circle (2.0pt);
\draw [fill=black] (8.001826238432699,-6.00065959666554) circle (2.0pt);
\draw [fill=black] (10.001826238432693,-6.00065959666554) circle (2.0pt);
\draw [fill=black] (9.003120498164586,-2.9995380796814284) circle (2.0pt);
\draw [fill=ffffff] (7.000233773781643,-5.0031178272561725) circle (2.0pt);
\draw [fill=ffffff] (11.000628684561091,-4.999369725865691) circle (2.0pt);
\draw [fill=ffffff] (10.,-4.) circle (2.0pt);
\draw [fill=ffffff] (8.002798326505205,-3.9967444176376037) circle (2.0pt);
\end{scriptsize}
\end{tikzpicture}
}

\caption{More complex examples of minimal extensions and their duals.} \label{fig:M2}
\end{figure}

It turns out that we can characterize the finite minimal extensions of CBSes by means of their generators.

\begin{definition} \label{defprim1}
Let $L_0$ be a finite CBS and $L$ an extension of $L_0$. We call an element $x \in L$ \textit{primitive of the first kind} over $L_0$ if the following conditions are satisfied:
\begin{enumerate}
\item $x \notin L_0$ \label{itm:prim1cond1}
\end{enumerate}
and for any $a$ join-irreducible of $L_0$:
\begin{enumerate}[resume]
\item $a-x \in L_0$, \label{itm:prim1cond2}
\item $x-a = x \text{ or } x-a = 0$. \label{itm:prim1cond3}
\end{enumerate}
\end{definition}

\begin{theorem} \label{theoext1}
Let $L_0$ be a finite CBS and $L$ an extension of $L_0$.\footnote{Notice that we do not require $L$ to be a finite CBS.}
If $x \in L$ is primitive of the first kind over $L_0$ then the sub-CBS $L_0 \langle x \rangle$ of $L$ generated by $x$ over $L_0$ is a finite minimal extension of $L_0$ of the first kind.
\end{theorem}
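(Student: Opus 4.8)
The plan is to first obtain an explicit description of the generated sub-CBS $L_0\langle x\rangle$, and then to extract the required properties from it through finite duality. As a preliminary step I would upgrade conditions~\ref{itm:prim1cond2} and~\ref{itm:prim1cond3} from join-irreducibles to arbitrary elements of $L_0$: writing an element $b\in L_0$ as the join of its join-irreducible components and using the identities $(b_1\vee\cdots\vee b_n)-c=(b_1-c)\vee\cdots\vee(b_n-c)$ and $a-(b_1\vee\cdots\vee b_n)=((a-b_1)-\cdots)-b_n$ from Remark~\ref{listsimplefacts}, one obtains $b-x\in L_0$ and $x-b\in\{x,0\}$ for every $b\in L_0$. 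Put $M:=L_0\cup\{\,b\vee x\mid b\in L_0\,\}$. Then $M$ is closed under $0$ and $\vee$ for trivial reasons, and it is closed under $-$ by inspecting the four cases coming from the two possible shapes of the two arguments: $(a\vee x)-b=(a-b)\vee(x-b)$ stays in $M$ because $x-b\in\{x,0\}$; $a-(b\vee x)=(a-b)-x\in L_0$; and $(a\vee x)-(b\vee x)=((a\vee x)-b)-x$, which in each subcase reduces to $(a-b)-x\in L_0$. Hence $M$ is a sub-CBS of $L$; since $L_0\subseteq M$, $x=0\vee x\in M$, and every element of $M$ obviously lies in $L_0\langle x\rangle$, we get $L_0\langle x\rangle=M$, which is finite because $L_0$ is.

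The next step is to determine the join-irreducibles of $M$. First, $x$ is join-irreducible in $M$: it is nonzero by~\ref{itm:prim1cond1}, and for every $y\in M$ one has $x-y\in\{x,0\}$ — by the upgraded~\ref{itm:prim1cond3} when $y\in L_0$, and because $x-(b\vee x)=(x-b)-x\in\{x,0\}$ when $y=b\vee x$ — so the equivalence~\ref{itm:propjoinirr1}$\Leftrightarrow$\ref{itm:propjoinirr5} applies; moreover $x\notin L_0$ by~\ref{itm:prim1cond1}. Second, every join-irreducible $g$ of $L_0$ remains join-irreducible in $M$. Since $g$ is join-irreducible in $L_0$ we have $g-c\in\{g,0\}$ for all $c\in L_0$; using this one checks that, if $x\le g$, then $c\vee x=g$ holds iff $c=g$ (for $c\le g$), while if $x\nleq g$ no element $b\vee x$ with $b\in L_0$ lies below $g$. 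Hence the elements of $M$ strictly below $g$ are the $c\in L_0$ with $c<g$ together with, when $x\le g$, the elements $c\vee x$ with $c<g$; their join is $g^-$ if $x\nleq g$ and $g^-\vee x$ if $x\le g$, and in both cases it is strictly below $g$ — in the second case because $g\le g^-\vee x$ would give $g-g^-\le x$, i.e. $g\le x$ (as $g-g^-=g$), whence $g=x$, contradicting $x\notin L_0$. So $g$ has a unique predecessor in $M$ and is join-irreducible there. Finally, a join-irreducible of $M$ lying in $L_0$ is join-irreducible in $L_0$ (a nontrivial join decomposition in $L_0$ is one in $M$), and a join-irreducible of $M$ of the form $b\vee x$ with $b\in L_0$ must, by~\ref{itm:propjoinirr4}, equal $b$ or $x$, hence equals $x$ unless it already lies in $L_0$. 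Altogether $J(M)=J(L_0)\sqcup\{x\}$.

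From $\#J\bigl(L_0\langle x\rangle\bigr)=\#J(L_0)+1$ it follows that the surjective $\mathbf P$-morphism dual to the inclusion $L_0\hookrightarrow L_0\langle x\rangle$ is minimal, so by the proposition identifying minimal extensions of finite CBSes with minimal dual $\mathbf P$-morphisms the extension $L_0\subseteq L_0\langle x\rangle$ is minimal (and proper, since $x\notin L_0$). It cannot be of the second kind: in a minimal extension of the second kind exactly one join-irreducible of the subalgebra fails to stay join-irreducible — it splits as the join of the two new join-irreducibles — whereas here every join-irreducible of $L_0$ persists. Therefore $L_0\subseteq L_0\langle x\rangle$ is a minimal extension of the first kind.

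The step I expect to be the main obstacle is the verification, in the middle paragraph, that each join-irreducible $g$ of $L_0$ stays join-irreducible in $M$, in particular the case $x\le g$, where its predecessor jumps from $g^-$ to $g^-\vee x$ and one must rule out $g^-\vee x=g$; this is where the join-irreducibility of $g$ in $L_0$ and the hypothesis $x\notin L_0$ really come into play. Everything else — the description $L_0\langle x\rangle=L_0\cup\{b\vee x\}$ and the join-irreducibility of $x$ — is routine once~\ref{itm:prim1cond2} and~\ref{itm:prim1cond3} have been extended to all of $L_0$.
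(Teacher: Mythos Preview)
Your proof is correct and follows essentially the same route as the paper: upgrade conditions~\ref{itm:prim1cond2} and~\ref{itm:prim1cond3} to all of $L_0$ (this is the paper's Lemma~\ref{lemmaprim1}), identify $L_0\langle x\rangle$ with $L_0\cup\{b\vee x\mid b\in L_0\}$, show the join-irreducibles of this set are exactly $J(L_0)\cup\{x\}$, and read off minimality of the first kind. The only tactical difference is in the verification that a join-irreducible $g$ of $L_0$ stays join-irreducible: you compute the predecessor of $g$ in $M$ and rule out $g^-\vee x=g$ via $g-g^-=g$, whereas the paper uses the identity $g=(g-x)\vee(g-(g-x))$ together with $g-x\in L_0$ to show directly that $g\leq a\vee x$ forces $g\leq a$ or $g\leq x$; both arguments hinge on the same facts and are of comparable length.
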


Before proving Theorem \ref{theoext1} we need the following lemma:

\begin{lemma} \label{lemmaprim1}
Let $L_0$ be a finite CBS, $L$ an extension of $L_0$ and $x \in L$ primitive of the first kind over $L_0$, then the two following properties hold:
\begin{enumerate}[label=(\roman*)] 
\item $\forall a \in L_0 \quad a-x \in L_0$, \label{itm:prim1condi}
\item $\forall a \in L_0 \quad x-a = x \text{ or } x-a = 0$. \label{itm:prim1condii}
\end{enumerate}
\end{lemma}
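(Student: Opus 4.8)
The plan is to prove the two statements by reducing arbitrary $a \in L_0$ to join-irreducible components, using the fact (from the list in Remark~\ref{listsimplefacts}) that in a finite CBS every element is the join of its join-irreducible components and that difference distributes over joins on the left: $(a_1 \vee \cdots \vee a_n) - b = (a_1 - b) \vee \cdots \vee (a_n - b)$. I would also use the dual distribution $a - (b_1 \vee \cdots \vee b_n) = ((a - b_1) - \cdots) - b_n$ and the fact that differences can be reordered, $(a-b)-c = (a-c)-b$.

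\textbf{Proof of (i).} Let $a \in L_0$. Since $L_0$ is a finite CBS, write $a = g_1 \vee \cdots \vee g_n$ where $g_1, \ldots, g_n$ are the join-irreducible components of $a$ in $L_0$. Then
\[
a - x = (g_1 \vee \cdots \vee g_n) - x = (g_1 - x) \vee \cdots \vee (g_n - x).
\]
By condition~\ref{itm:prim1cond2} of Definition~\ref{defprim1}, each $g_i - x \in L_0$ (each $g_i$ is join-irreducible in $L_0$). Since $L_0$ is closed under joins, the join on the right-hand side lies in $L_0$, hence $a - x \in L_0$. This proves~\ref{itm:prim1condi}.

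\textbf{Proof of (ii).} Let $a \in L_0$ and again write $a = g_1 \vee \cdots \vee g_n$ with the $g_i$ join-irreducible in $L_0$. Using the dual distributivity of difference over joins in the \emph{subtrahend}: since $g$ is join-irreducible in $L$ as well would be too strong, so instead I argue directly on $x$. We have, for each $i$, that $x - g_i = x$ or $x - g_i = 0$ by condition~\ref{itm:prim1cond3}. Now compute $x - a = x - (g_1 \vee \cdots \vee g_n) = ((x - g_1) - g_2) - \cdots - g_n$. If $x - g_i = 0$ for some $i$, then since $0 - b = 0$ for all $b$, and the differences can be performed in any order (by $(a-b)-c = (a-c)-b$), we get $x - a = 0$. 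Otherwise $x - g_i = x$ for every $i$; performing the iterated difference one factor at a time gives $x - g_1 = x$, then $(x - g_1) - g_2 = x - g_2 = x$, and inductively $x - a = x$. In either case $x - a = x$ or $x - a = 0$, proving~\ref{itm:prim1condii}.

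\textbf{Remark on the main point.} The only subtlety is that in~\ref{itm:prim1condii} one must be careful that the iterated-difference formula is applied with the $g_i$ being the join-irreducible components of $a$ \emph{in $L_0$}, so that Definition~\ref{defprim1}\ref{itm:prim1cond3} applies to each of them; the reordering identity $(a-b)-c=(a-c)-b$ then makes the case analysis clean. No further obstacle arises: both parts follow purely from the arithmetic of finite CBSes together with the defining conditions on primitive elements of the first kind.
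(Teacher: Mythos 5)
Your proof is correct and follows essentially the same route as the paper's: decompose $a$ into its join-irreducible components in $L_0$, distribute the difference over the join for~(i), and for~(ii) write $x-a$ as an iterated difference and do the case split on whether some $x-g_i$ equals $0$ (the paper picks the smallest such index, you invoke the reordering identity $(a-b)-c=(a-c)-b$ — an immaterial difference). The only blemish is the garbled aside about ``$g$ being join-irreducible in $L$'' in part~(ii), which refers to nothing in this lemma and should simply be deleted; the mathematics is unaffected.
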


\begin{proof}
Let $a \in L_0$ and $a_1, \ldots, a_n$ be its join-irreducible components in $L_0$, since $L_0$ is finite we have $a=a_1 \vee \cdots \vee a_n$. To prove \ref{itm:prim1condi} observe that
\[
a-x=(a_1-x) \vee \cdots \vee (a_n-x)
\]
which is an element of $L_0$ because it is join of elements of $L_0$ as a consequence of \ref{itm:prim1cond2} of Definition \ref{defprim1}.\\
Furthermore to prove \ref{itm:prim1condii} notice that
\[
x-a=x-(a_1 \vee \cdots \vee a_n)=((x-a_1)- \cdots )-a_n
\]
and that \ref{itm:prim1cond3} of Definition \ref{defprim1} implies that there are two possibilities: $x-a_i=x$ for any $i=1, \ldots,n$ or $x-a_i=0$ for some $i$. In the former case we have $x-a=x$, in the latter suppose that $i$ is the smallest index such that  $x-a_i=0$ then 
\[
x-a=((x-a_i)- \cdots )-a_n=(0- \cdots )-a_n=0.
\]
\end{proof}

\begin{proof}[Proof of Theorem \ref{theoext1}]
Let $L'$ be the sub $\vee$-semilattice of $L$ generated by $x$ over $L_0$, we show that $L'$ actually coincides with $L_0 \langle x \rangle$.\\
$L'$ is clearly finite, its elements are the elements of $L_0$ and the elements of the form $a \vee x$ with $a \in L_0$. It follows from \ref{itm:prim1condi} and \ref{itm:prim1condii} of Lemma \ref{lemmaprim1} that if 
 $a,b,c,d \in L_0 \cup \lbrace x \rbrace $ then $(a \vee b) -(c \vee d) = (a-(c \vee d) ) \vee (b-(c \vee d) )= ( (a-c )-d) \vee ((b-c)-d)$ belong to $L'$. Therefore $L'=L_0 \langle x \rangle$.\\
We want to show that the join-irreducibles of $L_0 \langle x \rangle$ are exactly the join-irreducibles of $L_0$ and $x$.\\
$x$ is a join-irreducible element of $L_0 \langle x \rangle$, indeed $x \neq 0$ since by hypothesis $x \notin L_0$ and suppose that $x \leq a \vee b$ with $a,b \in L_0 \langle x \rangle$ and $a,b \ngeq x$; therefore $a$ and $b$ must be elements of $L_0$ because they cannot be of the form $c \vee x$ with $c \in L_0$. It follows from \ref{itm:prim1condii} of Lemma \ref{lemmaprim1} and $a,b \ngeq x$ that $x-a = x-b = x$ and so $0=x-(a \vee b )=  ( x-b)-a= x-a = x$, this is absurd because $x \neq 0$.\\
The join-irreducible elements of $L_0$ are still join-irreducible in $L_0 \langle x \rangle$. It is sufficient to show that for any $g$ join-irreducible in $L_0$ if $g \leq a \vee x $ with $a \in L_0$ then $g \leq a$ or $g \leq x$. Notice that being $L$ a CBS it is $g= (g-x ) \vee ( g-(g-x))$ (see Remark \ref{listsimplefacts}), we also have by \ref{itm:prim1cond2} of Definition \ref{defprim1} that $g-x$ and $ g-(g-x)$ are in $L_0$. Then being $g$ join-irreducible in $L_0$ we get $g=g-x$ or $g =   g-(g-x)$. In the latter case $g-x= g-(g-(g-x))= g-g = 0$ so $g \leq x$. In the former case $0=g- (a \vee x )  = (g-x)-a= g-a$ so $g \leq a$.\\
Clearly if an element of the form $x \vee a$ with $a \in L_0$ is different from $a$ and $x$ it cannot be join-irreducible in $L_0 \langle x \rangle$. Also if an element of $L_0$ is not join-irreducible in $L_0$ it cannot be join-irreducible in $L_0 \langle x \rangle$. Hence the join-irreducible elements of $L_0 \langle x \rangle$ are exactly the join-irreducible elements of $L_0$ and $x$.\\
Therefore the extension $L_0 \hookrightarrow L_0 \langle x \rangle$ is minimal since $L_0 \langle x \rangle$ contains exactly one join-irreducible element more than $L_0$. Notice that $L_0 \langle x \rangle$ is a minimal extension of $L_0$ of the first kind because the join-irreducibility of all the join-irreducibles of $L_0$ is preserved.
\end{proof}

\begin{definition} \label{defprim2}
Let $L_0$ be a finite CBS and $L$ an extension of $L_0$.\footnote{Again we do not require $L$ to be a finite CBS.}
We call a couple of elements $(x_1,x_2) \in L^2$ \textit{primitive of the second kind} over $L_0$ if the following conditions are satisfied:
\begin{enumerate}
\item $x_1,x_2 \notin L_0$ and $x_1 \neq x_2$ \label{itm:prim2cond1}
\end{enumerate}
and there exists $g$ join-irreducible element of $L_0$ such that:
\begin{enumerate}[resume]
\item $g-x_1=x_2$ and $g-x_2=x_1$, \label{itm:prim2cond2}
\item for any join-irreducible element $a$ of $L_0$ such that $a <g$ we have $a-x_i \in L_0$ for $i=1,2$. \label{itm:prim2cond3}
\end{enumerate}
\end{definition}

\begin{remark} \label{remarkprim2}
$g$ in Definition \ref{defprim2} is univocally determined by $(x_1,x_2)$ since $g= x_1 \vee x_2$.\\
Indeed, by property \ref{itm:prim2cond2} of Definition \ref{defprim2} we have $x_1 \leq g$, $x_2 \leq g$ and also $g-(x_1 \vee x_2)=(g-x_1)-x_2=x_2-x_2=0$ that implies $g \leq x_1 \vee x_2$.
\end{remark}

\begin{theorem} \label{theoext2}
Let $L_0$ be a finite CBS and $L$ an extension of $L_0$.
If $(x_1,x_2) \in L^2$ is primitive of the second kind over $L_0$ then the sub-CBS $L_0 \langle x_1, x_2 \rangle$ of $L$ generated by $\lbrace x_1,x_2 \rbrace$ over $L_0$ is a finite minimal extension of $L_0$ of the second kind.
\end{theorem}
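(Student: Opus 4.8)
The plan is to follow closely the proof of Theorem~\ref{theoext1}. First I would show that $L_0\langle x_1,x_2\rangle$ coincides with the sub-$\vee$-semilattice $L'$ of $L$ generated by $\{x_1,x_2\}$ over $L_0$. Since $x_1\vee x_2=g\in L_0$ by Remark~\ref{remarkprim2}, the elements of $L'$ are precisely the elements of $L_0$ together with those of the form $a\vee x_1$ and $a\vee x_2$ for $a\in L_0$; in particular $L'$ is finite and $L_0\subsetneq L'$. To prove that $L'$ is closed under $-$ I would first record the analogue of Lemma~\ref{lemmaprim1}: for every $c\in L_0$ one has $x_i-c\in\{0,x_i\}$ ($i=1,2$), one has $x_1-x_2=x_1$ and $x_2-x_1=x_2$, and one has $c-x_i\in L'$. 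The statements about $x_i-c$ and $x_i-x_j$ follow from $x_i=g-x_{3-i}$ (Remark~\ref{remarkprim2}), the identities $(a-b)-b=a-b$ and $(a-b)-c=(a-c)-b$ of Remark~\ref{listsimplefacts}, and the fact that $g-c\in\{0,g\}$ because $g$ is join-irreducible in $L_0$ (item~\ref{itm:propjoinirr5} of the Remark on join-irreducibility). For $c-x_i$ I would decompose $c$ into its join-irreducible components $h_1,\dots,h_n$ in $L_0$, so that $c-x_i=\bigvee_j(h_j-x_i)$: a component with $h_j\not\leq g$ gives $h_j-x_i=h_j\in L_0$ (since $h_j-g=h_j$ and, as $x_i\leq g$, $h_j-g\leq h_j-x_i\leq h_j$), a component with $h_j<g$ gives $h_j-x_i\in L_0$ by condition~\ref{itm:prim2cond3} of Definition~\ref{defprim2}, and the component $h_j=g$, which occurs at most once, gives $h_j-x_i=x_{3-i}$ by condition~\ref{itm:prim2cond2}. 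Hence $c-x_i$ is either an element of $L_0$ or of the form $(\text{element of }L_0)\vee x_{3-i}$, so it lies in $L'$. With these facts and the distributivity of $-$ over $\vee$ one verifies routinely that $L'$ is closed under $-$, whence $L'=L_0\langle x_1,x_2\rangle$.

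Next I would determine the join-irreducibles of $L_0\langle x_1,x_2\rangle$. The elements $x_1,x_2$ are join-irreducible: they are nonzero (they are not in $L_0$), and by item~\ref{itm:propjoinirr5} it is enough to check $x_i-u\in\{0,x_i\}$ for all $u\in L'$, which by $x_i-(b\vee x_j)=(x_i-b)-x_j$ reduces to the facts collected above. The element $g$ is no longer join-irreducible, since $g=x_1\vee x_2$ with $x_1,x_2<g$ (indeed $x_i\leq g$ and $x_i\neq g$ because $g-x_i=x_{3-i}\neq 0$). Every join-irreducible $h\neq g$ of $L_0$ stays join-irreducible in $L_0\langle x_1,x_2\rangle$; the key point is that if $h$ is join-irreducible in $L_0$ and $h-x_i\in L_0$, then automatically $h-x_i\in\{0,h\}$, because $h=(h-x_i)\vee(h-(h-x_i))$ with both joinands in $L_0$, so join-irreducibility of $h$ in $L_0$ forces $h-x_i=h$ or $h-(h-x_i)=h$, and in the latter case $h-x_i=h-(h-(h-x_i))=h-h=0$; since the computation above shows $h-x_i\in L_0$ for every join-irreducible $h\neq g$ of $L_0$, we get $h-x_i\in\{0,h\}$, and then $h-u\in\{0,h\}$ for all $u\in L'$ by the same reduction as for $x_i$. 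Finally, any element $a\vee x_i$ that is neither in $L_0$ nor equal to $x_1$ or $x_2$ satisfies $a<a\vee x_i$ and $x_i<a\vee x_i$, so it is a join of two strictly smaller elements and hence not join-irreducible. Therefore the join-irreducibles of $L_0\langle x_1,x_2\rangle$ are exactly those of $L_0$ with $g$ removed and $x_1,x_2$ added, so $L_0\langle x_1,x_2\rangle$ has exactly one join-irreducible element more than $L_0$.

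It then follows by finite duality (and the characterisation of minimal extensions in terms of minimal surjective $\mathbf{P}$-morphisms) that the proper extension $L_0\subseteq L_0\langle x_1,x_2\rangle$ is minimal. Moreover it is of the second kind: the join-irreducibility of $g$ is destroyed, so the extension is not of the first kind, which preserves the join-irreducibility of every join-irreducible of the domain; all other join-irreducibles of $L_0$ survive; and $g=x_1\vee x_2$ becomes the join of the two new join-irreducibles $x_1,x_2$, which is exactly the behaviour described in the classification of minimal extensions of the second kind.

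I expect the main obstacle to lie in the first paragraph, namely in establishing the analogue of Lemma~\ref{lemmaprim1} and in particular in pinning down $c-x_i$ for an arbitrary $c\in L_0$ through the case analysis on its join-irreducible components; this is where conditions~\ref{itm:prim2cond2} and~\ref{itm:prim2cond3} of Definition~\ref{defprim2} and the join-irreducibility of $g$ in $L_0$ are all used, and it is somewhat more delicate than the first-kind case because $c-x_i$ may genuinely leave $L_0$, picking up the new generator $x_{3-i}$.
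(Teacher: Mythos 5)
Your proposal is correct and follows essentially the same route as the paper: you establish exactly the content of Lemma~\ref{lemmaprim2} (the case analysis on join-irreducible components of $c$, with conditions~\ref{itm:prim2cond2} and~\ref{itm:prim2cond3} and the join-irreducibility of $g$), use it to identify $L_0\langle x_1,x_2\rangle$ with the generated $\vee$-semilattice, and then show the join-irreducibles of the extension are $x_1,x_2$ together with those of $L_0$ other than $g$, concluding minimality and the second kind by the count of join-irreducibles. The only (harmless) local difference is that you verify preservation of join-irreducibility of each $h\neq g$ and of $x_1,x_2$ directly via criterion~\ref{itm:propjoinirr5}, using the decomposition $h=(h-x_i)\vee(h-(h-x_i))$ inside $L_0$ as in the proof of Theorem~\ref{theoext1}, whereas the paper argues through join-irreducible components of a putative counterexample $b$, showing $b=g$; both arguments are sound.
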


Before proving Theorem \ref{theoext2} we need the following lemma:

\begin{lemma} \label{lemmaprim2}
Let $L_0$ be a finite CBS, $L$ an extension of $L_0$ and $(x_1,x_2) \in L^2$ primitive of the second kind over $L_0$, then the two following properties hold:
\begin{enumerate}[label=(\roman*)] 
\item $\forall a \in L_0 \quad a-x_i \in L_0 \text{ or } a-x_i = b \vee x_j$ with $b \in L_0$ for $\lbrace i,j \rbrace = \lbrace 1,2 \rbrace$. \label{itm:prim2condi}
\item $\forall a \in L_0 \quad  x_i-a = x_i \text{ or } x_i-a = 0$ for $i=1,2$. \label{itm:prim2condii}
\end{enumerate}
\end{lemma}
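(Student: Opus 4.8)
The plan is to follow the proof of Lemma~\ref{lemmaprim1} closely: fix $a\in L_0$, write $a=a_1\vee\cdots\vee a_n$ as the join of its join-irreducible components in $L_0$ (possible since $L_0$ is finite), and reduce both statements to a one-step analysis of the expressions $x_i-a_l$ and $a_l-x_i$. Throughout I would use that $g=x_1\vee x_2$ by Remark~\ref{remarkprim2}, so that $x_1,x_2\leq g$, and that by condition~\ref{itm:prim2cond2} of Definition~\ref{defprim2} one has $x_i=g-x_j$ whenever $\{i,j\}=\{1,2\}$.

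For part~\ref{itm:prim2condii} I would first prove the one-step claim: $x_i-h\in\{0,x_i\}$ for every $h\in L_0$. Rewriting $x_i=g-x_j$ and applying the identity $(u-v)-w=(u-w)-v$ from Remark~\ref{listsimplefacts} gives $x_i-h=(g-h)-x_j$; since $g$ is join-irreducible in $L_0$, the equivalent condition~\ref{itm:propjoinirr5} yields $g-h\in\{0,g\}$, so $x_i-h$ equals either $g-x_j=x_i$ or $0-x_j=0$. Then, using $a-(b_1\vee\cdots\vee b_n)=((a-b_1)-\cdots)-b_n$, an immediate induction on the iterated difference $((x_i-a_1)-\cdots)-a_n$ (once a stage is $0$ it stays $0$; while it is $x_i$ the one-step claim applies) gives $x_i-a\in\{0,x_i\}$.

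For part~\ref{itm:prim2condi} I would expand $a-x_i=(a_1-x_i)\vee\cdots\vee(a_n-x_i)$ and classify each $a_l-x_i$ by the position of the join-irreducible $a_l$ relative to $g$. If $a_l<g$, then $a_l-x_i\in L_0$ is precisely condition~\ref{itm:prim2cond3}. If $a_l=g$, then $a_l-x_i=g-x_i=x_j$ with $\{i,j\}=\{1,2\}$. If $a_l\not\leq g$, then, since $a_l$ is join-irreducible, we have $a_l-g=a_l$ (by Remark~\ref{listsimplefacts}, or by condition~\ref{itm:propjoinirr5} together with $a_l\not\leq g$), and as $x_i\leq g$ and differences are antitone in the second argument, $a_l=a_l-g\leq a_l-x_i\leq a_l$, hence $a_l-x_i=a_l\in L_0$. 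Taking the join over $l$, at most one component can equal $g$, so either no term contributes an $x_j$ and $a-x_i\in L_0$, or exactly one does and $a-x_i=x_j\vee b$ with $b\in L_0$ the join of the remaining terms.

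The bookkeeping with joins and iterated differences is routine. The only step I expect to require a little care is the case $a_l\not\leq g$ in part~\ref{itm:prim2condi}: the key observation is that join-irreducibility of $a_l$ forces $a_l-g=a_l$, after which one simply sandwiches $a_l-x_i$ between $a_l-g$ and $a_l$. Everything else is essentially a transcription of the argument of Lemma~\ref{lemmaprim1}, now with the extra feature that subtracting $x_i$ from the component $g$ produces the new element $x_j$.
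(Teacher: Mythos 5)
Your proposal is correct and matches the paper's own argument: the same case split on the position of each join-irreducible component relative to $g$ (namely $a_l<g$ by condition~\ref{itm:prim2cond3}, $a_l=g$ giving the extra $x_j$, and $a_l\nleq g$ handled via $a_l-g=a_l$ and the sandwich $a_l=a_l-g\leq a_l-x_i\leq a_l$), and the same use of $x_i=g-x_j$ together with join-irreducibility of $g$ in $L_0$ for part~\ref{itm:prim2condii}. The only cosmetic difference is that your one-step claim in part~\ref{itm:prim2condii} already holds for arbitrary $h\in L_0$, so the subsequent induction over components is redundant (the paper instead splits directly into the cases $g\leq a$ and $g-a=g$).
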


\begin{proof}
To show \ref{itm:prim2condi} we first prove that if $a \neq g$ is join-irreducible in $L_0$, then $a-x_i \in L_0$. If $a<g$ this is covered by the hypothesis \ref{itm:prim2cond3} of Definition \ref{defprim2}. Now suppose that $a$ is a join-irreducible element of $L_0$ such that $a \nleq g$ then $a-g=a$ because $a$ is join-irreducible. Thus $a=a-g \leq a-x_i \leq a$ since $x_i \leq g$ (because $x_i=g-x_j \leq g$ with $i \neq j$) and thus $a-x_i=a \in L_0$ for $i=1,2$.
We now prove \ref{itm:prim2condi} for all $a \in L_0$.\\
Let $a \in L_0$ and $a_1, \ldots, a_n$ be its join-irreducible components in $L_0$, since $L_0$ is finite we have $a=a_1 \vee \cdots \vee a_n$. To prove \ref{itm:prim2condi} we consider two cases: $g$ is a join-irreducible component of $a$ or $g$ is not a join-irreducible component of $a$. In the former case, when $g$ is a join-irreducible component of $a$, suppose $a_1=g$, then
\[
a-x_i=(g-x_i) \vee \cdots \vee (a_n-x_i)=x_j \vee (a_2-x_i) \vee \cdots \vee (a_n-x_i)
\]
with $\lbrace i,j \rbrace = \lbrace 1,2 \rbrace$, notice that $(a_2-x_i) \vee \cdots \vee (a_n-x_i) \in L_0$ because it is join of elements of $L_0$ by what we have just proved. In the latter case, $g$ is not a join-irreducible component of $a$, we have 
\[
a-x=(a_1-x) \vee \cdots \vee (a_n-x)
\]
which is an element of $L_0$ because it is join of elements of $L_0$ as a consequence of what we have just proved.\\
Furthermore, to prove \ref{itm:prim2condii} notice that since $g$ is join-irreducible in $L_0$ we have that for any $a \in L_0$ there are two cases to consider: $g \leq a$ or $g-a=g$. In the former case we have, since $x_i \leq g$ by \ref{itm:prim2cond2} of Definition \ref{defprim2}, that $x_i-a=0$ for $i=1,2$ because $x_i \leq g \leq a$. In the latter case, since $g-a=g$, we have
\[
x_i-a=(g-x_j)-a=(g-a)-x_j=g-x_j=x_i
\]
for $\lbrace i,j \rbrace = \lbrace 1,2 \rbrace$.\\
\end{proof}

\begin{proof}[Proof of Theorem \ref{theoext2}]
Let $L'$ be the sub $\vee$-semilattice of $L$ generated by $\lbrace x_1,x_2 \rbrace$ over $L_0$.\\
As shown in Remark \ref{remarkprim2} we have $g= x_1 \vee x_2$. Also $x_2 - x_1 = x_2$ and $x_1 - x_2 = x_1$. Indeed $x_2 - x_1= (g - x_1)  -x_1 =  g-x_1= x_2$, the other case is symmetrical.\\
Hence by reasoning in a similar way as in the proof of Theorem \ref{theoext1}, using properties \ref{itm:prim2condi} and \ref{itm:prim2condii} of Lemma \ref{lemmaprim2}, we get that $L'=L_0 \langle x_1, x_2 \rangle$.\\
We now want to show that the join-irreducibles of $L_0 \langle x_1, x_2 \rangle$ are exactly $x_1,x_2$ and the join-irreducibles of $L_0$ different from $g$.\\
First, notice that if an element of $L_0$ is not join-irreducible in $L_0$ it cannot be join-irreducible in $L_0 \langle x_1, x_2 \rangle$. Furthermore, the only elements of $L_0 \langle x_1, x_2 \rangle$ not in $L_0$ that could be join-irreducible in $L_0 \langle x_1, x_2 \rangle$ are $x_1,x_2$ because $L_0 \langle x_1, x_2 \rangle$ is the $\vee$-semilattice generated by $\lbrace x_1,x_2 \rbrace$ over $L_0$.\\
We now show that $x_1,x_2$ are join-irreducible in $L_0 \langle x_1, x_2 \rangle$.\\
Suppose $x_1$ is not join-irreducible in $L_0 \langle x_1,x_2 \rangle$ and let $y_1, \ldots, y_r$ be its join-irreducible components. One of them must be $x_2$ because $x_1 \notin L_0$ and we observed that all the join-irreducible elements of $L_0 \langle x_1,x_2 \rangle$ are in $L_0 \cup \lbrace x_1,x_2 \rbrace$. But then $x_2 \leq x_1$ and therefore, by what was shown above, $0=x_2-x_1=x_2$ which is absurd because $x_2 \notin L_0$.
The same reasoning holds for the join-irreducibility of $x_1$.\\
It remains to show that the only element join-irreducible of $L_0$ which is not join-irreducible in $L_0 \langle x_1, x_2 \rangle$ is $g$.\\
Observe that $g$ is not join-irreducible in $L_0 \langle x_1, x_2 \rangle$ because $g=x_1 \vee x_2$ and $x_1,x_2 \neq g$ since $x_1,x_2 \notin L_0$.\\
Let $b \in L_0$ be join-irreducible in $L_0$ but not in $L_0 \langle x_1, x_2 \rangle$, let $y_1, \ldots, y_r$ be the join-irreducible components of $b$ in $L_0 \langle x_1, x_2 \rangle$. From what we observed above it follows that the $y_i$'s are in $L_0 \cup \lbrace x_1,x_2 \rbrace$ and since $b$ is join-irreducible in $L_0$ at least one of them is not in $L_0$. We can suppose $y_1=x_1$, so $x_1 \leq b$. This implies that $g \leq b$, indeed one among $y_2, \ldots, y_r$ has to be $x_2$ because otherwise $y_2 \vee \cdots \vee y_r \in L_0$ and being the $y_i$'s the join-irreducible components of $b$ we have that $x_1= b-(y_2 \vee \cdots \vee y_r ) $ must be in $L_0$, this is absurd. If $g < b$ then $b-g= b$ because $b$ is join-irreducible in $L_0$ but in this case $x_1=y_1 \leq b=b-g \leq b-x_1 = y_2 \vee \cdots \vee y_r$ and this is not possible because the $y_i$'s  are the join-irreducible components of $b$. This implies $b=g$.\\
Therefore the extension $L_0 \hookrightarrow L_0 \langle x_1,x_2 \rangle$ is minimal since the number of join-irreducibles of $L_0 \langle x_1,x_2 \rangle$ is greater by one than the number of the join-irreducibles of $L_0$.\\
Notice that $L_0 \langle x_1,x_2 \rangle$ is a minimal extension of $L_0$ of the second kind because the join-irreducibility of all but one of the join-irreducibles of $L_0$ is preserved.
\end{proof}

\begin{theorem} \label{theominimalprimitive}
Let $L_0$ be a finite CBS and $L$ a finite minimal extension of $L_0$, then $L$ is generated over $L_0$ either by a primitive element $x \in L$ of the first kind over $L_0$ or by $x_1,x_2 \in L$ forming a primitive couple $(x_1,x_2)$ of the second kind over $L_0$.
\end{theorem}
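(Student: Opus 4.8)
The plan is to pass to the dual side just long enough to classify the minimal extension, and then to exhibit the generator(s) and check the defining conditions directly, exploiting the elementary fact that in a finite CBS the difference of two join-irreducible elements is trivial. First, since the extension $L_0 \subseteq L$ is minimal, the dual surjective $\mathbf{P}$-morphism is minimal by the proposition characterising minimal extensions, hence of the first or of the second kind (Definition \ref{defminimalpmorph} and the remark following it); accordingly $L_0 \subseteq L$ is a minimal extension of the first or of the second kind, and $L$ has exactly one more join-irreducible than $L_0$. Reading off that remark: in the first-kind case every join-irreducible of $L_0$ remains join-irreducible in $L$, so there is a unique join-irreducible $x$ of $L$ outside $L_0$; in the second-kind case exactly one join-irreducible $g$ of $L_0$ fails to remain join-irreducible in $L$, it equals $x_1 \vee x_2$ where $x_1 \neq x_2$ are the two join-irreducibles of $L$ not in $L_0$, and every other join-irreducible of $L_0$ is still join-irreducible in $L$. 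Note also that an element $h$ which is join-irreducible in $L$ and lies in $L_0$ is join-irreducible in $L_0$ too, since a decomposition $h = b_1 \vee b_2$ in $L_0$ is one in $L$; in particular the elements $x$, $x_1$, $x_2$ above genuinely lie outside $L_0$.

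\emph{First kind.} I would show $x$ is primitive of the first kind over $L_0$. Condition \ref{itm:prim1cond1} has just been noted. If $a$ is join-irreducible of $L_0$ it is still join-irreducible in $L$, so by condition \ref{itm:propjoinirr5} of the remark on join-irreducibles we get $a - x \in \{0, a\} \subseteq L_0$ (condition \ref{itm:prim1cond2}) and $x - a \in \{0, x\}$ (condition \ref{itm:prim1cond3}). By Theorem \ref{theoext1}, $L_0\langle x\rangle$ is a finite minimal extension of $L_0$, and it is proper because $x \notin L_0$; since $L_0 \subsetneq L_0\langle x\rangle \subseteq L$ and $L_0 \subseteq L$ is minimal, $L_0\langle x\rangle = L$.

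\emph{Second kind.} I would show $(x_1,x_2)$ is primitive of the second kind over $L_0$, with witness $g$ (which is a join-irreducible of $L_0$). Condition \ref{itm:prim2cond1} has been noted. For \ref{itm:prim2cond2}, using $g = x_1 \vee x_2$ and the identity $(a \vee b) - c = (a - c) \vee (b - c)$ from Remark \ref{listsimplefacts} one gets $g - x_1 = (x_1 - x_1) \vee (x_2 - x_1) = x_2 - x_1$, which is $0$ or $x_2$ because $x_2$ is join-irreducible in $L$; it cannot be $0$, for $x_2 \leq x_1$ would force $g = x_1 \vee x_2 = x_1$, contradicting that $x_1$ is join-irreducible in $L$ while $g$ is not. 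Hence $g - x_1 = x_2$, and symmetrically $g - x_2 = x_1$. For \ref{itm:prim2cond3}, a join-irreducible $a < g$ of $L_0$ is different from $g$, hence still join-irreducible in $L$, so $a - x_i \in \{0, a\} \subseteq L_0$ for $i = 1, 2$. By Theorem \ref{theoext2}, $L_0\langle x_1, x_2\rangle$ is a proper finite minimal extension of $L_0$ contained in $L$, so minimality of $L_0 \subseteq L$ yields $L_0\langle x_1, x_2\rangle = L$.

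The only genuinely delicate point is the classification step: extracting from the remark on minimal $\mathbf{P}$-morphisms the precise behaviour of join-irreducibles — in particular, in the second-kind case, that the single join-irreducible of $L_0$ that is lost is exactly the join of the two new join-irreducibles of $L$. Once this is secured, both remaining cases are forced by the triviality of differences of join-irreducibles in a finite CBS together with Theorems \ref{theoext1} and \ref{theoext2}.
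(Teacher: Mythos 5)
Your proposal is correct and follows essentially the same route as the paper: both pass to the dual minimal $\mathbf{P}$-morphism, split into the first and second kind, and take the new join-irreducible(s) of $L$ as the primitive element, respectively the primitive couple. The differences are only cosmetic — the paper checks the primitivity conditions by explicit downset computations on the dual side, whereas you read the bookkeeping of join-irreducibles off the earlier remark on minimal extensions and verify the conditions algebraically via $a-x\in\{0,a\}$ and $x-a\in\{0,x\}$ for join-irreducibles, and you make explicit the generation step $L_0\langle x\rangle=L$ (resp.\ $L_0\langle x_1,x_2\rangle=L$) by minimality, which the paper leaves implicit.
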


\begin{proof}
Let $f : P \rightarrow Q$ be the surjective minimal $\mathbf{P}$-morphism dual to the inclusion of $L_0$ into $L$. Recall that $P$ and $Q$ are respectively the posets of the join-irreducible elements of $L$ and $L_0$.\\
We consider two cases:\\
The first case is when $f$ is of the first kind, i.e. $\text{dom } f \neq P$ and there exists only one element $p \in P \setminus \text{dom } f$. In this case, by minimality of $f$, the restriction of $f$ on its domain is an isomorphism of posets. We want to prove that $x=p$ is a primitive element of $L$ of the first kind over $L_0$.\\
We observe that the downset $\fregiu p$ cannot be generated by the preimage of any downset in $Q$ because $p$ is not in the domain of $f$, therefore $x \notin L_0$.\\
For any $q \in Q$ let $q'$ be the unique element of $P$ in the preimage of $q$ by $f$, then $\fregiu f^{-1} (\fregiu q)=\fregiu q'$ because $f$ is a $\mathbf{P}$-morphism. Hence if $q' \leq p$ then $\fregiu q'-\fregiu p= \emptyset$ and if $q' \nleq p$ then $\fregiu q' -\fregiu p=\fregiu q'$. This translates to the fact that for any $a$ join-irreducible of $L_0$ we have $a-x \in L_0$ because both $\emptyset$ and $\fregiu q'$ are generated by the preimage of a downset of $Q$. Furthermore, for any $q \in Q$ if $p \leq q'$ then $\fregiu p - \fregiu q'=\emptyset$ and if $p \nleq q'$ then $\fregiu p-\fregiu q'=\fregiu p$. Thus for any $a$ join-irreducible of $L_0$ either $x-a=0$ or $x-a=x$.\\
The second case is when $f$ is of the second kind, i.e. $\text{dom } f = P$ and only two elements $p_1,p_2$ have the same image by $f$, recall that $p_1,p_2$ are incomparable. We want to prove that $x_1=p_1$ and $x_2=p_2$ form a primitive couple of elements of $L$ of the second kind over $L_0$.\\
$x_1 \neq x_2$ and $x_1,x_2 \in L_0$ because the downsets $\fregiu p_1$ and $\fregiu p_2$ are distinct and neither of them is generated by the preimage of a downset in $Q$. Indeed, since $f$ is a total map the preimages of downsets of $Q$ are already downsets of $P$ and any preimage contains $p_1$ iff it contains $p_2$.\\
Let $f(p_1)=f(p_2)=g \in Q$ then $f^{-1}(g)= \lbrace p_1, p_2 \rbrace$, since $f$ is total $f^{-1}(\fregiu g)$ is a downset and thus $f^{-1}(\fregiu g)= \fregiu f^{-1}(\fregiu g)$. We have that $f^{-1}(\fregiu g)=\fregiu p_1 \cup \fregiu p_2$ because $f$ is a $\mathbf{P}$-morphism and therefore $f^{-1}(\fregiu g)-\fregiu p_1=\fregiu p_2$ and $f^{-1}(\fregiu g)-\fregiu p_2=\fregiu p_1$ because $p_1$ and $p_2$ are incomparable. Therefore $g-x_1=x_2$ and $g-x_2=x_1$.\\
Let $q \in Q$ such that $q <g$ and $q'$ be the unique element of $P$ in the preimage of $q$ by $f$, then $\fregiu f^{-1} (\fregiu q)=\fregiu q'$ because $f$ is a $\mathbf{P}$-morphism. Hence if $q' \leq p$ then $\fregiu q'-\fregiu p= \emptyset$ and if $q' \nleq p$ then $\fregiu q' -\fregiu p=\fregiu q'$. Both $\emptyset$ and $\fregiu q'$ are generated by the preimage of a downset of $Q$. This means that for any $a$ join-irreducible of $L_0$ such that $a < g$ we have $a-x_i \in L_0$ for $i=1,2$.
\end{proof}

\begin{definition}
Let $L_0$ be a finite CBS.\\
We call \textit{signature of the first kind} in $L_0$ a couple $(h,G)$ where $h \in L_0$ and $G$ is a set of two-by-two incomparable join-irreducible elements of $L_0$ such that $h < g$ for all $g \in G$. We allow $G$ to be empty.\\
We call \textit{signature of the second kind} in $L_0$ a triple $(h_1,h_2,g)$ where $h_1,h_2 \in L_0$, $g$ is a join-irreducible element of $L_0$ such that $h_1 \vee h_2 =g^-$ the unique predecessor of $g$ in $L_0$.
\end{definition}

\begin{theorem} \label{theosign}
Let $L_0$ be a finite CBS. To give a minimal finite extension either of the first or of the second kind of $L_0$ (up to isomorphism over $L_0$) is equivalent to give respectively:
\begin{enumerate}
\item A signature $(h,G)$ of the first kind in $L_0$.
\item A signature $(h_1,h_2,g)$ of the second kind in $L_0$.
\end{enumerate} 
\end{theorem}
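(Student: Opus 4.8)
The plan is to pass to the dual side. Write $Q$ for the poset of join-irreducible elements of $L_0$, so $L_0\cong\mathcal{D}(Q)$. By finite duality (Theorem~\ref{th:dualitybs}), giving a minimal finite extension of $L_0$ of the first (resp.\ second) kind up to isomorphism over $L_0$ is the same as giving a minimal surjective $\mathbf{P}$-morphism $f\colon P\to Q$ of the first (resp.\ second) kind up to isomorphism over $Q$. So I would describe these $\mathbf{P}$-morphisms by purely combinatorial data, and then translate that data, via the dictionaries ``downsets of $Q$ $=$ elements of $L_0$'' and ``upsets of $Q$ $=$ their antichains of minimal elements'', into signatures. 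Theorems~\ref{theoext1}, \ref{theoext2} and~\ref{theominimalprimitive} would be used at the end of each case to re-read the combinatorial picture in terms of the primitive generator(s), and so to identify the signature attached to a given extension.

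First kind: here $f$ exhibits $P$ as $Q\sqcup\{p\}$ with an order extending that of $Q$, $f$ being the identity on $Q$ and undefined at $p$; two such are isomorphic over $Q$ exactly when the new point $p$ has the same down-set $D^{\downarrow}:=\{q\in Q:q\le p\}$ and up-set $D^{\uparrow}:=\{q\in Q:p\le q\}$. One checks that a pair consisting of a downset $D^{\downarrow}$ and an upset $D^{\uparrow}$ of $Q$ arises this way if and only if $d\le u$ for all $d\in D^{\downarrow}$, $u\in D^{\uparrow}$: this condition forces $D^{\downarrow}\cap D^{\uparrow}=\emptyset$, makes $Q\sqcup\{p\}$ a poset, and then both clauses in the definition of $\mathbf{P}$-morphism hold automatically since $f$ is the identity on its domain. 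Now $D^{\downarrow}$ corresponds to an element $h\in L_0$ and $D^{\uparrow}$ is recovered from its set $G:=\min D^{\uparrow}$ of minimal elements, a (possibly empty) set of pairwise incomparable join-irreducibles of $L_0$; the compatibility condition becomes exactly ``$h<g$ for all $g\in G$''. Hence $(D^{\downarrow},D^{\uparrow})\mapsto(h,G)$ is a bijection onto the signatures of the first kind. Finally, in $L:=\mathcal{D}(P)$ the unique join-irreducible not in $L_0$ is $x:={\downarrow}p$; by Theorem~\ref{theominimalprimitive} this is the primitive element of the first kind that generates $L$ over $L_0$ (cf.\ Theorem~\ref{theoext1}), and unwinding the definitions yields $h=\max\{a\in L_0:a\le x\}$ and $G=\min\{g\text{ join-irreducible of }L_0:x\le g\}$; so the signature read off $L$ is exactly $(h,G)$ and the two assignments are mutually inverse.

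Second kind: now $f$ is total, so $P$ arises from $Q$ by splitting a single join-irreducible $g$ of $L_0$ into two incomparable points $p_1,p_2$, all other fibres being singletons; then $x_i:={\downarrow}p_i$ and $g=x_1\vee x_2$ by Remark~\ref{remarkprim2}. Inspecting the two $\mathbf{P}$-morphism conditions for $f$ shows that $p_1$ and $p_2$ must each lie below exactly the elements of $Q$ strictly above $g$, and that their down-sets $D_i:=\{q\in Q:q\le p_i\}$ must be downsets of $Q$ with $D_1\cup D_2=\{q\in Q:q<g\}$; conversely every such pair $(D_1,D_2)$ produces a valid $P$, and two such are isomorphic over $Q$ iff they share the same $g$ and the same unordered pair $\{D_1,D_2\}$. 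Under $L_0\cong\mathcal{D}(Q)$, $D_i$ corresponds to $h_i\in L_0$ and the condition $D_1\cup D_2=\{q<g\}$ becomes $h_1\vee h_2=g^{-}$, the unique predecessor of $g$ in $L_0$ --- that is, precisely a signature of the second kind. As in the first case one gets $h_i=\max\{a\in L_0:a\le x_i\}$ (using Theorems~\ref{theominimalprimitive} and~\ref{theoext2}), so the signature read off the extension is $(h_1,h_2,g)$. The only discrepancy with a literal bijection is the exchange $(h_1,h_2,g)\leftrightarrow(h_2,h_1,g)$: an abstract extension remembers only the unordered pair of new join-irreducibles $\{x_1,x_2\}$, so these two signatures determine the same extension up to isomorphism over $L_0$ (and the correspondence is injective precisely on the diagonal $h_1=h_2$); this should be stated explicitly.

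The step I expect to carry the real weight is the realizability check: that every prospective combinatorial datum is actually attained. For the first kind this is just the verification that, granted the product condition, $Q\sqcup\{p\}$ with the prescribed comparabilities is a poset and $f$ satisfies clauses~\ref{itm:defpmorp1} and~\ref{itm:defpmorp2} --- short casework. For the second kind it is more delicate: clause~\ref{itm:defpmorp2} is what forces both split points to sit below every element above $g$ and forces $D_1\cup D_2$ to exhaust $\{q<g\}$ (an element of $Q$ below $g$ must have its image $g$ ``witnessed'' through $p_1$ or $p_2$), and one must then check transitivity through $p_1,p_2$ and that $f$ remains a surjective total $\mathbf{P}$-morphism; this is where the absence of joins shows up as extra bookkeeping. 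Once the dual dictionary is fixed, the remaining identifications ($D^{\downarrow}\leftrightarrow h$, $\min D^{\uparrow}\leftrightarrow G$, $D_1\cup D_2=\{q<g\}\leftrightarrow h_1\vee h_2=g^{-}$, $h\leftrightarrow\max\{a\le x\}$) and the re-derivation of the element-level descriptions of $x$ and of $(x_1,x_2)$ needed to invoke Theorems~\ref{theoext1} and~\ref{theoext2} for the converse direction are routine.
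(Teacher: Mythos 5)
Your proposal follows essentially the same route as the paper: the paper proves precisely your dual classification as Lemma \ref{lemmasign} (pairs $(D,U)$ for the first kind, triples $(D_1,D_2,g)$ for the second, together with the uniqueness-up-to-isomorphism-over-$Q$ check) and then obtains Theorem \ref{theosign} by the same translation dictionary you describe, so your plan is correct in substance. One small correction: the condition ``$d\le u$ for all $d\in D^{\downarrow}$, $u\in D^{\uparrow}$'' does \emph{not} by itself force $D^{\downarrow}\cap D^{\uparrow}=\emptyset$ (take $Q$ a singleton $\{q\}$ and $D^{\downarrow}=D^{\uparrow}=\{q\}$); disjointness must be imposed as a separate requirement, as in item \ref{itm:lemmasign1} of Lemma \ref{lemmasign} --- it is exactly what the strictness in ``$h<g$ for all $g\in G$'' encodes on the algebraic side.
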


Once again finite duality shows its usefulness. Indeed, we have the following lemma:

\begin{lemma} \label{lemmasign}
Let $Q$ be a finite poset. To give a minimal surjective $\mathbf{P}$-morphism $f$ with codomain $Q$ either of the first or of the second kind (up to isomorphism) is equivalent to give respectively:
\begin{enumerate}
\item $D,U$ respectively a downset and an upset\footnotemark of $Q$ such that $D \cap U= \emptyset$ and for any $d \in D, u \in U$ we have $d \leq u$. \label{itm:lemmasign1}
\item $g \in Q$ and $D_1,D_2$ downsets of $Q$ such that $D_1 \cup D_2= \fregiu g \setminus \lbrace g \rbrace$. \label{itm:lemmasign2}
\end{enumerate}
\footnotetext{The definitions of upset and of the upset $\fresu a$ generated by an element $a$ are analogous to the definitions for the downsets replacing $\leq$ with $\geq$.}
\end{lemma}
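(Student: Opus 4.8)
The plan is to treat the two kinds in parallel: for each, exhibit a construction turning the combinatorial datum into a minimal surjective $\mathbf{P}$-morphism with codomain $Q$ of the corresponding kind, a construction going back, and a check that the two are mutually inverse up to isomorphism over $Q$. Recall that a minimal surjective $\mathbf{P}$-morphism $f\colon P\to Q$ of the first kind has a unique element $p\in P\setminus\mathrm{dom}\,f$ and $f$ restricts to a poset isomorphism $\mathrm{dom}\,f\xrightarrow{\ \sim\ }Q$; one of the second kind is total, has a single non-singleton fibre $f^{-1}(g)=\{p_1,p_2\}$ (its two elements incomparable by condition \ref{itm:condpartincomp} of Proposition \ref{proppartition}) and all other fibres singletons.

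\emph{First kind.} Starting from such an $f$, identify $\mathrm{dom}\,f$ with $Q$ via the isomorphism, so $P=Q\sqcup\{p\}$ and the order of $Q$ is the one induced on $Q\subseteq P$. Put $D=\{q\in Q:q<p\}$ and $U=\{q\in Q:p<q\}$; then $D$ is a downset and $U$ an upset of $Q$, $D\cap U=\emptyset$ by antisymmetry, and $d<p<u$ forces $d\le u$ for $d\in D$, $u\in U$, so $(D,U)$ is as in \ref{itm:lemmasign1}. Conversely, from such a pair $(D,U)$, set $P=Q\sqcup\{p\}$ with $p$ a fresh point, keep the order of $Q$, declare $q<p$ for all $q\in D$ and $p<q$ for all $q\in U$, and take the transitive closure. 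The point to check is that this closure creates no new comparability between elements of $Q$ and no cycle: a new relation $q_1<q_2$ would factor through $p$, giving $q_1\le q_1'\in D$ and $q_2'\in U$ with $q_2'\le q_2$, hence $q_1\in D$, $q_2\in U$, and $q_1\le q_2$ already; a cycle would put some element of $Q$ in $D\cap U$. So $P$ is a poset with $\#P=\#Q+1$, and the partial map $f\colon P\to Q$ with $\mathrm{dom}\,f=Q$ acting as the identity on $Q$ is a surjective $\mathbf{P}$-morphism of the first kind: \ref{itm:defpmorp1} holds because the order on $Q$ is unchanged, and \ref{itm:defpmorp2} holds taking $r=q$. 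The two passages are inverse to one another up to the canonical identification $\mathrm{dom}\,f\cong Q$.

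\emph{Second kind.} Starting from $f$, identify the singleton fibres with $Q\setminus\{g\}$ and set $D_i=\{q\in Q\setminus\{g\}:q<p_i\}$ for $i=1,2$. Condition \ref{itm:defpmorp1} forces the order of $P$ on $Q\setminus\{g\}$ to be that of $Q$, forces $q<p_i\Rightarrow q<_Q g$ (so $D_i\subseteq\fregiu g\setminus\{g\}$) and forces $p_i<q\Rightarrow g<_Q q$ for $q\ne g$; together with \ref{itm:defpmorp2} this also shows the $D_i$ are downsets of $Q$. Moreover $D_1\cup D_2=\fregiu g\setminus\{g\}$: if $q<_Q g$, apply condition \ref{itm:condpartstar} of Proposition \ref{proppartition} to the fibre partition with $A=\{q\}$, $B=\{p_1,p_2\}$ to get $q<p_1$ or $q<p_2$. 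So $(g,D_1,D_2)$ is as in \ref{itm:lemmasign2}. Conversely, from such a triple, set $P=(Q\setminus\{g\})\sqcup\{p_1,p_2\}$, keep the order of $Q\setminus\{g\}$, declare $q<p_i$ for $q\in D_i$ and $p_i<q$ for $q>_Q g$, and take the transitive closure. As before this adds no relation among elements of $Q\setminus\{g\}$ (such a relation would force $a<_Q g<_Q b$) and, since a chain out of $p_i$ can only reach elements $>_Q g$, which lie in no $D_j$ (as $D_j\subseteq\fregiu g$), it leaves $p_1,p_2$ incomparable; hence $P$ is a poset with $\#P=\#Q+1$, and the fibre map $f$ sending $p_1,p_2$ to $g$ and fixing $Q\setminus\{g\}$ is a total surjective $\mathbf{P}$-morphism of the second kind — \ref{itm:defpmorp1} because the order on $Q\setminus\{g\}$ is unchanged and $p_i<q\Rightarrow g<_Q q$, and \ref{itm:defpmorp2} taking $r=q$ when $q\ne g$ and $r=p_i$ (with $a\in D_i$) when $q=g$. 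Again the two passages are mutually inverse up to isomorphism over $Q$, the isomorphism being allowed to interchange $p_1$ and $p_2$ (so the datum is really $g$ together with the unordered pair $\{D_1,D_2\}$).

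\emph{Main obstacle.} The genuinely delicate steps are: (i) showing the transitive closures in the two reconstructions introduce neither new comparabilities among old points nor cycles — this is exactly where the defining inequalities $d\le u$ (first kind) and $D_1,D_2\subseteq\fregiu g\setminus\{g\}$ (second kind) are used — and (ii) the identity $D_1\cup D_2=\fregiu g\setminus\{g\}$ in the forward direction of the second kind, which does not follow from the $\mathbf{P}$-morphism axioms directly but is immediate from condition \ref{itm:condpartstar} of Proposition \ref{proppartition}. Everything else reduces to a mechanical verification of the two clauses \ref{itm:defpmorp1}, \ref{itm:defpmorp2} in the definition of $\mathbf{P}$-morphism.
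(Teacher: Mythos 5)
Your proposal is correct and follows essentially the same route as the paper: extract $(D,U)$ (resp. $(g,D_1,D_2)$) from the comparabilities of the missing point (resp. of the doubled fibre), rebuild $P$ by adjoining a fresh point or splitting $g$, and argue the two passages are mutually inverse up to isomorphism over $Q$. Your transitive-closure checks and the use of Proposition~\ref{proppartition} for $D_1\cup D_2=\fregiu g\setminus\{g\}$ just make explicit verifications that the paper leaves implicit, while the paper's explicit isomorphism $\varphi$ in the uniqueness step corresponds to your identification of $\mathrm{dom}\,f$ (resp. the singleton fibres) with $Q$ (resp. $Q\setminus\{g\}$).
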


\begin{proof}
Let $f:P \to Q$ be a minimal surjective $\mathbf{P}$-morphism.\\
If $f$ is of the first kind and $\text{dom } f = P \setminus \lbrace x \rbrace$ take $D=f(\fregiu x \setminus \lbrace x \rbrace)$ and $U=f(\fresu x \setminus \lbrace x \rbrace)$.\\
If $f$ is of the second kind, i.e. $\text{dom } f = P$, then there is exactly one $g \in Q$ such that $f^{-1}(g)=\lbrace x_1,x_2 \rbrace$ consisting of two elements of $P$. Take $D_i= f(\fregiu x_i \setminus \lbrace x_i \rbrace)$ for $i=1,2$.\\
On the other hand, given $D,U$ as in \ref{itm:lemmasign1} ,we obtain a minimal surjective $\mathbf{P}$-morphism $f:P \to Q$ by taking $P= Q \sqcup \lbrace x \rbrace$ and extending the order of $Q$ setting $q < x$ iff $q \in D$ and $x < q$ iff $q \in U$ for any $q \in Q$. Take $\text{dom } f= Q \subset P$ and $f$ as the identity on its domain.\\
Given $g \in Q$, $D_1,D_2$ as in \ref{itm:lemmasign2} obtain a minimal surjective $\mathbf{P}$-morphism $f:P \to Q$ taking $P= Q \setminus \lbrace g \rbrace \sqcup \lbrace x_1,x_2 \rbrace$ and extending the order of $Q \setminus \lbrace g \rbrace$ setting $q < x_i$ iff $q \in D_i$ and $x_i < q$ iff $g < q$ for any $q \in Q$. Take $\text{dom } f= P$ and $f$ maps $x_1,x_2$ into $g$ and acts as the identity on $Q \setminus \lbrace g \rbrace$.\\
Now let $f_1 : P_1 \rightarrow Q$ and $f_2 : P_2 \rightarrow Q$ be two surjective $\mathbf{P}$-morphisms to which are associated the same $(D,U)$ or $(D_1,D_2,g)$, we show that there exists an isomorphism of posets $\varphi :P_1 \rightarrow P_2$ such that $f_2 \circ \varphi= f_1$.\\
Suppose $f_1,f_2$ are of the first kind and the same $(D,U)$ is associated to both of them. Then $\text{dom } f_1 = P_1 \setminus \lbrace p_1 \rbrace $ and $ \text{dom } f_2 = P_2 \setminus \lbrace p_2 \rbrace $.
Being $f_1,f_2$ two $\mathbf{P}$-morphisms which are isomorphisms when restricted on their domains, we can invert the restriction of $f_2$ and compose it with the restriction of $f_1$ to obtain an isomorphism of posets $\varphi' : \text{dom } f_1 \rightarrow \text{dom } f_2$.\\
It remains to extend $\varphi'$ to an isomorphism $\varphi :P_1 \rightarrow P_2$, just set $\varphi (p_1)=p_2$; $\varphi$ so defined is an isomorphism of posets, we need to show that it reflects and preserves the order of $P_1$. $f_1$ and $f_2$ map respectively the elements smaller than $p_1$ and $p_2$ into the same elements of $Q$ and the elements greater than $p_1$ and $p_2$ into the same elements of $Q$ by hypothesis. Hence $\varphi'$ maps the elements smaller than $p_1$ into the elements smaller than $p_2$ and the elements greater than $p_1$ into the elements greater than $p_2$ and so does its inverse. It follows that $\varphi$ is an isomorphism of posets.\\
Suppose $f_1,f_2$ are of the second kind and the same $(D_1,D_2,g)$ is associated to both of them. Then $f_1,f_2$ are total, i.e. $\text{dom } f_1=P_1$ and $ \text{dom } f_2=P_2$.
The orders restricted on $P_1 \backslash f_1^{-1}(g)$ and $P_2 \backslash f_2^{-1}(g)$ are both isomorphic to $Q \backslash \lbrace g \rbrace$ with isomorphisms given by the restrictions of $f_1,f_2$, indeed given two elements $a,b \in P_1 \backslash f_i^{-1}(g)$ it is $f_i(a) \leq f_i(b)$ iff $a \leq b$ because $f_i$ is a $\mathbf{P}$-morphism for $i=1,2$. Composing these two isomorphisms we obtain an isomorphism $\varphi' : P_1 \backslash f_1^{-1}(g) \rightarrow P_2 \backslash f_2^{-1}(g)$.\\
We now extend it to $\varphi:P_1 \to P_2$.\\
Let $f_i^{-1}(g)= \lbrace x_{1,i}, x_{2,i} \rbrace$ for $i=1,2$, we can suppose to have ordered the indices in such a way that $f_i(\fregiu x_{j,i} \backslash \lbrace x_{j,i} \rbrace)=D_j$ for $i,j=1,2$. Clearly we extend $\varphi'$ to $\varphi$ defining $\varphi(x_{j,1})=x_{j,2}$. It remains to show that $\varphi$ is order preserving and reflecting.\\
Let $p \in P_1$ be such that $p \notin \lbrace x_{1,1}, x_{2,1} \rbrace$.\\
Since $f_1,f_2$ are $\mathbf{P}$-morphisms and $f_i(x_{j,i})=g$ we get $x_{j,2}=\varphi(x_{j,1}) \leq \varphi(p)$ iff $g \leq f_2(\varphi(p))=f_1(p)$ iff $x_{j,1} \leq p$ for $j=1,2$.\\
Furthermore it is $p \leq x_{1,1}$ iff $f_1(p) \in D_1$ and $p \leq x_{2,1}$ iff $f_1(p) \in D_2$, similarly it is $\varphi(p) \leq x_{1,2}$ iff $f_1(p)=f_2(\varphi(p)) \in D_1$ and $\varphi(p) \leq x_{2,2}$ iff $f_1(p)=f_2(\varphi(p)) \in D_2$.\\
Therefore $\varphi$ is order preserving and reflecting.
\end{proof}

\begin{proof}[Proof of Theorem \ref{theosign}]
We just need to translate Lemma \ref{lemmasign} in the language of CBSes using the finite duality:\\
A signature of the first kind $(h,G)$ in $L_0$ corresponds to a couple $(D,U)$ in $P$ as in \ref{itm:lemmasign1} of Lemma \ref{lemmasign}. Indeed, by K\"{o}hler duality, downsets of $P$ correspond to elements of $L_0$ and upsets of $P$ correspond to the sets of their minimal elements, i.e. sets of two-by-two incomparable join-irreducible elements of $L_0$. The conditions $D \cap U = \emptyset$ and $\forall d \in D, u \in U$ $d \leq u$ translate in the condition $h < g$ for any $g \in G$.\\
A signature of the second kind $(h_1,h_2,g)$ in $L_0$ corresponds to a triple $(D_1,D_2,g)$ in $P$ as in \ref{itm:lemmasign2} of Lemma \ref{lemmasign}. Indeed, $h_1,h_2 \in L_0$ correspond to the downsets $D_1,D_2$ and $g$ join-irreducible of $L_0$ is an element of $P$ (recall that $P$ is the poset of the join-irreducibles of $L_0$). The condition that $D_1 \cup D_2 = \fregiu g \setminus \lbrace g \rbrace$ translates into $h_1 \vee h_2 = g^-$ since the predecessor $g^-$ of $g$ in $L_0$ corresponds to the downset $\fregiu g \setminus \lbrace g \rbrace$ of $P$.
\end{proof}

\textit{Therefore signatures inside a finite CBS $L_0$ are like `footprints' left by the minimal finite extensions of $L_0$: any minimal finite extension of $L_0$ leaves a `footprint' inside $L_0$ given by the corresponding signature. On the other hand, given a signature inside $L_0$ we can reconstruct a unique (up to isomorphism over $L_0$) minimal extension of $L_0$ corresponding to that signature.}\\

Since, by Theorems \ref{theoext1}, \ref{theoext2} and \ref{theominimalprimitive}, minimal finite extension of a finite CBS $L_0$ are exactly the ones generated over $L_0$ either by a primitive element or by a primitive couple, to any element or couple primitive over $L_0$ it is associated a unique signature in $L_0$. This is exactly what the next definition and theorem talk about.

\begin{definition}
Let $L_0$ be a finite CBS and $L$ an extension of $L_0$.\\
We say that a primitive element $x \in L$ of the first kind over $L_0$ induces a signature of the first kind $(h,G)$ in $L_0$ if for any $a$ join-irreducible of $L_0$ we have that
\[
a < x \text{ iff } a \leq h \quad \text{ and } \quad x < a \text{ iff } g \leq a \text{ for some } g \in G
\]
We say that a primitive couple $(x_1,x_2) \in L^2$ of the second kind over $L_0$ induces a signature of the second kind $(h_1,h_2,g)$ in $L_0$ if $g=x_1 \vee x_2$ and for any $a$ join-irreducible of $L_0$ we have that
\[
a < x_i \text{ iff } a \leq h_i \quad \text{ for } i=1,2
\]
\end{definition}

\begin{theorem} \label{theoinduce}
Let $L_0$ be a finite CBS and $L$ an extension of $L_0$.\\
A primitive element $x \in L$ induces a signature $(h,G)$ iff the extension $L_0 \subseteq L_0 \langle x \rangle$ corresponds to that signature.\\
A primitive couple $(x_1,x_2) \in L^2$ induces a signature $(h_1,h_2,g)$ iff the extension $L_0 \subseteq L_0 \langle x_1,x_2 \rangle$ corresponds to that signature.
\end{theorem}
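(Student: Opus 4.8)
The plan is to read this statement as a dictionary entry: each of its two sides records which signature a primitive generator "sees" inside $L_0$, one side via the definition of \emph{inducing} a signature (order relations between the generator and the join-irreducibles of $L_0$) and the other via the correspondence of Theorem~\ref{theosign} applied to the extension $L_0\langle x\rangle$ that the generator produces; the proof will consist in unwinding K\"{o}hler duality until the two descriptions are seen to coincide. I will give the argument in detail for primitive elements of the first kind and then indicate the parallel adjustments for a primitive couple of the second kind.

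First I would invoke Theorem~\ref{theoext1}: it says $L_0 \subseteq L_0 \langle x \rangle$ is a minimal finite extension of the first kind, and its proof shows the join-irreducibles of $L_0 \langle x \rangle$ are precisely those of $L_0$ together with $x$. Dualising via Theorem~\ref{th:dualitybs}, writing $Q$ for the poset of join-irreducibles of $L_0$ and $P$ for that of $L_0 \langle x \rangle$, the inclusion is dual to the minimal surjective $\mathbf{P}$-morphism $f : P \rightarrow Q$ of the first kind with $\text{dom } f = P \setminus \lbrace x \rbrace$, whose restriction to its domain is a poset isomorphism onto $Q$. Since the inclusion $L_0 \hookrightarrow L_0 \langle x \rangle$ is an injective morphism of CBSes it reflects the order, so this isomorphism identifies the order of $Q$ with the order induced on the old join-irreducibles inside $L_0 \langle x \rangle$; in particular, for join-irreducibles $a,b$ of $L_0$, $a < b$ holds in $L_0$ iff it holds in $L_0 \langle x \rangle$. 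Now I would read off the signature of the first kind that $L_0 \subseteq L_0 \langle x \rangle$ determines, following the recipe in the proof of Theorem~\ref{theosign} (via Lemma~\ref{lemmasign}): it is the pair $(h_0,G_0)$ where $h_0 \in L_0$ is dual to the downset $f(\fregiu x \setminus \lbrace x \rbrace)$ of $Q$ and $G_0$ is the set of minimal elements of the upset $f(\fresu x \setminus \lbrace x \rbrace)$ of $Q$. Translating through the identification above, and using that in a finite CBS every element is the join of the join-irreducibles below it (Remark~\ref{listsimplefacts}), this reads
\[
h_0 = \bigvee \lbrace a \text{ join-irreducible of } L_0 : a < x \rbrace, \qquad G_0 = \min \lbrace a \text{ join-irreducible of } L_0 : x < a \rbrace .
\]

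Next I would match this with the definition of inducing a signature. Because a finite CBS is a distributive lattice in which every element is the join of the join-irreducibles below it, the requirement that $a \leq h$ hold exactly when $a < x$, for every join-irreducible $a$ of $L_0$, is equivalent to $h = \bigvee \lbrace a : a < x \rbrace$; and the requirement that $x < a$ hold exactly when $g \leq a$ for some $g \in G$, for every join-irreducible $a$ of $L_0$ --- given that $G$ is an antichain of join-irreducibles, which is part of being a signature of the first kind, and that $\lbrace a : x < a \rbrace$ is an upset of the poset of join-irreducibles of $L_0$ --- is equivalent to $G = \min \lbrace a : x < a \rbrace$. Comparing with the two displayed formulas shows that $x$ induces $(h,G)$ iff $(h,G) = (h_0,G_0)$, that is, iff the extension $L_0 \subseteq L_0 \langle x \rangle$ corresponds to $(h,G)$.

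For a primitive couple $(x_1,x_2)$ of the second kind, Theorem~\ref{theoext2} and Remark~\ref{remarkprim2} give that $L_0 \subseteq L_0 \langle x_1,x_2 \rangle$ is a minimal finite extension of the second kind, that its join-irreducibles are those of $L_0$ other than $g = x_1 \vee x_2$ together with $x_1$ and $x_2$, and that $x_1,x_2$ are incomparable with $x_i < g$. The dual is a total minimal surjective $\mathbf{P}$-morphism $f : P \rightarrow Q$ with $f^{-1}(g) = \lbrace x_1,x_2 \rbrace$ restricting to a poset isomorphism $P \setminus \lbrace x_1,x_2 \rbrace \to Q \setminus \lbrace g \rbrace$, and the associated signature of the second kind is $(h_1^0, h_2^0, g)$ with $h_i^0 \in L_0$ dual to $f(\fregiu x_i \setminus \lbrace x_i \rbrace)$; since $x_1,x_2$ are incomparable, $\fregiu x_i \setminus \lbrace x_i \rbrace \subseteq P \setminus \lbrace x_1,x_2 \rbrace$, so $h_i^0 = \bigvee \lbrace a \text{ join-irreducible of } L_0,\ a \neq g : a < x_i \rbrace$, and the restriction $a \neq g$ is vacuous because $g \nleq x_i$ (as $x_i < g$) and $g \nleq h_i^0$ (as $h_i^0 \leq g^- < g$), whence $h_i^0 = \bigvee \lbrace a \text{ join-irreducible of } L_0 : a < x_i \rbrace$. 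Exactly as before this shows that $(x_1,x_2)$ induces $(h_1,h_2,g)$ --- whose remaining clause $g = x_1 \vee x_2$ holds automatically by Remark~\ref{remarkprim2} --- iff the extension $L_0 \subseteq L_0 \langle x_1,x_2 \rangle$ corresponds to $(h_1,h_2,g)$. The whole argument is bookkeeping, and the one delicate point --- the main obstacle, such as it is --- is keeping the chain of identifications coherent: K\"{o}hler duality turns a join-irreducible of $L_0$ into a point of $Q$, a downset of $Q$ into the join in $L_0$ of the corresponding join-irreducibles, and an upset of $Q$ into its antichain of minimal elements, and all of this must stay consistent with the order the old join-irreducibles carry when viewed inside the extension, which is precisely why the order-reflection of the inclusion is invoked.
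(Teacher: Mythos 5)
Your proposal is correct and follows essentially the same route as the paper: the paper's (very terse) proof likewise just unwinds the definition of ``induces'' into the statement that $h$ (resp.\ $h_1,h_2$) is the predecessor of $x$ (resp.\ of $x_1,x_2$) in the generated extension and that $G$ is the antichain of join-irreducibles of $L_0$ minimal above $x$, and identifies this with the signature attached to the extension via Theorem~\ref{theosign} and Lemma~\ref{lemmasign}. Your version simply makes explicit the duality bookkeeping (order-reflection of the embedding, $D=f(\fregiu x\setminus\{x\})$, $U=f(\fresu x\setminus\{x\})$, $g=x_1\vee x_2$ from Remark~\ref{remarkprim2}) that the paper leaves implicit.
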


\begin{proof}
For a primitive element $x$ of the first kind over $L_0$ to induce a signature $(h,G)$ means that $h$ is the predecessor of $x$ in $L_0 \langle x \rangle$ and $G$ is the set of the join-irreducibles of $L_0$ which are minimal among the ones that are strictly greater than $x$ in $L_0 \langle x \rangle$. This is the same as saying that the signature $(h,G)$ is associated to the extension $L_0 \subseteq L_0 \langle x \rangle$.\\
For a primitive couple $(x_1,x_2)$ of the second kind over $L_0$ to induce a signature $(h_1,h_2,g)$ means that $h_i$ is the predecessor of $x_i$ in $L_0 \langle x_1,x_2 \rangle$ for $i=1,2$. This is the same as saying that the signature $(h_1,h_2,g)$ is associated to the extension $L_0 \subseteq L_0 \langle x_1,x_2 \rangle$.
\end{proof}

We have thus finally obtained an intermediate characterization of existentially closed CBSes:

\begin{theorem}\label{theoaxiominterm}
A CBS $L$ is existentially closed iff for any finite sub-CBS $L_0 \subseteq L$ we have:
\begin{enumerate}
\item Any signature of the first kind in $L_0$ is induced by a primitive element $x \in L$ of the first kind over $L_0$.
\item Any signature of the second kind in $L_0$ is induced by a primitive couple $(x_1,x_2) \in L^2$ of the second kind over $L_0$.
\end{enumerate}
\end{theorem}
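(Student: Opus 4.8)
The plan is to turn the abstract criterion of Theorem~\ref{theoexclosedcharac} — embeddability of finite extensions over finite sub-CBSes — into the combinatorial bookkeeping of signatures, using Theorems~\ref{theosign} and~\ref{theoinduce} as a translation dictionary between ``finite minimal extension corresponding to a signature'' and ``primitive element/couple inducing a signature''.

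\emph{Left to right.} Suppose $L$ is existentially closed, fix a finite sub-CBS $L_0 \subseteq L$, and let $s$ be a signature in $L_0$, say of the first kind. By Theorem~\ref{theosign} there is a minimal finite extension $C$ of $L_0$ of the first kind corresponding to $s$, and by Theorem~\ref{theominimalprimitive} we may write $C = L_0 \langle y \rangle$ with $y \in C$ primitive of the first kind over $L_0$. Theorem~\ref{theoexclosedcharac} provides an embedding $e \colon C \to L$ fixing $L_0$ pointwise; put $x := e(y)$. Since $e$ is an injective CBS-morphism that restricts to the identity on $L_0$, primitivity transfers: $x \notin L_0$, and for every join-irreducible $a$ of $L_0$ we get $a - x = e(a - y) = a - y \in L_0$ and $x - a = e(y - a) \in \{x, 0\}$ (because $y - a \in \{y, 0\}$). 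Moreover $L_0 \langle x \rangle = e(C)$ is isomorphic to $C$ over $L_0$, so the extension $L_0 \subseteq L_0 \langle x \rangle$ corresponds to $s$, and Theorem~\ref{theoinduce} then says that $x$ induces $s$. The second-kind case is identical, transporting the primitive couple along $e$.

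\emph{Right to left.} Assume the signature condition holds and verify Theorem~\ref{theoexclosedcharac}. Given a finite sub-CBS $L_0 \subseteq L$ and a finite extension $C \supseteq L_0$, dualize $L_0 \hookrightarrow C$ to a surjective $\mathbf{P}$-morphism and apply Theorem~\ref{thchain}; dualizing back gives a chain of minimal extensions $L_0 = C_0 \subseteq C_1 \subseteq \dots \subseteq C_n = C$. We construct embeddings $e_i \colon C_i \to L$ fixing $L_0$ pointwise by induction on $i$, with $e_0$ the inclusion. Given $e_{i-1}$, set $L_0' := e_{i-1}(C_{i-1})$, a finite sub-CBS of $L$; via the isomorphism $e_{i-1}$ the minimal extension $C_{i-1} \subseteq C_i$ corresponds to a minimal extension of $L_0'$ of the same kind, hence (Theorem~\ref{theosign}) to a signature $s'$ in $L_0'$. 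By hypothesis $s'$ is induced by a primitive element $x$ (or couple $(x_1,x_2)$) over $L_0'$; by Theorems~\ref{theoext1}/\ref{theoext2} this generates a minimal extension $L_0' \langle x \rangle$ (or $L_0' \langle x_1, x_2 \rangle$) inside $L$, and by Theorem~\ref{theoinduce} this extension also corresponds to $s'$. Since $C_i$ (viewed as an extension of $L_0'$ through $e_{i-1}$) and $L_0' \langle x \rangle$ correspond to the same signature, the uniqueness up to isomorphism over $L_0'$ in Theorem~\ref{theosign} furnishes an isomorphism $C_i \to L_0' \langle x \rangle$ extending $e_{i-1}$; composing with the inclusion $L_0' \langle x \rangle \hookrightarrow L$ yields $e_i$. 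Then $e_n \colon C \to L$ is the desired embedding.

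The work is all in the bookkeeping, which is also where the only real care is needed: the hypothesis must be applied at the moving sub-CBS $L_0' = e_{i-1}(C_{i-1})$ rather than at $L_0$; primitivity and the property of inducing a signature must be checked to be stable under the identifications $e_{i-1}$; and one must make sure the ``up to isomorphism over $L_0'$'' clause of Theorem~\ref{theosign} genuinely yields an isomorphism compatible with the already-built embedding. With these points pinned down, both implications are short.
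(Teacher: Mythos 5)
Your proposal is correct and follows essentially the same route as the paper: it reduces the statement to Theorem~\ref{theoexclosedcharac}, decomposes finite extensions into chains of minimal ones, and translates between minimal extensions and primitive elements/couples via Theorems~\ref{theosign} and~\ref{theoinduce} (with Theorems~\ref{theominimalprimitive}, \ref{theoext1}, \ref{theoext2} supplying the two directions of that translation). The only difference is one of detail: you spell out the inductive bookkeeping along the chain, applying the hypothesis at the moving sub-CBS $e_{i-1}(C_{i-1})$, which the paper compresses into the single phrase that one may ``restrict to the case in which $L_0'$ is a minimal finite extension of $L_0$.''
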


\begin{proof}
By the characterization of the existentially closed CBSes given in Theorem \ref{theoexclosedcharac} we have that a CBS $L$ is existentially closed iff for any finite sub-CBS $L_0$ and for any finite extension $L_0'$ of $L_0$ we have that $L_0'$ embeds into $L$ fixing $L_0$ pointwise. Since any finite extension of $L_0$ can be decomposed into a chain of minimal extensions, we can restrict to the case in which $L_0'$ is a minimal finite extension of $L_0$. Then the claim follows from Theorem \ref{theosign} and Theorem \ref{theoinduce}.
\end{proof}

\textit{Thanks to Theorem \ref{theoaxiominterm} we already get an axiomatization for the class of the existentially closed CBSes, indeed the quantification over the finite sub-CBS $L_0$ can be expressed elementarily using an infinite number of axioms. But this axiomatization is clearly unsatisfactory: other than being infinite, it is not conceptually clear.}

\section{Axioms} \label{section:axioms}

In this section we will prove that the existentially closed CBSes are exactly the ones satisfying the Splitting, Density 1 and Density 2 axioms. Each subsection focuses on one axiom. We will use extensively the characterization of existentially closed CBSes given by Theorem \ref{theoaxiominterm}. To show the validity of the axioms in any existentially closed CBS we will use the following lemma.

\begin{lemma} \label{lemdjcobrouw}
Let $\theta(\underline{x})$ and $\phi(\underline{x},\underline{y})$ be quantifier-free formulas in the language of CBSes. Assume that for every finite CBS $L_0$ and every tuple $\underline{a}$ of elements of $L_0$ such that $L_0 \vDash \theta(\underline{a})$, there exists an extension $L_1$ of $L_0$ which satisfies $\exists \underline{y} \phi(\underline{a},\underline{y})$.\\
Then every existentially closed CBS satisfies the following sentence:
\[ \forall \underline{x} ( \theta (\underline{x}) \longrightarrow \exists \underline{y} \phi(\underline{x},\underline{y})) \]
\end{lemma}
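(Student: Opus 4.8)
The plan is to take an existentially closed CBS $L$ together with an arbitrary tuple $\underline{a}$ of elements of $L$ with $L \vDash \theta(\underline{a})$, and to manufacture witnesses for $\exists \underline{y}\,\phi(\underline{a},\underline{y})$ inside $L$ by combining the hypothesis of the lemma with the amalgamation property. First I would let $L_0 \subseteq L$ be the sub-CBS generated by the (finitely many) components of $\underline{a}$; by local finiteness (Theorem \ref{faclocfin}) $L_0$ is finite. Since $\theta$ is quantifier-free and $L_0$ is a substructure of $L$ containing $\underline{a}$, the truth of $\theta(\underline{a})$ transfers down from $L$ to $L_0$, so $L_0 \vDash \theta(\underline{a})$. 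Hence the hypothesis applies to $L_0$ and $\underline{a}$, yielding an extension $L_1 \supseteq L_0$ with $L_1 \vDash \exists \underline{y}\,\phi(\underline{a},\underline{y})$; fix a tuple $\underline{d}$ in $L_1$ with $L_1 \vDash \phi(\underline{a},\underline{d})$.

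Next I would invoke the amalgamation property for CBSes (proved above) to amalgamate $L$ and $L_1$ over $L_0$: there is a CBS $D$ together with embeddings $L \hookrightarrow D$ and $L_1 \hookrightarrow D$ whose restrictions to $L_0$ coincide. Since $\underline{a}$ lies in $L_0$, the two embeddings send $\underline{a}$ to the same tuple in $D$; identifying $L$ and $L_1$ with their images, $D$ becomes an extension of $L$ in which $D \vDash \phi(\underline{a},\underline{d})$, so $D \vDash \exists \underline{y}\,\phi(\underline{a},\underline{y})$ (an existential formula is preserved in passing from $L_1$ to the extension $D$). Now $\exists \underline{y}\,\phi(\underline{a},\underline{y})$ is an existential sentence in the language of $L$ with parameters from $L$ that holds in the extension $D$ of $L$; since $L$ is existentially closed, it holds in $L$. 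As $\underline{a}$ was an arbitrary tuple of $L$ satisfying $\theta$, this shows $L \vDash \forall \underline{x}\,(\theta(\underline{x}) \longrightarrow \exists \underline{y}\,\phi(\underline{x},\underline{y}))$.

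The only step with genuine content is the use of amalgamation to transport the abstract extension $L_1$ given by the hypothesis into a bona fide extension of $L$ that still carries the same parameters $\underline{a}$; this is exactly what lets us feed the conclusion back into the definition of existential closedness. Everything else is routine bookkeeping about the preservation of quantifier-free formulas under substructures (downward) and of existential formulas under extensions (upward), plus the appeal to local finiteness to make $L_0$ a finite CBS so that the hypothesis is applicable.
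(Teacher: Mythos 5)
Your argument is correct, but it follows a different route from the paper's. The paper's proof stays entirely inside $L$: after forming the finite $L_0$ generated by $\underline{a}$ and obtaining $L_1 \supseteq L_0$ with witnesses $\underline{b}$, it cuts $L_1$ down to the finite sub-CBS $L_0' = L_0\langle b_1,\ldots,b_m\rangle$ (local finiteness again) and invokes Theorem \ref{theoexclosedcharac} to embed $L_0'$ into $L$ fixing $L_0$ pointwise; the quantifier-free $\phi$ then transfers along this embedding, producing witnesses in $L$ directly. You instead go in the opposite direction: you amalgamate the whole (possibly infinite) $L_1$ with $L$ over $L_0$ to get an extension $D$ of $L$, observe that $\exists\underline{y}\,\phi(\underline{a},\underline{y})$ holds in $D$, and then appeal to the raw definition of existential closedness to pull it down into $L$. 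Both are sound: your version needs the amalgamation theorem in its general (not merely finite) form, which the paper does establish, and in effect it inlines the relevant half of the proof of Theorem \ref{theoexclosedcharac} in a simpler setting where no diagram sentence is needed; the paper's version is shorter given that Theorem \ref{theoexclosedcharac} is already available, keeps all auxiliary structures finite, and never has to leave $L$ for an ambient amalgam. The only point worth making explicit in your write-up, which you do and the paper leaves tacit, is that $L_0 \vDash \theta(\underline{a})$ because quantifier-free formulas pass down to substructures; conversely, note that your final step implicitly replaces $D$ by an isomorphic copy genuinely containing $L$ so that the definition of existential closedness (stated for $\mathcal{A} \subseteq \mathcal{B}$) applies verbatim, which is routine.
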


\begin{proof}
Let $L$ be an existentially closed CBS.\\
Let $\underline{a}=(a_1, \ldots,a_n) \in L^n$ be such that $L \vDash \theta(\underline{a})$. Let $L_0$ be the sub-CBS of $L$ generated by $a_1, \ldots, a_n$, by local finiteness $L_0$ is finite. By hypothesis there exists an extension $L_1$ of $L_0$ and $\underline{b}=(b_1, \ldots,b_m) \in L_1^m$ such that $L_1 \vDash \phi(\underline{a},\underline{b})$.\\
Denote by $L_0'$ the sub-CBS of $L_1$ generated by $b_1, \ldots,b_m$ over $L_0$, it is a finite extension of $L_0$. By Theorem \ref{theoexclosedcharac} $L_0'$ embeds into $L$ fixing $L_0$ pointwise.\\
We thus get $L \vDash \phi(\underline{a},\underline{b'})$ where $\underline{b'}=(b_1', \ldots,b_m') \in L^m$ are the images of $b_1, \ldots,b_m$ by the embedding.\\
Therefore we have proved that:
\[
L \vDash \forall \underline{x} ( \theta (\underline{x}) \longrightarrow \exists \underline{y} \phi(\underline{x},\underline{y}))
\]
\end{proof}

\subsection{Splitting axiom} 

\noindent\textbf{[Splitting Axiom]} For every $a,b_1,b_2$ such that $b_1 \vee b_2 \ll a \neq 0$ there exist elements $a_1$ and $a_2$ different from $0$ such that:
\begin{equation*}
\begin{split}
     a-a_1 &= a_2 \geq b_2 \\ 
     a-a_2 &= a_1 \geq b_1 \\
     b_2-a_1 &=b_2-b_1 \\
     b_1-a_2 &=b_1-b_2 
\end{split}
\end{equation*}

\begin{theorem}
Any existentially closed CBS satisfies the Splitting Axiom.
\end{theorem}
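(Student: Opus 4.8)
The plan is to invoke Lemma~\ref{lemdjcobrouw}. Let $\theta(a,b_1,b_2)$ be the quantifier-free formula asserting $b_1\vee b_2\ll a\neq 0$ — that is, $(b_1\vee b_2)\vee a=a$, $\;a-(b_1\vee b_2)=a$ and $a\neq 0$ — and let $\phi(a,b_1,b_2,a_1,a_2)$ be the conjunction of $a_1\neq0$, $a_2\neq0$ and the four displayed equations of the Splitting Axiom. By Lemma~\ref{lemdjcobrouw} it then suffices to show: \emph{for every finite CBS $L_0$ and all $a,b_1,b_2\in L_0$ with $b_1\vee b_2\ll a\neq 0$ there is an extension $L_1\supseteq L_0$ containing elements $a_1,a_2$ that satisfy $\phi$.}

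I would build such an $L_1$ dually, through K\"ohler's duality (Theorem~\ref{th:dualitybs}). Write $L_0=\mathcal D(P)$ with $P$ the poset of join-irreducibles of $L_0$, and let $A,B_1,B_2$ be the downsets corresponding to $a,b_1,b_2$; the hypothesis $b_1\vee b_2\ll a$ says precisely that $B_1\cup B_2\subseteq A$ and that no maximal element of $A$ belongs to $B_1\cup B_2$. Hence every element of $A$ lies below a maximal element of $A$, and every such maximal element lies in $F:=A\setminus(B_1\cup B_2)$; note that $F$ is up-closed inside $A$ and that $F\neq\emptyset$ since $A\neq\emptyset$. I would define $P'$ by \emph{splitting the region $F$ in two}: set $P':=(P\setminus F)\sqcup F^{(1)}\sqcup F^{(2)}$ with $F^{(1)},F^{(2)}$ two copies of $F$, and order $P'$ by keeping the order of $P$ on $P\setminus F$ and declaring, moreover, that $p^{(i)}<q^{(i)}$ iff $p<q$ (for $p,q\in F$, same copy only); that for $p\in F$ and $q\in P\setminus F$ with $q<p$ — necessarily $q\in B_1\cup B_2$, because $A$ is a downset — one has $q<p^{(1)}$ iff $q\in B_1$ and $q<p^{(2)}$ iff $q\in B_2$ (so points of $B_1\cap B_2$ sit below both copies); and that for $p\in F$ and $q\in P\setminus F$ with $p<q$ — necessarily $q\notin A$ — one has $p^{(1)}<q$ and $p^{(2)}<q$. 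The collapsing map $f\colon P'\to P$ (the identity on $P\setminus F$, with $f(p^{(i)})=p$) is then a total surjective $\mathbf P$-morphism: conditions~\ref{itm:defpmorp1} and~\ref{itm:defpmorp2} of Theorem~\ref{th:dualitybs} are verified case by case, many cases being straightforward or vacuous exactly because $A$, $B_1$, $B_2$ are downsets. So $f$ is dual to an embedding $\iota\colon L_0\hookrightarrow L_1:=\mathcal D(P')$ — equivalently, $L_1$ is reached from $L_0$ by a chain of $\#F$ minimal extensions of the second kind (Theorem~\ref{thchain}) — and, $f$ being total, $\iota$ sends a downset $D$ of $P$ to $f^{-1}(D)$.

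With this $L_1$ I would take $a_1,a_2\in L_1$ to be the downsets $A_1:=B_1\cup F^{(1)}$ and $A_2:=B_2\cup F^{(2)}$ of $P'$; the rerouting above is designed exactly so that $A_1,A_2$ are downsets. Then $a_1,a_2\neq 0$ since $F^{(1)},F^{(2)}\neq\emptyset$; and $\iota(b_1)=B_1\subseteq A_1$, $\iota(b_2)=B_2\subseteq A_2$ (there are no starred preimages, as $F$ is disjoint from $B_1\cup B_2$), so $a_1\geq b_1$ and $a_2\geq b_2$. Since $\iota(a)=f^{-1}(A)=(B_1\cup B_2)\cup F^{(1)}\cup F^{(2)}=A_1\cup A_2$, one gets $a-a_1=\fregiu(\iota(a)\setminus A_1)=\fregiu((B_2\setminus B_1)\cup F^{(2)})=A_2=a_2$; the only nonobvious inclusion, $A_2\subseteq\fregiu((B_2\setminus B_1)\cup F^{(2)})$, holds because every element of $B_2$ lies below a maximal element of $A$, which lies in $F$, and the $B_2$-side routing places it below the corresponding point of $F^{(2)}$ — this is the precise use of the $\ll$-hypothesis. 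Symmetrically $a-a_2=a_1$. Finally $b_2-a_1=\fregiu(B_2\setminus(B_1\cup F^{(1)}))=\fregiu(B_2\setminus B_1)=b_2-b_1$ because $B_2$, being unstarred, is disjoint from $F^{(1)}$; symmetrically $b_1-a_2=b_1-b_2$. This verifies $\phi$, and Lemma~\ref{lemdjcobrouw} finishes the proof.

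I expect the construction of $L_1$ to be the main obstacle. Naive candidates fail: splitting only the join-irreducible components of $a$ by second-kind extensions, or duplicating $F$ while leaving the lower part of $P$ shared, does not keep $A_1$ and $A_2$ downsets, because a join-irreducible lying in $B_2\setminus B_1$ beneath the free region $F$ would be dragged into $A_1$. The point is to duplicate exactly the part of $a$ strictly above $b_1\vee b_2$ and reroute each lower join-irreducible to the copy on ``its side'' (to both copies, if it lies under $b_1\wedge b_2$); checking that $f$ is genuinely a $\mathbf P$-morphism and that all four Splitting identities come out right — together with the repeated, essential use of $b_1\vee b_2\ll a$ to guarantee that $F$ dominates $B_1\cup B_2$ and is nonempty — is where the technical effort concentrates.
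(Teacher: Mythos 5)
Your proposal is correct and follows essentially the same route as the paper: reduce via Lemma~\ref{lemdjcobrouw}, then build the extension dually by a total surjective $\mathbf{P}$-morphism that splits the part of $A$ lying strictly above $B_1\cup B_2$ into two copies, routing $B_1$ (resp.\ $B_2$, resp.\ $B_1\cap B_2$) under the first (resp.\ second, resp.\ both) copy, and taking $a_1,a_2$ to be $B_1\cup F^{(1)}$ and $B_2\cup F^{(2)}$. The only (immaterial) difference is that the paper duplicates all of $Q\setminus(B_1\cup B_2)$, including the points outside $A$, and recovers $A_1,A_2$ as downsets generated by the copies of the maximal elements of $A$, whereas you duplicate only $A\setminus(B_1\cup B_2)$ and define $A_1,A_2$ directly; both choices yield the same verification of the four Splitting identities.
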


\begin{proof}
It is sufficient to show, by Lemma \ref{lemdjcobrouw}, that for any finite CBS $L_0$ and $a,b_1,b_2 \in L_0$ such that  $b_1 \vee b_2 \ll a \neq 0$ there exists a finite extension $L_0 \subseteq L$ with $a_1,a_2 \in L$ different from $0$ such that:
\begin{equation*}
\begin{split}
     a-a_1 &= a_2 \geq b_2 \\ 
     a-a_2 &= a_1 \geq b_1 \\
     b_2-a_1 &=b_2-b_1 \\
     b_1-a_2 &=b_1-b_2 
\end{split}
\end{equation*}
Let $Q$ be the poset dual to $L_0$ and $A, B_1,B_2$ its downsets corresponding to $a, b_1,b_2$.\\
We obtain a surjective $\mathbf{P}$-morphism $ \pi :P \to Q$ in the following way:\\
For any $x \in Q$ such that $x \notin B_2$ (respectively $x \notin B_1$) let $\xi_{x,1}$ (respectively $\xi_{x,2}$) be a new symbol.\\
For any $x \in Q$ such that $x \in B_1 \cap B_2$ let $\xi_{x,0}$ be a new symbol.\\
Let $P$ be the set of all these symbols, we define an order on $P$ setting:
\begin{equation*}
\begin{split}
	\xi_{y,j} \leq \xi_{x,i} & \Leftrightarrow y \leq x \text{ and } \lbrace i,j \rbrace \neq \lbrace 1,2 \rbrace
\end{split}
\end{equation*}
Intuitively $P$ is composed by a copy of $B_1 \cup B_2$ and two copies of $Q \backslash (B_1 \cup B_2)$, one of the two copies is placed over $B_1$ and the other over $B_2$.\\
We define $\pi :P \rightarrow Q$ setting $\text{dom } \pi =P$ and $\pi(\xi_{x,i})=x$.\\
Let $\fregiu a_1, \ldots, \fregiu a_r$ be the join-irreducible components of $A$, for any $i$ we have $a_i \notin B_1 \cup B_2$ because by hypothesis $B_1 \cup B_2 \ll A$. Therefore $\pi^{-1}(\fregiu a_i)= \fregiu \xi_{a_i,1} \cup \fregiu \xi_{a_i,2}$\\
We take:
\[
A_1= \bigcup\limits_{i=1}^r \fregiu \xi_{a_i,1} \qquad \text{ and } \qquad A_2= \bigcup\limits_{i=1}^r \fregiu \xi_{a_i,2}
\]
We obtain $\pi^{-1}(A)-A_1=A_2$ and $\pi^{-1}(A)-A_2=A_1$, they are both not empty because $r \geq 1$ and $A$ is not empty.\\
Furthermore for any $x \in B_1 \cup B_2$ we have that $x \leq a_i$ for some $i$. Therefore if $x \in B_1 \backslash B_2$ it is $\xi_{x,1} \leq \xi_{a_i,1}$, if $x \in B_2 \backslash B_1$ it is $\xi_{x,2} \leq \xi_{a_i,2}$, finally if $x \in B_1 \cap B_2$ then $\xi_{x,0} \leq \xi_{a_i,1}$ and $\xi_{x,0} \leq \xi_{a_i,2}$. This implies that $\pi^{-1}(B_1) \subseteq A_1$ and $\pi^{-1}(B_2) \subseteq A_2$.\\
We now show that $A_1 \cap A_2 = \pi^{-1} (B_1) \cap \pi^{-1}(B_2)$.\\
Let $\xi \in P$, we show that $\xi \in A_1 \cap A_2$ iff $\xi \in \pi^{-1} (B_1) \cap \pi^{-1}(B_2)$.\\
If $\xi \in \pi^{-1} (B_1) \cap \pi^{-1}(B_2)$ then $\pi(\xi) \in B_1 \cap B_2$, therefore $\xi=\xi_{x,0}$ and $x \leq a_i$ for some $i$. It implies that $\xi_{x,0} \leq \xi_{a_i,1}$, thus $\xi_{x,0} \in A_1$ and $\xi_{x,0} \leq \xi_{a_i,2}$, therefore $\xi_{x,0} \in A_2$ and $\xi \in A_1 \cap A_2$.\\
On the other hand if $\xi \in A_1 \cap A_2$ then there exist $i,j$ such that $\xi \leq \xi_{a_i,1}$ and $\xi \leq \xi_{a_j,2}$. By definition of the order on $P$ it has to be $\xi=\xi_{x,0}$ with $x \in B_1 \cap B_2$, therefore $\xi \in \pi^{-1} (B_1) \cap \pi^{-1}(B_2)$.\\
Then
\[
\pi^{-1} (B_1) \cap \pi^{-1}(B_2) \subseteq A_1 \cap \pi^{-1} (B_2) \subseteq A_1 \cap A_2 = \pi^{-1} (B_1) \cap \pi^{-1}(B_2)
\]
Therefore
\begin{equation*}
\begin{split}
	& \pi^{-1} (B_2)-A_1= \pi^{-1} (B_2)-(A_1 \cap \pi^{-1} (B_2)) \\
	& =\pi^{-1} (B_2)-(\pi^{-1} (B_1) \cap \pi^{-1} (B_2))=\pi^{-1} (B_2)-\pi^{-1} (B_1)
\end{split}
\end{equation*}
Analogously we can show
\[
 \pi^{-1} (B_2)-A_1=\pi^{-1} (B_2)-\pi^{-1} (B_1)
\]
Thus taking the embedding $L_0 \hookrightarrow L$ dual to $\pi$ and $a_1,a_2 \in L$ corresponding to $A_1,A_2$ we have obtained what we were looking for.
\end{proof}

\begin{lemma} \label{lemmaauxjoinirr}
If $L$ is a CBS generated by a finite subset $X$ then any join-irreducible element of $L$ is a join-irreducible component in $L$ of some element of $X$.
\end{lemma}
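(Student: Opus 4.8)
The plan is to isolate the set $L'$ of all finite joins of join-irreducible components (computed in $L$) of elements of $X$, show that $L'$ is already a sub-CBS of $L$, conclude $L'=L$ since $L'\supseteq X$, and then finish with a one-line join-irreducibility argument.

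First I would record that $L$ is finite, being finitely generated in a locally finite variety (Theorem~\ref{faclocfin}); hence all the facts about join-irreducible components listed in Remark~\ref{listsimplefacts} are available. Set $J=\bigcup_{x\in X}\{\,g\mid g \text{ is a join-irreducible component of } x \text{ in } L\,\}$; by definition every element of $J$ is join-irreducible in $L$, and $J$ is finite. Let $L'$ be the set of all joins $\bigvee_{i\in I}g_i$ with $I$ finite and each $g_i\in J$, the empty join being $0$.

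Next I would check that $L'$ is a sub-CBS of $L$. It contains $0$ and is trivially closed under $\vee$. For closure under $-$, take $a=\bigvee_i g_i$ and $b=\bigvee_j h_j$ with all $g_i,h_j\in J$. By the identities $(a_1\vee\cdots\vee a_n)-b=(a_1-b)\vee\cdots\vee(a_n-b)$ and $a_i-(h_1\vee\cdots\vee h_k)=((a_i-h_1)-\cdots)-h_k$ from Remark~\ref{listsimplefacts}, it suffices to see that each $g_i-b$ lies in $L'$. But $g_i$ is join-irreducible, so by property~\ref{itm:propjoinirr5} of the Remark characterizing join-irreducibles, $g_i-h\in\{0,g_i\}$ for every $h\in L$; iterating over $h_1,\dots,h_k$ gives $g_i-b\in\{0,g_i\}\subseteq L'$. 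Hence $a-b=\bigvee_i(g_i-b)\in L'$.

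Finally, since $L$ is finite each $x\in X$ equals the join of its join-irreducible components, so $X\subseteq L'$; as $L'$ is a sub-CBS and $X$ generates $L$, we conclude $L'=L$. Now let $g\in L$ be join-irreducible. Then $g\in L'$, so $g=\bigvee_{i\in I}g_i$ with each $g_i\in J$, and join-irreducibility (property~\ref{itm:propjoinirr4}) forces $g=g_i$ for some $i$; thus $g\in J$, i.e.\ $g$ is a join-irreducible component of some element of $X$. The only step requiring genuine care is the closure of $L'$ under $-$, and that reduces cleanly to the stated fact that a difference of a join-irreducible element is trivial; the rest is bookkeeping.
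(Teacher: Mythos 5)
Your proof is correct; the only delicate point, closure of $L'$ under difference, is handled properly, since for $g$ join-irreducible one has $g-b \in \lbrace 0,g \rbrace$ for every $b \in L$ (property \ref{itm:propjoinirr5} of the Remark), so $a-b=\bigvee_i (g_i-b)$ stays inside $L'$, and finiteness of $L$ (local finiteness applied to the finitely generated $L$) guarantees both that each $x\in X$ is the join of its join-irreducible components and that such components exist. The route, however, is genuinely different from the paper's. You argue semantically: you exhibit the candidate set $L'$ of finite joins of join-irreducible components of elements of $X$, verify it is a sub-CBS containing $X$, conclude $L'=L$, and then invoke join-irreducibility once. The paper argues syntactically: using the identities \eqref{eq:plus} it first puts every term into a normal form (a join of terms built from $-$ and variables only), then proves by induction on the length of such a difference-only term that its value is a join of join-irreducible components of its \emph{leftmost} variable, and concludes from there. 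Your closure argument is shorter and avoids the induction on terms, and it even yields the slightly stronger fact that every element of $L$ is a join of join-irreducible components of elements of $X$. What the paper's term-based formulation buys is the explicit bookkeeping of the leftmost variable and of difference-only terms, which is reused later (see the footnote on terms whose leftmost variable bounds their value, and the inductions in Lemma \ref{lemmaauxsplit}); for the present lemma alone, your argument is a clean and complete alternative.
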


\begin{proof}
In any CBS the following identities hold:
\begin{equation*}\label{eq:plus} \tag{+}
\begin{split}
	c-(a \vee b) &= (c-a)-b \\
	(a \vee b)-c &= (a-c) \vee (b-c) \\
	c-0 &=c \\
	0-c &=0	
\end{split}
\end{equation*}
It follows by an easy induction that any term in the language of CBS is equivalent to a term of the form $x_1 \vee \cdots \vee x_n$ with $x_1, \ldots x_n$ containing only the difference symbol and variables. 
Notice that if an element $x_1 \vee \cdots \vee x_m$ with $x_1, \ldots x_m \in L$ is join-irreducible then it coincides with $x_i$ for some $i=1, \ldots,m$; thus any join-irreducible element $g$ of $L$ is the interpretation of a term $t$ over the variables $X$ containing only the difference symbol. This implies that $g$ is the join of some join-irreducible components of the leftmost variable in $t$. Indeed, this can be proved by induction on the complexity of the term observing that if $c_1, \ldots, c_m$ are the join-irreducible components of an element $c \in L$ then for any $b \in L$:
\[
c-b=(c_1 \vee \cdots \vee c_m)-b=(c_1-b)\vee \cdots \vee (c_m-b)= \bigvee_{c_i \nleq b} c_i
\]
because $c_i-b=0$ or $c_i-b=c_i$, respectively, when $c_i \leq b$ or $c_i \nleq b$ since the $c_i$'s are join-irreducibles.\\
Thus $g$ is the join of the join-irreducible components of some $x \in X$, so, since it is join-irreducible, it is a join-irreducible component of $x$. 
\end{proof}


\begin{remark*}
Lemma \ref{lemmaauxjoinirr} is not true for co-Heyting algebras.\\
Indeed, consider the inclusion $L_0 \hookrightarrow L_1$ of co-Heyting algebras described by Figure \ref{fig:lemmaextgenexample1}.
$L_1$ is generated by $L_0$ and $a$ but $b=a \wedge (1-a)$ is join-irreducible in $L_1$ and it is not a join-irreducible component of any element of $L_0$ or $a$.
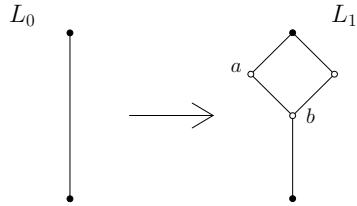
\begin{figure}[!h]
\centering
\scalebox{0.55}{
\begin{tikzpicture}[line cap=round,line join=round,>=triangle 45,x=1.0cm,y=1.0cm]
\clip(-5.,-0.5) rectangle (4.5,5.2);
\draw (-1.8988170326662965,2.00872402099691)-- (0.10118296733370835,2.00872402099691);
\draw (-0.3988170326662934,2.308724020996912)-- (0.10118296733370835,2.00872402099691);
\draw (-0.3988170326662934,1.7087240209969097)-- (0.10118296733370835,2.00872402099691);
\draw (1.0031829673337054,3.004724020996913)-- (2.003182967333706,4.004724020996913);
\draw (1.0031829673337054,3.004724020996913)-- (2.003182967333706,2.0047240209969104);
\draw (2.003182967333706,4.004724020996913)-- (3.003182967333706,3.004724020996913);
\draw (3.003182967333706,3.004724020996913)-- (2.003182967333706,2.0047240209969104);
\draw (2.003182967333706,2.0047240209969104)-- (2.003182967333706,0.004724020996902639);
\draw (-3.3215717243733187,4.000758124885548)-- (-3.3215717243733187,7.581248855379613E-4);
\draw (0.4,3.4) node[anchor=north west] {\Large $a$};
\draw (-4.9,4.8) node[anchor=north west] {\LARGE $L_0$};
\draw (2.8,4.8) node[anchor=north west] {\LARGE $L_1$};
\draw (2.2,2.3) node[anchor=north west] {\Large $b$};
\begin{scriptsize}
\draw [fill=ffffff] (1.0031829673337054,3.004724020996913) circle (2.0pt);
\draw [fill=black] (2.003182967333706,4.004724020996913) circle (2.0pt);
\draw [fill=ffffff] (2.003182967333706,2.0047240209969104) circle (2.0pt);
\draw [fill=ffffff] (3.003182967333706,3.004724020996913) circle (2.0pt);
\draw [fill=black] (2.003182967333706,0.004724020996902639) circle (2.0pt);
\draw [fill=black] (-3.3215717243733187,4.000758124885548) circle (2.0pt);
\draw [fill=black] (-3.3215717243733187,7.581248855379613E-4) circle (2.0pt);
\end{scriptsize}
\end{tikzpicture}
}
\caption{The inclusion $L_0 \hookrightarrow L_1$} \label{fig:lemmaextgenexample1}
\end{figure}
\end{remark*}

\begin{lemma} \label{lemmaauxdual}
Let $L_0$ be a finite sub-CBS of $L$ and let $L$ be generated by $L_0$ and $a_1, \ldots, a_n \in L$.\\
If $a_1, \ldots, a_n$ are joins of join-irreducible components in $L$ of elements of $L_0$\footnotemark then the surjective $\mathbf{P}$-morphism $\varphi: P \to Q$ dual to the inclusion $L_0 \hookrightarrow L$ is such that $\text{dom } \varphi =P$. In particular, the inclusion is also a co-Heyting algebras morphism, i.e. it preserves meets and $1$.
\footnotetext{For instance, this happens when we have that all the $a_i$'s are of the kind $a_i=b_i-c_i$ with $b_i \in L_0$ and $c_i \in L$. Indeed, in such case, the join-irreducible components in $L$ of the $a_i$'s are among those of $b_i$.}
\end{lemma}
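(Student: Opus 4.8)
The plan is to run everything through K\"{o}hler's finite duality (Theorem~\ref{th:dualitybs}). Since $L$ is generated by the finite set $L_0\cup\{a_1,\dots,a_n\}$, it is itself finite by Theorem~\ref{faclocfin}, so $P$ and $Q$ are exactly the posets of join-irreducible elements of $L$ and of $L_0$; writing $\iota\colon L_0\hookrightarrow L$ for the inclusion, the object/morphism correspondence recalled in the proof of Theorem~\ref{th:dualitybs} says that, for every $x\in L_0$, the set $\{h\in P\mid h\le x\}$ of join-irreducibles of $L$ below $x$ equals $\fregiu\varphi^{-1}(D_x)$, where $D_x=\{q\in Q\mid q\le x\}$. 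The whole statement then reduces to showing $\text{dom }\varphi=P$: once $\varphi$ is total, $\varphi^{-1}$ sends downsets to downsets and commutes with finite intersections and with $Q$, so, read back through the duality, $\iota$ preserves binary meets and the top element, which is precisely the assertion that $\iota$ is a co-Heyting algebras morphism (this is the observation in the footnote in the proof of Theorem~\ref{thchain}).

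First I would prove the auxiliary fact that \emph{every join-irreducible of $L$ is a join-irreducible component in $L$ of some element of $L_0$}. Apply Lemma~\ref{lemmaauxjoinirr} to the finite generating set $L_0\cup\{a_1,\dots,a_n\}$: any join-irreducible $g$ of $L$ is a join-irreducible component in $L$ of an element of that set. If that element lies in $L_0$ there is nothing to do. Otherwise it is some $a_i$; by hypothesis $a_i=g_{i1}\vee\cdots\vee g_{ik}$ with each $g_{ij}$ a join-irreducible component in $L$ of an element of $L_0$, and since every join-irreducible below $a_i$ lies below one of the $g_{ij}$, the join-irreducible components in $L$ of $a_i$ are among the $g_{ij}$. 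Hence $g=g_{ij}$ for some $j$, which is of the required form.

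Next I would deduce $\text{dom }\varphi=P$ from this. Fix $g\in P$. Using the auxiliary fact together with the decomposition of an element of $L_0$ into its join-irreducible components computed \emph{in $L_0$} (Remark~\ref{listsimplefacts}), I can pick a \emph{join-irreducible} element $b$ of $L_0$ such that $g\le b$ in $L$ and $g$ is maximal among the join-irreducibles of $L$ lying below $b$. Since $b$ is join-irreducible in $L_0$, $D_b$ is the principal downset $\fregiu b$ of $Q$ and represents $b$ itself, so the duality formula gives $\{h\in P\mid h\le b\}=\fregiu\varphi^{-1}(\fregiu b)=\{h\in P\mid \exists h'\ge h,\ h'\in\text{dom }\varphi,\ \varphi(h')\le b\}$. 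Taking $h=g$ produces some $h'\ge g$ with $h'\in\text{dom }\varphi$ and $\varphi(h')\le b$. I would also use the general fact — obtained by instantiating the same formula with the principal downset $\fregiu\varphi(h')$ — that $h'\le\varphi(h')$ for every $h'\in\text{dom }\varphi$. Then $g\le h'\le\varphi(h')\le b$, so $h'$ is a join-irreducible of $L$ sitting between $g$ and $b$, and maximality of $g$ forces $h'=g$; hence $g\in\text{dom }\varphi$. Since $g$ was arbitrary, $\varphi$ is total, and the ``in particular'' clause follows as explained above.

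The first step is just a bookkeeping application of Lemma~\ref{lemmaauxjoinirr}. The delicate point, and the one I expect to be the main obstacle, is the second step: one has to be scrupulous about whether join-irreducibility and join-irreducible components are meant in $L$ or in $L_0$. The decisive move is to replace an arbitrary element of $L_0$ above $g$ by a \emph{join-irreducible} element of $L_0$ of which $g$ is still a component \emph{in $L$} — it is exactly this that allows the maximality of $g$ to collapse $h'$ back to $g$.
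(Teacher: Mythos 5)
Your proposal is correct and takes essentially the same route as the paper: both first use Lemma~\ref{lemmaauxjoinirr} plus the hypothesis on the $a_i$'s to show that every join-irreducible of $L$ is a join-irreducible component in $L$ of an element of $L_0$, and then use the duality description $\fregiu \varphi^{-1}(\cdot)$ of (the image in $L$ of) an element of $L_0$ together with the maximality of that component to force the element of $\text{dom } \varphi$ lying above it to coincide with it. The differences are only cosmetic: you argue directly, reducing to a join-irreducible $b$ of $L_0$ and passing through $h'\le\varphi(h')$, whereas the paper argues by contradiction with an arbitrary $a\in L_0$, noting that a maximal element of $\fregiu\varphi^{-1}(A)$ outside $\text{dom }\varphi$ would be strictly below some element of $\varphi^{-1}(A)$.
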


\begin{proof}
By Lemma \ref{lemmaauxjoinirr} all the join-irreducible elements of $L$ are join-irreducible components in $L$ of elements of $L_0$ or of $a_1, \ldots, a_n$. Since, by hypothesis, $a_1, \ldots, a_n$ are joins of join-irreducible components of elements of $L_0$, any join-irreducible components of $a_i$ is a join-irreducible component of a join-irreducible component of an element of $L_0$ and thus it is a join-irreducible component of an element of $L_0$. Therefore any join-irreducible element of $L$ is a join-irreducible component in $L$ of an element of $L_0$.\\
Suppose that there is $x \in P$ such that $x \notin \text{dom } \varphi$, then $x$ corresponds to a join-irreducible element of $L$ which is not a join-irreducible component of any element of $L_0$. Indeed, if it is a join-irreducible component of $a \in L_0$ then $x \in P$ would be a maximal element of the downset $\fregiu \varphi^{-1} (A)$ where $A \subseteq Q$ is the downset relative to $a$, but this is not possible since if $x \in \fregiu \varphi^{-1} (A)$ then $x$ would be less than or equal to an element in $\varphi^{-1} (A) \subseteq \text{dom } \varphi$ which would be different from $x$, this is absurd because $x$ is maximal in $\fregiu \varphi^{-1} (A)$. Therefore, $\text{dom } \varphi=P$ because the existence of an element $x \notin \text{dom } \varphi$ would imply the existence of a join-irreducible element of $L$ which is not a join-irreducible component in $L$ of any element of $L_0$, but this contradicts what we have proven in the first part of this proof.
\end{proof}

\begin{lemma} \label{lemmaauxsplit}
Let $L$ be a CBS and $L_0$ a finite sub-CBS of $L$, $g$ be join-irreducible in $L_0$ and $y_1,y_2 \in L$ be nonzero elements such that 
\begin{equation*}
\begin{split}
     g-y_1 &= y_2 \\ 
     g-y_2 &= y_1
\end{split}
\end{equation*}
Let also $L_0 \langle y_1,y_2 \rangle$ be the sub-CBS of $L$ generated by $L_0$ and $\lbrace y_1,y_2 \rbrace$. We have that:
\begin{enumerate}
\item $g=y_1 \vee y_2$,
\item any join-irreducible $a$ of $L_0$ such that $a \nleq g$ is still join-irreducible in $L_0 \langle y_1,y_2 \rangle$,
\item $y_1,y_2$ are distinct, not in $L_0$ and they are the join-irreducible components of $g$ in $L_0 \langle y_1,y_2 \rangle$.
\end{enumerate}
\end{lemma}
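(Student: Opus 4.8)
I will reduce the whole statement to an explicit normal form for the elements of $M:=L_0\langle y_1,y_2\rangle$.

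\smallskip
\noindent\emph{Easy parts.} Part (1) is exactly the computation of Remark~\ref{remarkprim2}: $y_1=g-y_2\le g$ and $y_2=g-y_1\le g$ give $y_1\vee y_2\le g$, while $g-(y_1\vee y_2)=(g-y_1)-y_2=y_2-y_2=0$ gives $g\le y_1\vee y_2$. For the elementary half of part (3): if $y_1=y_2$ then $g=y_1\vee y_2=y_1$, so $y_2=g-y_1=g-g=0$, absurd; and if one of $y_1,y_2$ were in $L_0$ then so would be the other (since $y_i=g-y_j$ and $L_0$ is closed under $-$), but then $g=y_1\vee y_2$ with $g$ join-irreducible in $L_0$ forces $g=y_1$ or $g=y_2$ by condition~\ref{itm:propjoinirr4}, again giving some $y_i=0$; hence $y_1\ne y_2$ and $y_1,y_2\notin L_0$. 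Note also that $M$ is finite, being finitely generated, and that $M=L_0\langle y_1\rangle$, since $y_2=g-y_1$ is already available.

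\smallskip
\noindent\emph{The normal form.} The plan is to show that every element of $M$ can be written as $b\vee(a_1-y_1)\vee(a_2-y_2)$ with $b,a_1,a_2\in L_0$. Using the identities of Remark~\ref{listsimplefacts} ($-$ distributes over $\vee$ on the left and $x-(z\vee z')=(x-z)-z'$), every element of $M$ is a join of ``monomials'' $x_0-x_1-\cdots-x_t$ with $x_i\in L_0\cup\{y_1,y_2\}$, as in the proof of Lemma~\ref{lemmaauxjoinirr}; collecting the $x_i$ ($i\ge 1$) into one element $c\vee\varepsilon$ with $c\in L_0$ and $\varepsilon\in\{0,y_1,y_2,g\}$, and using $y_1=g-y_2$, $y_1-y_1=0$, $y_1-y_2=y_1$ (and the symmetric facts), each monomial is seen to equal an element of $L_0$, or $a-y_1$, or $a-y_2$ for some $a\in L_0$; taking joins yields the claimed shape. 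The point requiring care — and which I expect to be the \textbf{main obstacle} — is that this forces the family $\{\,b\vee(a_1-y_1)\vee(a_2-y_2)\,\}$ to be closed under $-$; the troublesome cases are $a-(a'-y_1)$ and the ``mixed'' $(a-y_2)-(a'-y_1)$ for $a,a'\in L_0$, which one handles via the decomposition $a'-y_1=(a'-g)\vee(a'^{\,g}-y_1)$ where $a'^{\,g}:=a'-(a'-g)$ satisfies $a'^{\,g}\le g$ (because $a'^{\,g}-g=(a'-g)-(a'-g)=0$), together with the join-irreducibility of $g$ in $L_0$ (so $g-z\in\{0,g\}$ for $z\in L_0$, and any join-irreducible component of $a'^{\,g}$ lies strictly below a component of $g$, hence below $y_1$ or $y_2$), plus an induction on $L_0$.

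\smallskip
\noindent\emph{Parts (2) and (3) from the normal form.} We use the characterisation \ref{itm:propjoinirr5}: a nonzero $h$ is join-irreducible iff $h-c\in\{0,h\}$ for all $c$. For (2): let $a\in L_0$ be join-irreducible in $L_0$ with $a\nleq g$, so $a-g=a$ in $L_0$, hence in $M$. For $c=b\vee(a_1-y_1)\vee(a_2-y_2)\in M$ we have $a-c=\bigl((a-b)-(a_1-y_1)\bigr)-(a_2-y_2)$; since $a$ is join-irreducible in $L_0$, $a-b\in\{0,a\}$; if $a-a_1=a$ then $a-(a_1-y_1)=a$, while if $a-a_1=0$, i.e. $a\le a_1$, then $a=a-g\le a_1-g\le a_1-y_1$ (as $y_1\le g$), so $a-(a_1-y_1)=0$; the same dichotomy applies to the $a_2-y_2$ factor, whence $a-c\in\{0,a\}$ and $a$ is join-irreducible in $M$. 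For (3): for the same $c$, $y_1-c=\bigl((y_1-b)-(a_1-y_1)\bigr)-(a_2-y_2)$; since $g$ is join-irreducible in $L_0$, $y_1-z=(g-z)-y_2\in\{g-y_2,\,0-y_2\}=\{y_1,0\}$ for every $z\in L_0$; and when $g\le a_1$ one rewrites $a_1-y_1=y_2\vee(a_1-g)$ (since $a_1=(a_1-g)\vee g$ and $(a_1-g)-y_1=a_1-g$), so $y_1-(a_1-y_1)=\bigl(y_1-(a_1-g)\bigr)-y_2\in\{0,y_1\}$ by the previous fact, while when $g\nleq a_1$ one has $g-(a_1-y_1)=g$ and hence $y_1-(a_1-y_1)=y_1$; the cross factor $y_1-(a_2-y_2)$ is handled symmetrically. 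Thus $y_1-c\in\{0,y_1\}$, so $y_1$ (and likewise $y_2$) is join-irreducible in $M$. Finally, since $g=y_1\vee y_2$ with $y_1,y_2$ join-irreducible in $M$ and $y_1\nleq y_2$, $y_2\nleq y_1$ (because $y_1-y_2=y_1\ne0$), every join-irreducible of $M$ below $g$ lies below $y_1$ or below $y_2$, and neither of $y_1,y_2$ lies below a strictly larger join-irreducible $\le g$; hence $\{y_1,y_2\}$ is precisely the set of join-irreducible components of $g$ in $M$, completing (3).
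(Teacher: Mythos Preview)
Your treatment of part~(1) and of the elementary half of part~(3) is correct and matches the paper. Your deductions of (2) and (3) \emph{from} the normal form are also correct and rather clean: the dichotomies $a-(a_1-y_1)\in\{0,a\}$ and $y_1-(a_1-y_1)\in\{0,y_1\}$ via the case split $g\le a_1$ versus $g\nleq a_1$ are exactly right.

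The gap is in the normal form itself. You assert that $N=\{\,b\vee(a_1-y_1)\vee(a_2-y_2):b,a_1,a_2\in L_0\,\}$ is closed under $-$, and for the troublesome case $c-(a'-y_1)$ you decompose $a'-y_1=(a'-g)\vee(a'^{\,g}-y_1)$ with $a'^{\,g}\le g$ and then claim that ``any join-irreducible component of $a'^{\,g}$ lies strictly below a component of $g$, hence below $y_1$ or $y_2$''. This last step is false under the hypotheses of the lemma: nothing here forces a join-irreducible $a<g$ of $L_0$ to satisfy $a\le y_1$ or $a\le y_2$. Concretely, take $L_0=\{0,a,g\}$ a three-element chain, let $P$ be two disjoint chains $p_i<q_i<r_i$ ($i=1,2$), set $y_i=\fregiu r_i$ and embed $L_0$ via $a\mapsto\{p_1,p_2\}$, $g\mapsto P$; then $a<g$ is join-irreducible in $L_0$, yet $a\nleq y_1$ and $a\nleq y_2$. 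So your reduction stalls when $a'^{\,g}<g$, and ``an induction on $L_0$'' does not obviously rescue it: applying your decomposition again gives $a'^{\,g}-y_1=(a'^{\,g}-g)\vee((a'^{\,g})^g-y_1)=0\vee(a'^{\,g}-y_1)$, i.e.\ no progress. (It may well be that $N=M$ nonetheless---it is in the example above---but your sketch does not establish it.)

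The paper sidesteps this entirely: instead of a normal form it proves directly, by induction on the length of a pure-difference term $t$ in the generators (using only that such a term is $\le$ its leftmost variable), that $a-x\in\{0,a\}$ for every $a\nleq g$ and every such $x$, and then that $y_i-x\in\{0,y_i\}$; the inductive step for a leftmost variable $b\nleq g$ replaces the innermost subterm $b-c$ by $b$ or $0$ using the already-proved join-irreducibility of $b$ in $M$. This bypasses the need to control $c-(d-y_1)$ for general $c,d\in L_0$. If you want to keep your normal-form route, you would need a genuine argument for that closure; otherwise the paper's term-induction gives a shorter path to the same dichotomies you use.
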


\begin{proof}
Notice that $y_1 \vee y_2=g$ because $y_1 \leq g$ and $y_2 \leq g$ and 
\[
g-(y_1 \vee y_2)=(g-y_1)-y_2=y_2-y_2=0.
\]
Furthermore $y_1,y_2 \notin L_0$. 
Indeed, suppose that $y_1 \in L_0$, then $y_2=g-y_1 \in L_0$, since $g$ is join-irreducible in $L_0$ and $g=y_1 \vee y_2$, we have that $g=y_1$ or $g=y_2$, by hypothesis it follows respectively that $y_2=0$ or $y_1=0$, in both cases we have a contradiction because $y_1,y_2 \neq 0$. Similarly, we obtain that $y_2 \notin L_0$.\\
We also have that $y_1 \neq y_2$. Indeed, suppose $y_1=y_2$, then $g-y_1=y_1$ implies that $g=y_1=0$ and this is absurd.\\ 
We now show that any \textit{join-irreducible $a$ of $L_0$ such that $a \nleq g$ is still join-irreducible in $L_0 \langle y_1,y_2 \rangle$}.\\
Any element of $L_0 \langle y_1,y_2 \rangle$ is the join of repeated differences of $y_1,y_2$ and of join-irreducibles of  $L_0$ different from $g$, this is implied by the identities \eqref{eq:plus} as noted in the proof of Lemma \ref{lemmaauxjoinirr}.\\
It is sufficient to show that for any $x$ obtained as repeated differences of $y_1,y_2$ and join-irreducibles of  $L_0$ different from $g$ we have $a-x=a$ or $a-x=0$. This will ensures that $a$ is join-irreducible in $L \langle y_1,y_2 \rangle$. 
Since $x$ is obtained as repeated differences of $y_1,y_2$ and join-irreducibles $a_1, \ldots, a_n$ of $L_0$ different from $g$, there is a term $t$ in the language of CBSes containing only $-$ and variables expressing $x$ as $t(y_1,y_2,a_1, \ldots,a_n)$. We prove that $a-x=a$ or $a-x=0$ by induction on the length of $t$. If the length is $1$, then $x \in \lbrace y_1, y_2, a_1, \ldots, a_n \rbrace$ \\
If $x \in L_0$ then $a-x=a$ or $a-x=0$ since $a$ is join-irreducible of $L_0$. Moreover $a-y_i=a$ for $i=1,2$. Indeed, $a \geq a-y_i \geq a-g=a$ because $a$ is a join-irreducible of $L_0$ such that $a \nleq g$. \\
Suppose the length of $t$ is greater than $1$. If the leftmost element among the ones whose differences give $x$ is $y_i$ for $i=1,2$ then $a-x=a$ because $a=a-y_i \leq a-x \leq a$.\footnote{Notice the following fact: if a term $t$ in the language of CBSes only contains $-$ (and variables) and $z$ is the leftmost variable in $t$, then the inequality $t \leq z$ is valid in every CBS (this is established by an easy induction on the length of $t$).\label{fnlengthterm}}\\
If the leftmost element among the ones whose differences give $x$ is $b$ join-irreducible of $L_0$ different from $g$. If $b < g$, since $a \nleq g$ yields $a \nleq b$ and thus $a=a-b$ by join-irriducibility of $a$, we obtain that $a=a-b \leq a-x \leq a$. If $b \nleq g$ 
we can obtain $x$ with a smaller number of differences because we can apply the induction hypothesis to replace its subterm of the kind $b-c$ with $b$ or $0$ (with $b$ and $c$ playing respectively the role previously played by $a$ and $x$) and apply again the inductive hypothesis because $x$ can be expressed by a term shorter than $t$.\\ 
Now we prove that \textit{$y_1,y_2$ are join-irreducibles of $L_0 \langle y_1,y_2 \rangle$.} We show that for $y_1$, for $y_2$ is analogous.\\
Again, we show that for all $x \in L_0 \langle y_1,y_2 \rangle$ we have $y_1-x=y_1$ or $y_1-x=0$.
Let $x$ be obtained as repeated differences of $y_1,y_2$ and join-irreducibles of  $L_0$ different from $g$ as above and let us proceed by induction on the number of such differences.\\
First of all
\[
y_1-y_2=(g-y_2)-y_2=(g-y_2)=y_1.
\]
Let $x \in L_0$, if $g \leq x$ then $y_1-x \leq y_1-g=0$, if $g \neq x$, namely $g-x=g$ (recall that $g$ is join-irreducible in $L_0$) then
\[
y_1-x=(g-y_2)-x=(g-x)-y_2=g-y_2=y_1.
\]
If the leftmost element among the ones whose differences give $x$ is $y_i$ for $i=1,2$ then applying the inductive hypothesis, possibly many times, we obtain that $x=0$ or $x=y_i$ and in either case $y_1-x=0$ or $y_1-x=y_1$.\\
Suppose the leftmost element among the ones whose differences give $x$ is $b$ join-irreducible of $L_0$ different from $g$. If $b < g$ then 
\[
y_1=(g-y_2)=(g-b)-y_2=y_1-b \leq y_1
\]
and $y_1=y_1-b \leq y_1-x \leq y_1$.\footnote{Recall footnote \ref{fnlengthterm}.}\\
If $b \nleq g$ we cannot have $b=g$ because $g$ is not join-irreducible, then using what we have proved above (that any join-irreducible $b$ of $L_0$ such that $b \nleq g$ is still join-irreducible in $L_0 \langle y_1,y_2 \rangle$) we obtain $x=b$ or $x=0$ and in either case $y_1-x=0$ or $y_1-x=y_1$.\\
Finally, to prove that \textit{$y_1,y_2$ are the join-irreducible components of $g$ in $L_0 \langle y_1,y_2 \rangle$}, we simply have to notice that $y_1 \nleq y_2$ and $y_2 \nleq y_1$. Just observe that if $y_1 \leq y_2$ then $g=y_1 \vee y_2 = y_2 \notin L_0$ which is absurd. Analogously it cannot be $y_2 \leq y_1$.
\end{proof}

\begin{theorem} \label{theosecondextaxiom}
Let $L$ be a CBS satisfying the Splitting Axiom.\\
Then for any finite sub-CBS $L_0 \subseteq L$ and for any signature $(h_1,h_2,g)$ of the second kind in $L_0$ there exists a primitive couple $(x_1,x_2) \in L^2$ of the second kind over $L_0$ inducing such signature.
\end{theorem}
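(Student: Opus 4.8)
The plan is to read $(x_1,x_2)$ off the Splitting Axiom applied to the signature data, and then to verify Definition~\ref{defprim2} and the inducing property by hand. Since $g$ is join-irreducible in $L_0$, Remark~\ref{listsimplefacts} gives $g^{-}\ll g$, and by hypothesis $g^{-}=h_1\vee h_2$; also $g\neq 0$. So the Splitting Axiom applies to $a:=g$, $b_1:=h_1$, $b_2:=h_2$ and produces nonzero $x_1,x_2\in L$ with $g-x_1=x_2\geq h_2$, $g-x_2=x_1\geq h_1$, $h_2-x_1=h_2-h_1$, $h_1-x_2=h_1-h_2$. Feeding $y_1:=x_1$, $y_2:=x_2$ into Lemma~\ref{lemmaauxsplit} (whose hypotheses $g-x_1=x_2$, $g-x_2=x_1$, $x_1,x_2\neq 0$ now hold) gives at once $g=x_1\vee x_2$, that $x_1\neq x_2$ and $x_1,x_2\notin L_0$, that join-irreducibles of $L_0$ not below $g$ stay join-irreducible in $L_0\langle x_1,x_2\rangle$, and that $x_1,x_2$ are the join-irreducible components of $g$ there. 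This is exactly conditions~\ref{itm:prim2cond1} and~\ref{itm:prim2cond2} of Definition~\ref{defprim2}.

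What remains is condition~\ref{itm:prim2cond3} and the inducing property; both are statements about join-irreducibles $a$ of $L_0$ with $a<g$ (for $a=g$ or $a\nleq g$ both sides of each are trivially false). Such an $a$ satisfies $a\leq g^{-}=h_1\vee h_2$, hence, being join-irreducible, $a\leq h_1$ or $a\leq h_2$, say $a\leq h_1$. If also $a\leq h_2$ then $a\leq x_1\wedge x_2$, so $a-x_1=a-x_2=0\in L_0$ and $a$ lies below both $x_i$, in agreement with $a\leq h_1,h_2$. The work is in the case $a\leq h_1$, $a\nleq h_2$, where $a-x_1=0$ and I must show $a-x_2\in L_0$ and $a\nleq x_2$ (the converse half of the inducing property, $a\leq h_i\Rightarrow a\leq x_i$, being free from $h_i\leq x_i$). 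The instrument is the splitting identity $h_1-x_2=h_1-h_2$ combined with the triangle inequality $c-b\leq(c-e)\vee(e-b)$ valid in every CBS: these transfer the identity down from $h_1$ to $a$, using that $a$ is join-irreducible in $L_0$ and $a\leq h_1-h_2$; when $a$ is a join-irreducible \emph{component} of $h_1$ this delivers $a\nleq x_2$ and $a-x_2=a$ outright, and the mirror identity $h_2-x_1=h_2-h_1$ treats the symmetric case. Once condition~\ref{itm:prim2cond3} holds, $L_0\langle x_1,x_2\rangle$ is a minimal extension of the second kind by Theorem~\ref{theoext2}, and by Theorem~\ref{theoinduce} it induces $(h_1,h_2,g)$ exactly when the predecessor of $x_i$ there equals $h_i$ --- which is what the inducing property asserts.

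The crux, and the part I expect to cost the most, is promoting the component case to arbitrary join-irreducibles lying strictly below a component of $h_1$ or $h_2$: one application of Splitting only forces $h_i\leq x_i$, hence a priori only $h_i\leq x_i^{-}$ for the predecessor $x_i^{-}$ of $x_i$, and the identities $h_i-x_j=h_i-h_j$ must be pushed hard to exclude $x_i^{-}$ strictly exceeding $h_i$. Since a finite CBS is only a distributive lattice --- not Boolean, so that $c\wedge d$ and $c-(c-d)$ genuinely differ --- this exclusion does not come for free, and I would obtain it either by an induction over the join-irreducibles of $L_0$ below $g$ descending from the components of $h_1,h_2$, or, failing that, by iterating the Splitting Axiom inside the enlarging finite sub-CBSes of $L$ already built, trimming the gap between $x_i^{-}$ and $h_i$ at each step until finiteness of $L_0$ exhausts it. This bookkeeping is where the genuine difficulty of the theorem sits, and is where I would concentrate the effort.
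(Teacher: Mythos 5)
Your first paragraph is fine as far as it goes: applying Splitting to $a:=g$, $b_1:=h_1$, $b_2:=h_2$ (legitimate since $h_1\vee h_2=g^-\ll g$) and feeding the output into Lemma~\ref{lemmaauxsplit} does give conditions~\ref{itm:prim2cond1} and~\ref{itm:prim2cond2} of Definition~\ref{defprim2}. But the proof stops exactly where the theorem actually lives, and you say so yourself. The genuine gap is this: a single application of Splitting cannot force the inducing property. The identities $h_2-x_1=h_2-h_1$ and $h_1-x_2=h_1-h_2$ only constrain the join-irreducible \emph{components} of $h_1,h_2$; a join-irreducible $a$ of $L_0$ with $a\leq h_2$, $a\nleq h_1$ but $a$ strictly below a component of $h_2$ may perfectly well lie below $x_1$, and then $a<x_1$ while $a\nleq h_1$, so $(x_1,x_2)$ does not induce $(h_1,h_2,g)$ --- indeed the predecessor of $x_1$ in $L_0\langle x_1,x_2\rangle$ is $(h_1\vee h_2)\wedge x_1$, which in general strictly exceeds $h_1$. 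Your ``promotion from components to arbitrary join-irreducibles'' is therefore not a bookkeeping refinement of the same $x_1,x_2$: the elements produced by one Splitting are, in general, simply not the right ones, and no argument about them can close the gap. Your second fallback (``trim the gap by iterating Splitting'') points in the right direction but is not a proof, and the naive version fails: if you replace $x_1$ by something smaller you destroy $g-x_2=x_1$ and $g=x_1\vee x_2$, i.e.\ condition~\ref{itm:prim2cond2}.

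The paper's proof supplies precisely the machinery you are missing. It inducts on $n=n_1+n_2$, where $n_i$ is the maximal length of a chain of join-irreducibles of $L_0$ ending below $h_i$ and starting not below $h_1\wedge h_2$ (so $n=0$ is exactly your easy case $h_1=h_2=g^-$, where one Splitting does suffice). For $n>0$ it splits $g$ into $y_1,y_2$, checks (via Lemmas~\ref{lemmaauxdual} and~\ref{lemmaauxsplit}) that $(h_1,\,h_2\wedge y_1,\,y_1)$ and $(h_1\wedge y_2,\,h_2,\,y_2)$ are signatures of the second kind in $L_0\langle y_1,y_2\rangle$ with strictly smaller measure, applies the inductive hypothesis to both, and then recombines across the two recursive calls by setting $x_1=y_{11}\vee y_{21}$ and $x_2=y_{12}\vee y_{22}$; it is this join of pieces coming from splitting \emph{both} $y_1$ and $y_2$ that keeps $g-x_1=x_2$, $g-x_2=x_1$ while pushing the predecessors down to $h_1,h_2$, and a separate subcase ($x_1=y_{11}$, $x_2=y_{12}\vee y_2$) is needed when $h_1,h_2$ are comparable, since only one of the two recursive signatures then has smaller measure. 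None of this --- the termination measure, the two meet-adjusted signatures inside the enlarged algebra, or the recombination by joins --- is present in your sketch, so the proposal as it stands does not prove the theorem.
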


\begin{proof}
We follow this strategy: we use the Splitting Axiom to `split' $g$ obtaining two elements, one over $h_1$ and another over $h_2$. When $h_1=h_2$ these two elements form a primitive couple that induces the signature $(h_1,h_2,g)$, but unfortunately this is not true in general because these two elements may be too big. So we may have to `split' these two new elements too in order to obtain other elements which we may have to `split' again and again. This process has to stop after a finite number of steps, intuitively the more the element $h_1 \wedge h_2$ (the meet is taken inside $L_0$) is smaller that $h_1$ and $h_2$, the more the process lasts. Then we accurately partition the set of all these `shards' into two disjoint subsets and we take the joins of these two subsets. In this way we obtain two elements that form a primitive couple that induces the signature $(h_1,h_2,g)$ and we are done.\\
The statement of the Theorem require that, according to the definition of primitive couple inducing a given signature, we need to do the following. Given $h_1,h_2 \in L_0$ and $g$ join-irreducible of $L_0$ such that $h_1 \vee h_2 = g^-$, we have to find $x_1, x_2 \in L$ such that:
\begin{enumerate}
\item $x_1 \neq x_2$ and $x_1,x_2 \notin L_0$, \label{itm:secextsplaxcond1}
\item $g-x_1=x_2$ and $ g-x_2=x_1$ \label{itm:secextsplaxcond2}
\end{enumerate}
and for any $a$ join-irreducible of $L_0$:
\begin{enumerate}[resume]
\item if $a <g$ then $a-x_i \in L_0$ for $i=1,2$, \label{itm:secextsplaxcond3}
\item $a < x_i$ iff $a \leq h_i \;$ for $i=1,2$. \label{itm:secextsplaxcond4}
\end{enumerate}
We recall that $L_0$ is a co-Heyting algebra because it is finite. In particular we can consider meets inside $L_0$ and they distribute with the joins.\\
Let $n_i$ for $i=1,2$ the maximum length of chains of join-irreducible elements of $L_0$
\[
k_1 < k_2 < \cdots <k_{n_i}
\]
such that $k_{n_i} \leq h_i$ and $k_1 \nleq h_j$ with $i \neq j$, or equivalently $k_1 \nleq h_1 \wedge h_2$ where the meet is taken inside $L_0$.\\
Let $n=n_1+n_2$.\\
Intuitively, the natural number $n$ measures how much $h_1 \wedge h_2$ is smaller than $h_1$ and $h_2$.\\
We prove the claim by induction on $n$.\\
\framebox{Case 1: $n=0$.}\\
Then $h_1 \wedge h_2=h_1=h_2=g^-$. We denote $h_1=h_2$ by $h$.\\
Since $h \ll g$, we can apply the splitting axiom to $g,h,h$, hence there exist elements $x_1,x_2 \in L$ different from $0$ such that:
\begin{equation} \label{eq:casen=0splitresult}
\begin{split}
     g-x_1 &= x_2 \geq h \\ 
     g-x_2 &= x_1 \geq h 
\end{split}
\end{equation}
We now show that $(x_1,x_2)$ is a primitive couple of the second kind and induces the signature $(h_1,h_2,g)$:
\begin{enumerate}
\item 
	As shown in Lemma \ref{lemmaauxsplit} we have that $x_1 \neq x_2$ and $x_1,x_2 \notin L_0$.
\item 
	$g-x_1=x_2$ and $ g-x_2=x_1$ follow directly from the splitting axiom, see \eqref{eq:casen=0splitresult}.
\end{enumerate}
Let $a$ be a join-irreducible element of $L_0$, then for $i=1,2$:
\begin{enumerate}[resume]
\item 
	If $a < g$ then $a \leq g^-= h$ thus $a-x_i=0$ because $h \leq x_i$ as a consequence of the splitting axiom, see \eqref{eq:casen=0splitresult}.
\item 
	If $a < x_i$ then $a < g$ because $x_i < g$ by \eqref{eq:casen=0splitresult} and therefore $a \leq g^- =h$.\\
	If $a \leq h$ then, since $x_i \notin L_0$ and $h \leq x_i$, we have $a < x_i$.  
\end{enumerate}
\framebox{Case 2: $n>0$.}\\
Suppose that the claim is true for any $m<n$.\\
Since $h_1 \vee h_2 =g^- \ll g$, we can apply the splitting axiom to $g,h_1,h_2$, hence there exist elements $y_1,y_2 \in L$ different from $0$ such that:
\begin{equation} \label{eq:casen>0splitresult}
\begin{split}
     g-y_1 &= y_2 \geq h_2 \\ 
     g-y_2 &= y_1 \geq h_1 \\
	 h_2-y_1 &=h_2-h_1 \\
     h_1-y_2 &=h_1-h_2 
\end{split}
\end{equation}
Let $L_0 \langle y_1,y_2 \rangle$ be the sub-CBS of $L$ generated by $L_0$ and $\lbrace y_1,y_2 \rbrace$. By local finiteness $L_0 \langle y_1,y_2 \rangle$ is finite and thus a co-Heyting algebra.\\
Before continuing the proof we show a series of claims.
\begin{claim}
The two following triples are signatures of the second kind in $L_0 \langle y_1,y_2 \rangle$:
\begin{equation} \label{triplesinductsplit}
\begin{split}
	(h_1, \; h_2 \wedge y_1, \; y_1)\\
	(h_1 \wedge y_2, \; h_2, \; y_2)
\end{split}
\end{equation}
where the meets are taken inside $L_0 \langle y_1,y_2 \rangle$.
\end{claim}
\begin{claimproof}
By Lemma \ref{lemmaauxsplit} $y_1,y_2 \notin L_0$ are join-irreducibles in $L_0 \langle y_1,y_2 \rangle$.\\
Moreover, in $L_0 \langle y_1,y_2 \rangle$ we have that:
\begin{equation} \label{triplespredecessor}
\begin{split}
	h_1 \vee (h_2 \wedge y_1) &=y_1^- \\
	(h_1 \wedge y_2) \vee h_2 &=y_2^-
\end{split}
\end{equation}
Indeed
\[
h_1 \vee (h_2 \wedge y_1)=(h_1 \vee h_2) \wedge (h_1 \vee y_1)=(h_1 \vee h_2) \wedge y_1
\]
and we have that this coincides with $y_1^-$, the predecessor of $y_1$ in $L_0 \langle y_1,y_2 \rangle$. To show this, observe that, as a consequence of Lemma \ref{lemmaauxdual}, the inclusion $L_0 \hookrightarrow L_0 \langle y_1,y_2 \rangle$ is dual to a surjective $\mathbf{P}$-morphism $\varphi:P \to Q$ with $\text{dom } \varphi= P$. Recall that $Q$ and $P$ are the posets of the join-irreducibles respectively of $L_0$ and $L_0 \langle y_1,y_2 \rangle$. Notice that the preimage of an element $q$ of $Q$, i.e. a join-irreducible element of $L_0$ consists of the join-irreducible components of such element inside $L_0 \langle y_1,y_2 \rangle$. This is a consequence of the fact that $\fregiu \varphi^{-1}(q) \subseteq P$ corresponds to $q$ as element of $L_0 \langle y_1,y_2 \rangle$ and the set of maximal elements of  $\fregiu \varphi^{-1}(q)$ is exactly the preimage of $q$ because $\varphi$ preserves the strict order.\\
Then $\varphi^{-1}(g)= \lbrace y_1, y_2 \rbrace$ because $y_1,y_2$ are the join-irreducible components  of $g$ in $L_0 \langle y_1,y_2 \rangle$ by Lemma \ref{lemmaauxsplit}.\\
Notice that for any downset $D \subseteq Q$ we have $\fregiu \varphi^{-1}(D)=\varphi^{-1}(D)$ because $\text{dom } \varphi= P$ and  $\varphi^{-1}(D)$ is a downset in the domain of $\varphi$.\\
Since $\fregiu \varphi^{-1}(\fregiu g)=\varphi^{-1}(\fregiu g)=\fregiu y_1 \cup \fregiu y_2$ we have:
\[
\fregiu \varphi^{-1}(\fregiu g \setminus \lbrace g \rbrace)=\varphi^{-1}(\fregiu g \setminus \lbrace g \rbrace)=(\fregiu y_1 \cup \fregiu y_2) \setminus \lbrace y_1,y_2 \rbrace
\]
Thus $(h_1 \vee h_2)\wedge y_1=g^- \wedge y_1$ is equal to $y_1^-$ because of the following equations
\[
(\fregiu y_1 \cup \fregiu y_2) \setminus \lbrace y_1,y_2 \rbrace \cap \fregiu y_1 = \fregiu y_1 \setminus  \lbrace y_1,y_2 \rbrace= \fregiu y_1 \setminus  \lbrace y_1 \rbrace
\]
To prove that $(h_1 \wedge y_2) \vee h_2 =y_2^-$ the reasoning is analogous.
\end{claimproof}
Notice that $h_1 \wedge h_2$ is the same taken in $L_0$ and in $L_0 \langle y_1,y_2 \rangle$ because by Lemma \ref{lemmaauxdual} the inclusion $L_0 \hookrightarrow L_0 \langle y_1,y_2 \rangle$ preserves meets.
\begin{claim} 
For $i=1,2$ the maximum length of chains of join-irreducibles of $L_0 \langle y_1,y_2 \rangle$ less than or equal to $h_i$ that are not less than or equal to $h_1 \wedge h_2$ is the same as $n_i$ defined above (recall that $n_i$ is defined taking the join-irreducibles of $L_0$).
\end{claim}
\begin{claimproof}
Suppose there exists a chain of join-irreducibles in $L_0 \langle y_1,y_2 \rangle$
\[
k_1 < k_2 < \cdots <k_r
\]
such that $k_r \leq h_i$ and $k_1 \nleq h_1 \wedge h_2$. Let, as above, $\varphi:P \to Q$ the surjective total $\mathbf{P}$-morphism dual to the inclusion $L_0 \hookrightarrow L_0 \langle y_1,y_2 \rangle$, then
\[
\varphi(k_1) < \varphi(k_2) < \cdots < \varphi(k_r)
\]
is a chain of join-irreducibles in $L_0$ such that $\varphi(k_r) \leq h_i$ and $\varphi(k_1) \nleq h_1 \wedge h_2$.
\footnote{We remind that $\mathbf{P}$-morphisms preserve the strict order.}\\
On the other hand a chain of join-irreducibles in $L_0$
\[
b_1 < b_2 < \cdots <b_r
\]
such that $b_r \leq h_i$ and $b_1 \nleq h_1 \wedge h_2$ can be lifted to a chain of join-irreducibles of $L_0 \langle y_1,y_2 \rangle$
\[
k_1 < k_2 < \cdots <k_r
\]
such that $\varphi(k_s)=b_s$ for $s=1, \ldots, r$ using the fact that $\varphi$ is a surjective $\mathbf{P}$-morphism, we obtain that $k_r \leq h_i$ and $k_1 \nleq h_1 \wedge h_2$.
\end{claimproof}
\begin{claim}
If $h_2 \nleq h_1$ then the maximum length of chains of join-irreducibles of $L_0 \langle y_1,y_2 \rangle$ less than or equal to $h_2 \wedge y_1$ that are not less than or equal to $h_1 \wedge h_2= h_1 \wedge (h_2 \wedge y_1)$ is strictly smaller than $n_2$ (notice that $n_2 \neq 0$ because $h_2 \nleq h_1$).\\
When $h_1 \nleq h_2$ in the same way we obtain the analogous result switching $y_1$ with $y_2$ and $h_1$ with $h_2$.
\end{claim}
\begin{claimproof}
Suppose there exists a chain of join-irreducibles in $L_0 \langle y_1,y_2 \rangle$
\[
k_1 < k_2 < \cdots <k_{n_2}
\]
such that $k_{n_2} \leq h_2 \wedge y_1$ and $k_1 \nleq h_1 \wedge h_2$. Notice that this chain is not empty because $n_2 \neq 0$. We have that $k_{n_2}$ is not a join-irreducible component of $h_2$ in $L_0 \langle y_1,y_2 \rangle$. Indeed, $k_{n_2} \leq y_1$ and $y_1$ is not greater than or equal to any join-irreducible component of $h_2$ which is not less than or equal to $h_1 \wedge h_2$ because $h_2 - y_1=h_2-h_1=h_2-(h_1 \wedge h_2)$. Thus there would exist a continuation of such chain given by $k_{n_2+1}$ join-irreducible component of $h_2$ in $L_0 \langle y_1,y_2 \rangle$, but this is absurd because we have proved in Claim 2 that $n_2$ is the maximum length of such chains.
\end{claimproof}
We can now apply the inductive hypothesis, to do so we shall consider different cases.\\
\framebox{Subcase 2.1: $h_1,h_2$ incomparables.} First, we consider the case in which $h_1 \nleq h_2$ and $h_2 \nleq h_1$, i.e. $h_1,h_2$ are incomparable.\\
What we have proved in Claim 3 implies that the sum of the lengths of the chains considered above for either of the two signatures \eqref{triplesinductsplit} is strictly smaller than $n$. Therefore we can apply the inductive hypothesis on both the two signatures \eqref{triplesinductsplit} considered inside $L_0 \langle y_1,y_2 \rangle$ to obtain two primitive couples $(y_{11}, y_{12}) \in L^2$ and $(y_{21}, y_{22}) \in L^2$ of the second kind over $L_0 \langle y_1,y_2 \rangle$ such that they induce respectively the signatures $(h_1, h_2 \wedge y_1, y_1)$ and $(h_1 \wedge y_2,  h_2, y_2)$. This means that:
\begin{enumerate}
\item
	$y_{11} \neq y_{12}$ and $y_{11},y_{12} \notin L_0 \langle y_1,y_2 \rangle$, \label{itm:y1icond1}
\item 
	$y_1- y_{11}=y_{12}$ and $y_1- y_{12}=y_{11}$ \label{itm:y1icond2}
\end{enumerate}
and for any $a$ join-irreducible of $L_0 \langle y_1,y_2 \rangle$:
\begin{enumerate}[resume]
\item 
	if $a<y_1$ then $a-y_{1i} \in L_0 \langle y_1,y_2 \rangle \;$ for $i=1,2$, \label{itm:y1icond3}
\item 
	$a < y_{11}$ iff $a \leq h_1 \;$ and $\; a < y_{12}$ iff $a \leq (h_2 \wedge y_1)$. \label{itm:y1icond4}
\end{enumerate}
furthermore
\begin{enumerate}
\item
	 $y_{21} \neq y_{22}$ and $y_{21},y_{22} \notin L_0 \langle y_1,y_2 \rangle$, \label{itm:y2icond1}
\item 
	$y_2- y_{21}=y_{22}$ and $y_2- y_{22}=y_{21}$ \label{itm:y2icond2}
\end{enumerate}
and for any $a$ join-irreducible of $L_0 \langle y_1,y_2 \rangle$:
\begin{enumerate}[resume]
\item 
	if $a<y_2$ then $a-y_{2i} \in L_0 \langle y_1,y_2 \rangle \;$ for $i=1,2$, \label{itm:y2icond3}
\item 
	$a < y_{21}$ iff $a \leq (h_1 \wedge y_2) \;$ and $\; a < y_{22}$ iff $a \leq h_2$. \label{itm:y2icond4}
\end{enumerate}
Notice that properties \ref{itm:y1icond4} of $y_{11}, y_{12}$ and \ref{itm:y2icond4} of $y_{21}, y_{22}$ actually hold for any $a \in L_0 \langle y_1,y_2 \rangle$ since any element in a finite CBS is the join of join-irreducible elements. Observe also that for $a \in L_0$ we have $ a \leq y_{ij}$ iff $a < y_{ij}$ because $y_{ij} \notin L_0$.\\
We want to prove that $x_1=y_{11} \vee y_{21}$ and $x_2=y_{12} \vee y_{22}$ are the two elements of $L$ we are looking for, i.e. $(x_1,x_2)$ is a primitive couple of the second kind over $L_0$ inducing the signature $(h_1,h_2,g)$.\\
First of all, we observe that
\begin{equation} \label{eq:yiminusyjkfirstcase}
y_1-y_{2i}=y_1 \quad \text{ and } \quad y_2-y_{1i}=y_2 \qquad \text{for } i=1,2
\end{equation}
Indeed,
\begin{equation} \label{eq:yiminusyjkfirstcaseproof}
y_1=g-y_2=(g-y_2)-y_2=y_1-y_2 \leq y_1-y_{2i} \leq y_1
\end{equation}
the second equation is shown analogously. \eqref{eq:yiminusyjkfirstcaseproof} also shows that $y_1-y_2=y_1$ and $y_2-y_1=y_2$.\\
Moreover
\begin{equation} \label{eq:yihminusyjk}
y_{1i}-y_{2j}=y_{1i} \quad \text{ and } \quad y_{2i}-y_{1j}=y_{2i} \qquad \text{for } i,j=1,2
\end{equation}
Indeed, 
\[
y_{11}=y_1-y_{12}=(y_1-y_2)-y_{12}=(y_1-y_{12})-y_2=y_{11}-y_2 \leq y_{11} - y_{21} \leq y_{11}
\]
and thus $y_{11} - y_{21}=y_{11}$, the remaining cases are analogous.\\
Notice the following fact about the extensions generated by the $y_{ij}$'s:
\begin{claim}\label{remarkdoubleextyij}
The two extensions of finite CBSes given by $L_0 \langle y_1,y_2 \rangle \subseteq L_0 \langle y_1,y_2,y_{11},y_{12} \rangle \subseteq L_0 \langle y_{ij} \: | \: i,j=1,2 \rangle$ are both minimal of the second kind. This implies that any $b$ join-irreducible of $L_0 \langle y_1,y_2 \rangle$ different from $y_1,y_2$ is still join-irreducible in $L_0 \langle y_{ij} \: | \: i,j=1,2 \rangle$.
\end{claim}
\begin{claimproof}
It suffices to prove that $(y_{21}, y_{22})$ is a primitive couple of the second kind over $L_0 \langle y_1,y_2, y_{11}, y_{12} \rangle$.\\
First of all, as a consequence of Lemma \ref{lemmaauxsplit}, $y_1,y_2$ are join-irreducible in $L_0 \langle y_1,y_2 \rangle$, thus $y_2$ is join-irreducible in $L_0 \langle y_1,y_2, y_{11}, y_{12} \rangle$.
\begin{enumerate}
\item 
	$y_{21} \neq y_{22}$ by property \ref{itm:y2icond1} of $y_{21}, y_{22}$.\\
	$y_{21},y_{22} \in L \setminus L_0 \langle y_1,y_2, y_{11}, y_{12} \rangle$. Indeed, if $y_{21} \in L_0 \langle y_1,y_2, y_{11}, y_{12} \rangle$ then $y_{22}=y_2-y_{21} \in L_0 \langle y_1,y_2, y_{11}, y_{12} \rangle$ and vice versa. In that case $y_2=y_{21} \vee y_{22} \in L_0 \langle y_1,y_2, y_{11}, y_{12} \rangle$ with $y_{21}, y_{22} \neq y_2$ because they are not in $L_0 \langle y_1,y_2 \rangle$, but this is absurd because $y_2$ is join-irreducible.
\item
	$y_2-y_{21}=y_{22}$ and $y_2-y_{22}=y_{21}$ by property \ref{itm:y2icond2} of $y_{21}, y_{22}$.
\item
	Since $L_0 \langle y_1,y_2, y_{11}, y_{12} \rangle$ is a minimal finite extension of $L_0 \langle y_1,y_2 \rangle$, its join-irreducibles are $y_{11}, y_{12}$ and the join-irreducibles of $L_0 \langle y_1,y_2 \rangle$ except $y_1$. If $a$ is a join-irreducible of $L_0 \langle y_1,y_2, y_{11}, y_{12} \rangle$ such that $a < y_2$ then $a$ is join-irreducible in $L_0 \langle y_1,y_2 \rangle$ because $a \neq y_{11}, y_{12}$ since $y_{11}, y_{12} \nless y_2$: indeed $y_{1i}-y_2=y_{1i}-(y_{21} \vee y_{22})=(y_{1i}-y_{21})-y_{22}=y_{1i} \neq 0$ by \eqref{eq:yihminusyjk}.  Thus $a-y_{2i} \in L_0 \langle y_1,y_2 \rangle$ by property \ref{itm:y2icond3} of of $y_{21}, y_{22}$.
\end{enumerate}
\end{claimproof}
Moreover, we observe that
\begin{equation} \label{eq:gminusxifirstcase}
g-x_1=x_2 \quad \text{ and } \quad g-x_2=x_1
\end{equation}
because thanks to equations \eqref{eq:yiminusyjkfirstcase} we have:
\begin{equation*} 
	\begin{split}
		g-x_1=(y_1 \vee y_2)-(y_{11} \vee y_{21})=((y_1-y_{21})-y_{11}) \vee ((y_2 - y_{11})-y_{21})\\
		=(y_1-y_{11}) \vee (y_2-y_{21})=y_{12} \vee y_{22}=x_2;
	\end{split}
	\end{equation*}
showing the second equation of \eqref{eq:gminusxifirstcase} is analogous.\\
We are now ready to show that $(x_1,x_2)$ is a primitive couple of the second kind over $L_0$ inducing the signature $(h_1,h_2,g)$.
\begin{enumerate}
\item 
	Equations \eqref{eq:gminusxifirstcase} imply that $g=x_1 \vee x_2$, thus if $x_1=x_2$ then $x_1=g=0$ but this is absurd because $x_1,x_2 \neq 0$ since $y_{11}, y_{12}, y_{21}, y_{22} \neq 0$ because they are not in $L_0$.\\
Furthermore $x_1,x_2 \notin L_0$; this is because $g$ is join-irreducible in $L_0$ and $g-x_1=x_2$ and $g-x_2=x_1$ are different from $0$ and $g$.
\item
	See equations \eqref{eq:gminusxifirstcase}.
\end{enumerate}
Let now $a$ be a join-irreducible element of $L_0$ and $i=1,2$:
\begin{enumerate}[resume]
\item 
	If $a < g$ then $a \leq g^-=h_1 \vee h_2$ and $a \leq h_1$ or $a \leq h_2$
	\begin{itemize}
	\item
		If $a \leq h_1$ then $a-x_1=0$ because $h_1 \leq y_{11} \leq x_1$ by property \ref{itm:y1icond4} of $y_{11}$.
	\item
		If $a \leq h_2$ and $a \nleq h_1$ we want to prove that $a-x_1=a$.
		\begin{claim}
		The join-irreducible components of $a$ in $L_0 \langle y_1,y_2 \rangle$ coincide with the join-irreducible component of $a$ in $L_0 \langle y_{ij} \: | \: i,j=1,2 \rangle$.
		\end{claim}
		\begin{claimproof}
		Since $a$ is the join of its join-irreducible components in $L_0 \langle y_1,y_2 \rangle$, it is sufficient to prove that any join-irreducible component $b$ of $a$ in $L_0 \langle y_1,y_2 \rangle$ is join-irreducible in $L_0 \langle y_{ij} \: | \: i,j=1,2 \rangle$. We have $b \neq y_1,y_2$ because $b \leq h_2$ and $y_1,y_2 \nleq h_2$ since if $y_i \leq h_2$ then $0=y_i -h_2=(g-y_j)-h_2=(g-h_2)-y_j=g-y_j=y_i$  with $i \neq j$ which is absurd. Thus by Claim 4 we have that $b$ is also join-irreducible in $L_0 \langle y_{ij} \: | \: i,j=1,2 \rangle$.
		\end{claimproof}
		Since $a$ is join-irreducible of $L_0$ and $a \nleq h_1$ it is $a-h_1=a$. For any $b$ join-irreducible component of $a$ in $L_0 \langle y_1,y_2 \rangle$ we have $b \nleq h_1$ because $a-h_1=a$ means that $h_1$ is not greater than or equal to any join-irreducible component of $a$. Since $b \nleq h_1$ and in particular $b \nleq h_1 \wedge y_2$ then property \ref{itm:y1icond4} of $y_{11}$ and property \ref{itm:y2icond4} of $y_{21}$ imply that $b \nleq y_{11}, y_{21}$. Therefore $b \nleq y_{11} \vee y_{21}=x_1$ because $b$ is join-irreducible in $L_0 \langle y_{ij} \: | \: i,j=1,2 \rangle$. This implies that $a-x_1=a$ because $x_1$ is not greater than or equal to any join-irreducible component of $a$ in $L_0 \langle y_{ij} \: | \: i,j=1,2 \rangle$.\\
	For $a-x_2$ the property is checked in an analogous way.
	\end{itemize}
\item 
	If $a \leq h_i$ then $a < y_{ii} \leq x_i$ by property \ref{itm:y1icond4} of $y_{11}$ and property \ref{itm:y2icond4} of $y_{22}$\\
	If $a < x_1$ then $a < g$ by \eqref{eq:gminusxifirstcase} and $a \leq h_1 \vee h_2=g^-$. Let $b$ be a join-irreducible component of $a$ in $L_0 \langle y_1,y_2 \rangle$. We claim that $b \leq h_1$. We have that $b$ is join-irreducible in $L_0 \langle y_1,y_2 \rangle$ and $b \neq y_1,y_2$ because  $b < x_ 1$ and $y_1,y_2 \nless x_1$. Indeed, by equations \eqref{eq:yihminusyjk}, we have:
	\begin{equation}
	\begin{split}
	y_2-x_1=(y_2-y_{21})-y_{11}=y_{22}-y_{11}=y_{22} \neq 0 \\
	y_1-x_1=(y_1-y_{11})-y_{21}=y_{12}-y_{21}=y_{12} \neq 0 \\
	\end{split}
	\end{equation}
	Suppose $b \nleq h_1$, then by property \ref{itm:y1icond4} of $y_{11}$ we would get $b \nless y_{11}$, furthermore $b \nleq h_1 \wedge y_2$ and by property \ref{itm:y2icond4} of $y_{21}$ we would get $b \nless y_{21}$. Then $b$ would also be join-irreducible in $L_0 \langle y_{ij} \: | \: i,j=1,2 \rangle$ (see Claim 4). Therefore $b \nless y_{11} \vee y_{21}=x_1$ but this is absurd. Thus for any $b$ join-irreducible component of $a$ we have $b \leq h_1$ and hence $a \leq h_1$.\\ 
	For $x_2$ the reasoning is analogous
\end{enumerate}
\framebox{Subcase 2.2: $h_1,h_2$ comparables.} The remaining cases are when $h_1 < h_2$ or $h_2 < h_1$ since $h_1=h_2$ only occurs when $n=0$.\\
We now consider the case $h_1 < h_2$, for $h_2 < h_1$ the reasoning is analogous.\\
In this case, by equations \eqref{triplespredecessor}, we have $y_2^-=(h_1 \wedge y_2) \vee h_2=h_2$ in $L_0 \langle y_1,y_2 \rangle$. Notice that we can apply the inductive hypothesis only on the first signature in \eqref{triplesinductsplit} because $h_2 \nleq h_1$ but it is not true that $h_1 \nleq h_2$. Then we obtain the existence of $y_{11},y_{12}$ with the same properties \ref{itm:y1icond1}, \ref{itm:y1icond2}, \ref{itm:y1icond3}, \ref{itm:y1icond4} as in the previous subcase. We define $x_1 = y_{11}$ and $x_2=y_{12} \vee y_2$.\\
We want to prove that $x_1,x_2$ form a primitive couple $(x_1,x_2)$ of the second kind over $L_0$ inducing the signature $(h_1,h_2,g)$.\\
We have
\begin{equation} \label{eq:gminusxisecondcase}
g-x_1=x_2 \quad \text{ and } \quad g-x_2=x_1
\end{equation}
because
\begin{equation*}
\begin{split}
g-x_1=(y_1 \vee y_2)-y_{11}=(y_1-y_{11}) \vee (y_2-y_{11})=y_{12} \vee y_2=x_2 \\
g-x_2=(y_1 \vee y_2)-(y_{12} \vee y_2)=((y_1-y_2)-y_{12}) \vee ((y_2-y_2)-y_{12})=\\
y_1-y_{12}=y_{11}=x_1
\end{split}
\end{equation*}
We have used that $y_2 -y_{11}=y_2$, it is proven in the same way as \eqref{eq:yiminusyjkfirstcase} above.
\begin{enumerate}
\item 
	Equations \eqref{eq:gminusxisecondcase} imply $g=x_1 \vee x_2$, thus if $x_1=x_2$ then $x_1=g=0$ but this is absurd because $x_1,x_2 \neq 0$ since $y_{11}, y_{12}, y_2 \neq 0$ because they are not in $L_0$.\\
Furthermore $x_1,x_2 \notin L_0$ since $g$ is join-irreducible in $L_0$ and $g-x_1=x_2$ and $g-x_2=x_1$ are different from $0$ and $g$.
\item
	See equations \eqref{eq:gminusxisecondcase}.
\end{enumerate}
Let now $a$ be a join-irreducible element of $L_0$:
\begin{enumerate}[resume]
\item 
	If $a < g$ then $a \leq g^-= h_1 \vee h_2=h_2$.
	\begin{itemize}
	\item 
		If $a \leq h_1$ then $a-x_1=0$ because $h_1 \leq y_{11}=x_1$, moreover $a \leq h_1 < h_2 \leq y_2 < x_2$ imply $a-x_2=0$.
	\item 
		If $a \leq h_2$ and $a \nleq h_1$ clearly $a-x_2=0$ because $a \leq h_2 \leq y_2 \leq x_2$. We want to prove that $a-x_1=a$.
		\begin{claim}
		The join-irreducible components of $a$ in $L_0 \langle y_1,y_2 \rangle$ coincide with the join-irreducible components of $a$ in $L_0 \langle y_1,y_2, y_{11}, y_{12} \rangle$.
		\end{claim}
		\begin{claimproof}
		Since $a$ is the join of its join-irreducible components in $L_0 \langle y_1,y_2 \rangle$, it is sufficient to prove that any join-irreducible component $b$ of $a$ in $L_0 \langle y_1,y_2 \rangle$ is join-irreducible in $L_0 \langle y_1,y_2, y_{11}, y_{12} \rangle$. Pick such $a,b$. We have $b \neq y_1$ because $b \leq h_2$ and $y_1 \nleq h_2$. Notice that $y_1 \nleq h_2$ since $y_1 \leq h_2$ would imply $0=y_1-h_2=(g-y_2)-h_2=(g-h_2)-y_2=g-y_2=y_1$ which is absurd. Then we have that $b$ is also join-irreducible in $L_0 \langle y_1,y_2, y_{11}, y_{12} \rangle$, this follows from the fact that $L_0 \langle y_1,y_2, y_{11}, y_{12} \rangle$ is a minimal extension of $L_0 \langle y_1,y_2 \rangle$ of the second kind, which implies that the join-irreducibles in $L_0 \langle y_1,y_2 \rangle$ different from $y_1$ are still join-irreducible in $L_0 \langle y_1,y_2, y_{11}, y_{12} \rangle$.
		\end{claimproof}
		Since $a$ is join-irreducible of $L_0$ and $a \nleq h_1$ it is $a-h_1=a$. For any $b$ join-irreducible component of $a$ in $L_0 \langle y_1,y_2 \rangle$ we have $b \nleq h_1$ because $a-h_1=a$ means that $h_1$ is not greater than or equal to any join-irreducible component of $a$. Since $b \nleq h_1$ by property \ref{itm:y1icond4} of $y_{11}$ we have that $b \nless y_{11}=x_1$, therefore $b \nleq x_1$ because $b \neq y_{11}$ since $y_{11} \notin L_0 \langle y_1,y_2 \rangle$ . This implies that $a-x_1=a$ because $x_1$ is not greater than or equal to any join-irreducible component of $a$ in $L_0 \langle y_1,y_2, y_{11}, y_{12} \rangle$.
	\end{itemize}
\item 
	By property \ref{itm:y1icond4} of $y_{11}$ we have that $a \leq h_1$ iff $a < y_{11}=x_1$.\\
	If $a \leq h_2$ then $a \leq h_2 < y_2 \leq x_2$ since $h_2=y_2^-$.\\
	If $a < x_2$, since $ x_2 = y_{12} \vee y_2 \leq y_1 \vee y_2=g$ then $a < g$ and $a \leq g^-=h_1 \vee h_2=h_2$.
\end{enumerate}
\end{proof}

\subsection{Density axioms}

\noindent\textbf{[Density 1 Axiom]} For every $c$ there exists $b \neq 0$ such that $c \ll b$

\begin{theorem}
Any existentially closed CBS satisfies the Density 1 Axiom.
\end{theorem}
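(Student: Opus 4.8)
The plan is to apply Lemma \ref{lemdjcobrouw}, with $\phi(c,b)$ the quantifier-free formula expressing "$b\neq 0$ and $c\ll b$" (i.e. $b\neq 0 \wedge c\leq b \wedge b-c=b$) and with a trivially true $\theta$. This reduces the statement to the following: for every finite CBS $L_0$ and every $c\in L_0$ there is a finite extension $L_1\supseteq L_0$ containing some $b\neq 0$ with $c\ll b$.

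To produce such an $L_1$ I would use the machinery of Section \ref{section:minimalfiniteextensions}. The key observation is that $(c,\emptyset)$ is a signature of the first kind in $L_0$: we have $c\in L_0$, the empty set is (vacuously) a set of two-by-two incomparable join-irreducibles, and the requirement "$c<g$ for all $g\in\emptyset$" is vacuous. By Theorem \ref{theosign} this signature is realized by a minimal finite extension $L_0\hookrightarrow L_0\langle x\rangle$, and by Theorems \ref{theoext1} and \ref{theoinduce} the generator $x$ is a primitive element of the first kind over $L_0$ inducing $(c,\emptyset)$; that is, for every join-irreducible $a$ of $L_0$ we have $a<x$ iff $a\leq c$, and $x<a$ never holds.

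I then claim $b:=x$ works. First $x\neq 0$, since $x\notin L_0$ by condition \ref{itm:prim1cond1} of Definition \ref{defprim1}. Next, in the finite CBS $L_0\langle x\rangle$ the element $x$ is join-irreducible (this is established inside the proof of Theorem \ref{theoext1}), so it has a unique predecessor $x^-=\bigvee_{a<x}a$. Every join-irreducible of $L_0\langle x\rangle$ strictly below $x$ lies in $L_0$ (the only new join-irreducible is $x$ itself), hence is $\leq c$ by the description of the induced signature; conversely each join-irreducible component of $c$ is a join-irreducible $a$ of $L_0$ with $a\leq c$, hence $a<x$. Therefore $x^-=c$. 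Since $x^-\ll x$ for any join-irreducible $x$ (Remark \ref{listsimplefacts}), we conclude $c\ll x$, i.e. $c\leq x$ and $x-c=x$. This gives $L_0\langle x\rangle\vDash\phi(c,x)$, as required.

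Applying Lemma \ref{lemdjcobrouw} we obtain that every existentially closed CBS satisfies $\forall c\,\exists b\,(b\neq 0\wedge c\ll b)$, which is precisely the Density 1 Axiom. There is no substantial obstacle in this argument; the only point needing a little attention is the identification $x^-=c$, which one could alternatively verify dually via Lemma \ref{lemmasign}: the extension is dual to the minimal $\mathbf{P}$-morphism of the first kind obtained from the pair $(D,U)=(C,\emptyset)$, where $C\subseteq Q$ is the downset of the dual poset $Q$ corresponding to $c$, so the new join-irreducible corresponds to $C\cup\{x\}$ and its predecessor to $C$ itself.
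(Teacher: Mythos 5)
Your proof is correct, and it follows the paper's overall strategy (reduce via Lemma \ref{lemdjcobrouw} to producing a single finite extension of an arbitrary finite $L_0$), but the extension you build is different from the paper's. The paper argues directly by duality: it adds one new \emph{maximum} element $m$ to the dual poset $P_0$, so that the corresponding $b$ satisfies $1_{L_0}\ll b$, hence in particular $c\ll b$; this is completely elementary and does not use any of the Section \ref{section:minimalfiniteextensions} machinery. You instead realize the signature $(c,\emptyset)$ of the first kind, i.e.\ you add one new point lying exactly above the downset $C$, and then check $x^-=c$, so $c\ll x$ by Remark \ref{listsimplefacts}; your closing dual description via Lemma \ref{lemmasign} with $(D,U)=(C,\emptyset)$ is exactly right and is in fact the quickest way to see it. Both routes work; the paper's construction is more economical (no appeal to signatures, primitives or Theorems \ref{theosign}, \ref{theoinduce}), while yours produces a tighter witness (an $x$ whose predecessor is precisely $c$, rather than an element above all of $L_0$) and illustrates how the Density 1 axiom is just the realization of signatures $(c,\emptyset)$. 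One small citation point: to get that the generator of the minimal extension supplied by Theorem \ref{theosign} is a primitive element you should invoke Theorem \ref{theominimalprimitive} (Theorem \ref{theoext1} is the converse direction); this does not affect the argument.
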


\begin{proof}
It is sufficient to show, by Lemma \ref{lemdjcobrouw}, that for any finite CBS $L_0$ and $c \in L_0$ there exists a finite extension $L_0 \subseteq L$ with $b \in L$ different from $0$ such that $c \ll b$.\\
Let $P_0$ be the finite poset dual to $L_0$ and $C$ its downset corresponding to $c$.\\
Let $P$ be the poset obtained by $P_0$ by adding a new maximum element $m \in P$ such that $m \geq p$ for any $p \in P_0$ and $\varphi:P \to P_0$ a surjective $\mathbf{P}$-morphism such that $\text{dom } \varphi=P_0$ and it is the identity on its domain. Then $C \ll \fregiu m$ and take as $L$ the CBS dual to $P$ and $b \in L$ corresponding to $\fregiu m$.
\end{proof}

\noindent\textbf{[Density 2 Axiom]} For every $c,a_1,a_2,d$ such that $a_1,a_2 \neq 0$, $c \ll a_1$, $c \ll a_2$ and $a_1-d=a_1$, $a_2-d=a_2$ there exists an element $b$ different from $0$ such that:
\begin{equation*}
\begin{split}
     c \ll b\\ 
     b \ll a_1 \\
     b \ll a_2 \\
     b-d=b
\end{split}
\end{equation*}

\begin{theorem}
Any existentially closed CBS satisfies the Density 2 Axiom.
\end{theorem}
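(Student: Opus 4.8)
The plan is to appeal once more to Lemma~\ref{lemdjcobrouw}: it suffices to prove that for every finite CBS $L_0$ and all $c,a_1,a_2,d\in L_0$ with $a_1,a_2\neq 0$, $c\ll a_1$, $c\ll a_2$, $a_1-d=a_1$ and $a_2-d=a_2$, there is a finite extension $L_0\subseteq L$ containing some $b\neq 0$ with $c\ll b$, $b\ll a_1$, $b\ll a_2$ and $b-d=b$. We argue dually. Let $P_0$ be the poset of join-irreducibles of $L_0$ and let $C,A_1,A_2,D$ be the downsets of $P_0$ corresponding to $c,a_1,a_2,d$. Using the description of $-$ from Remark~\ref{listsimplefacts}, the hypotheses become: $C\subseteq A_1$, $C\subseteq A_2$; no maximal element of $A_1$ or of $A_2$ lies in $C$ (this is exactly $c\ll a_i$); no maximal element of $A_1$ or of $A_2$ lies in $D$ (this is exactly $a_i-d=a_i$); and $A_1,A_2\neq\emptyset$.

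First I would treat the degenerate case $c=0$ (that is, $C=\emptyset$) separately: pick a maximal element $q_1$ of $A_1$ and a maximal element $q_2$ of $A_2$, adjoin to $P_0$ one new point $m$ with $m<q_1$ and $m<q_2$ and nothing below it, let $L$ be dual to the resulting poset, and take $b$ dual to $\fregiu m$. Now assume $C\neq\emptyset$. For every maximal element $p$ of $C$ choose a maximal element $q_1(p)$ of $A_1$ and a maximal element $q_2(p)$ of $A_2$ with $p\le q_1(p)$ and $p\le q_2(p)$ (they exist since $p\in A_1\cap A_2$, and are strictly above $p$ because $p\in C$ while no maximal element of $A_i$ lies in $C$). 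Form $P$ from $P_0$ by adjoining, for each maximal $p$ of $C$, a fresh point $m_p$ with $p<m_p$, $m_p<q_1(p)$, $m_p<q_2(p)$, the $m_p$'s pairwise incomparable, closing under transitivity; let $\varphi\colon P\to P_0$ be the identity on $P_0$ and undefined on the $m_p$'s. Adjoining the $m_p$'s creates no new comparability among old points --- any such would be of the form $c'<q'$ with $c'\le p$ and $q'\ge q_i(p)$ for some maximal $p$ of $C$ and some $i$, but then $c'\le p\le q_i(p)\le q'$ already holds in $P_0$ --- so $\varphi$ is a surjective $\mathbf{P}$-morphism and $L_0\hookrightarrow L$ denotes the dual embedding.

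Since $\text{dom }\varphi=P_0$, the downset of $P$ representing $a_i$ is $\fregiu A_i=A_i\cup\{\,m_p : p\ \text{maximal in}\ C\,\}$; its maximal elements are precisely those of $A_i$ (a point $m_p$ sits strictly below $q_i(p)\in A_i$, while $m_p$ is above an element of $P_0$ only if that element is $\le p$, so it cannot lie above a maximal element $q$ of $A_i$, which would force $q\le p$ with $p\in C$), hence the join-irreducible components of $a_i$ in $L$ are still the $\fregiu q$ with $q$ maximal in $A_i$. Likewise the downset representing $d$ is just $D$ (no $m_p$ lies below an element of $D$, because $q_1(p),q_2(p)\notin D$ and $D$ is a downset). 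Now take $b$ to be the element of $L$ dual to $B:=C\cup\{\,m_p : p\ \text{maximal in}\ C\,\}$; the maximal elements of $B$, hence the join-irreducible components of $b$ in $L$, are exactly the $\fregiu m_p$. Then: $C\subseteq B$ yields $c\le b$, and $\fregiu m_p\nleq c$ since $m_p\notin C$, so $c\ll b$; $B\subseteq\fregiu A_i$ (as $C\subseteq A_i$ and $m_p<q_i(p)\in A_i$) yields $b\le a_i$, and for a maximal $q$ of $A_i$ we have $\fregiu q\nleq b$ since $q\notin B$, so $b\ll a_i$; and $\fregiu m_p\nleq d$ since $m_p$ is not in the downset representing $d$, so $b-d=b$. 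Finally $b\neq 0$ as $B\neq\emptyset$, and the degenerate case $c=0$ is checked the same way. By Lemma~\ref{lemdjcobrouw} the theorem follows.

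The point requiring care --- and the reason Density~2 is harder than Density~1 --- is that $b$ is in general \emph{not} join-irreducible: it is assembled from the ``shards'' $\fregiu m_p$, one fresh join-irreducible slipped in just above each maximal element of $C$ and underneath suitable maximal elements of $A_1$ and of $A_2$. Consequently each of the four requirements has to be read off from a bookkeeping of the join-irreducible components of $b$, $a_1$, $a_2$ and $d$ in the extension, and one must also check that the $m_p$'s may legitimately be inserted at those positions (i.e.\ that $\varphi$ is genuinely a $\mathbf{P}$-morphism); both verifications hinge on the fact that the maximal elements of $A_1$ and of $A_2$ avoid $C$ as well as $D$.
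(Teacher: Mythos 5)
Your proof is correct and follows essentially the same route as the paper's: both reduce via Lemma~\ref{lemdjcobrouw} to a dual construction on the poset of join-irreducibles, splitting into the cases $C=\emptyset$ and $C\neq\emptyset$, and in the latter adjoining one fresh point above each maximal element of $C$ and below chosen maximal elements of $A_1$ and $A_2$, taking $b$ to be the downset generated by these new points and reading off the four conditions from the maximal elements of the relevant downsets. The only differences are cosmetic (notation and the explicit remark that no new comparabilities arise among old points, which the paper leaves implicit).
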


\begin{proof}
It is sufficient to show, by Lemma \ref{lemdjcobrouw}, that for any finite CBS $L_0$ and $c,a_1,a_2,d$ such that $a_1,a_2 \neq 0$, $c \ll a_1$, $c \ll a_2$ and $a_1-d=a_1$, $a_2-d=a_2$ there exists a finite extension $L_0 \subseteq L$ with $b \in L$ different from $0$ such that $c \ll b$, $b \ll a_1$, $b \ll a_2$ and $b-d=b$.\\
Let $P_0$ the poset dual to $L_0$ and $C,A_1,A_2,D$ its downsets corresponding to $c,a_1,a_2,d$.\\
\framebox{If $C= \emptyset$} choose two maximal elements $\alpha^1, \alpha^2$ respectively of $A_1$ and $A_2$ and obtain a poset $P$ by adding a new element $\beta$ to $P_0$ and setting for any $x \in P$:
\begin{itemize}
\item $\beta \leq x$ iff $x=\beta$ or $\alpha^1 \leq x$ or $\alpha^2 \leq x$.\\
	If $\alpha^1, \alpha^2$ are incomparable they become the only two successors of $\beta$ in $P$, otherwise if e.g. $\alpha^1 \leq \alpha^2$ then $\alpha^1$ is the only successor of $\beta$.
\item $x \leq \beta$ iff $x=\beta$, i.e. $\beta$ is minimal in $P$.
\end{itemize}
Define a surjective $\mathbf{P}$-morphism $\varphi:P \to P_0$ taking $\text{dom } \varphi=P_0$ and $\varphi$ acting as the identity on its domain.
Take $B= \fregiu \beta$, we have:
\begin{itemize}
\item $\fregiu \varphi^{-1}(C)=\emptyset \ll B$,
\item $B \ll \fregiu \varphi^{-1}(A_1)=A_1 \cup \lbrace \beta \rbrace$,
\item $B \ll \fregiu \varphi^{-1}(A_2)=A_2 \cup \lbrace \beta \rbrace$,
\item $B- \fregiu \varphi^{-1}(D)=B$.\\
	Indeed, since $a_1-d=a_1$ and $a_2-d=a_2$, $D$ does not contain any maximal element of $A_1$ or $A_2$, in particular it does not contain $\alpha^1$ or $\alpha^2$
\end{itemize}
Take $L$ the CBS dual to $P$ and $b \in L$ corresponding to $B$.\\
\framebox{If $C \neq \emptyset$} let $\gamma_1, \ldots, \gamma_n$ be the maximal elements of $C$.\\
Choose for any $i=1, \ldots,n$ two maximal elements $\alpha_i^1, \alpha_i^2$ respectively of $A_1$ and $A_2$ such that $\gamma_i \leq \alpha_i^1$ and $\gamma_i \leq \alpha_i^2$. Notice that they exist and $\gamma_i \neq \alpha_i^1$, $\gamma_i \neq \alpha_i^2$  because $C \ll A_1$ and $C \ll A_2$.\\
Obtain a poset $P$ by adding new elements $\beta_1, \ldots, \beta_n$ to $P_0$ and setting for any $x \in P$:
\begin{itemize}
\item $\beta_i \leq x$ iff $x=\beta_i$ or $\alpha_i^1 \leq x$ or $\alpha_i^2 \leq x$.\\
	If $\alpha_i^1, \alpha_i^2$ are incomparable they become the only two successors of $\beta_i$ in $P$, otherwise if e.g. $\alpha_i^1 \leq \alpha_i^2$ then $\alpha_i^1$ is the only successor of $\beta_i$.
\item $x \leq \beta_i$ iff $x=\beta$ or $x \leq \gamma_i$,\\
	i.e. $\gamma_i$ is the unique predecessor of $\beta_i$ in $P$.
\end{itemize}
Define a surjective $\mathbf{P}$-morphism $\varphi:P \to P_0$ taking $\text{dom } \varphi=P_0$ and $\varphi$ acting as the identity on its domain.\\
Take $B= \fregiu \beta_1 \cup \cdots \cup \fregiu \beta_n $, we have:
\begin{itemize}
\item $\fregiu \varphi^{-1}(C) \ll B$,
\item $B \ll \fregiu \varphi^{-1}(A_1)=A_1 \cup \lbrace \beta_1, \ldots, \beta_n \rbrace$,
\item $B \ll \fregiu \varphi^{-1}(A_2)=A_2 \cup \lbrace \beta_1, \ldots, \beta_n \rbrace$,
\item $B- \fregiu \varphi^{-1}(D)=B$.\\
	Indeed $D$ does not contain any maximal element of $A_1$ or $A_2$, in particular it does not contain $\alpha_i^1$ or $\alpha_i^2$ for any $i=1, \ldots,n$.
\end{itemize}
Take $L$ the CBS dual to $P$ and $b \in L$ corresponding to $B$.
\end{proof}

\begin{theorem}
Let $L$ be a CBS satisfying the Splitting, Density 1 and Density 2 Axioms.\\
Then for any finite sub-CBS $L_0 \subseteq L$ and for any signature $(h,G)$ of the first kind in $L_0$ there exists a primitive element $x \in L$ of the first kind over $L_0$ inducing such signature.
\end{theorem}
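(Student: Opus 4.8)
The plan is to reformulate the statement dually and prove it by induction on a suitable well-founded measure attached to the signature, following the pattern of Theorem~\ref{theosecondextaxiom}.

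By Theorems~\ref{theoext1} and~\ref{theoinduce}, producing $x$ is the same as embedding over $L_0$ into $L$ the unique minimal first-kind extension $L_0\subseteq L_0\langle x\rangle$ attached to $(h,G)$, which by Lemma~\ref{lemmasign} corresponds in the poset $P_0$ dual to $L_0$ to the pair $(D,U)=(\fregiu h,\fresu G)$ (a downset and an upset with $D\cap U=\emptyset$ and $d\le u$ for all $d\in D$, $u\in U$). So I must exhibit $x\in L\setminus L_0$ with $h\ll x$, with $x\ll g$ for every $g\in G$, and such that for each join-irreducible $a$ of $L_0$ one has $x-a\in\{0,x\}$, $a<x\iff a\le h$, and $x<a\iff g\le a$ for some $g\in G$ (the remaining primitivity clause $a-x\in L_0$ being automatic once $a$ is join-irreducible in the relevant finite extension, which holds because first-kind minimal extensions preserve join-irreducibility).

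For the base case — $G=\emptyset$ with $h$ the top $1_{L_0}$ of $L_0$, or every $g\in G$ a cover of $h$ — I would produce $x$ directly from an axiom. When $G=\emptyset$ and $h=1_{L_0}$, Density~1 applied to $1_{L_0}$ gives $x$ with $1_{L_0}\ll x$; then $x\ne0$ forces $x\notin L_0$, and $x-1_{L_0}=x$ together with $a\le1_{L_0}$ for every join-irreducible $a$ forces $x-a=x$ and $x\nleq a$, exactly the signature $(1_{L_0},\emptyset)$. When the $g\in G$ cover $h$, Density~2 with $c=h$, $a_1=a_2=\bigvee G$ and a suitable $d$ encoding incomparabilities gives the witness, and Lemma~\ref{lemmaprim1} together with Theorem~\ref{theoext1} certifies it generates a minimal first-kind extension with the right signature (the Splitting Axiom being called on in the auxiliary steps when the $g$'s are not mutually incomparable). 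For the inductive step I would pick $g_1\in G$ whose distance to $h$ is witnessed by a join-irreducible strictly between the two, use the appropriate Density/Splitting axiom to obtain an element $z$ realizing over $L_0$ a strictly simpler first-kind signature — so $L_0\subseteq L_0\langle z\rangle$ is a finite minimal first-kind extension by Theorem~\ref{theoext1} — and then apply the inductive hypothesis inside the finite CBS $L_0\langle z\rangle$, where $(h,G)$ induces a first-kind signature of strictly smaller measure, to get $x\in L$ primitive of the first kind over $L_0\langle z\rangle$. The key point is that the same $x$ is then already primitive of the first kind over $L_0$ and induces $(h,G)$: since every join-irreducible of $L_0$ remains join-irreducible in $L_0\langle z\rangle$, the clauses $a<x\iff a\le h$ and $x-a\in\{0,x\}$ transfer verbatim, and $x<a\iff(z\le a$ or $g\le a$ for some $g\in G\setminus\{g_1\})$ collapses to $x<a\iff g\le a$ for some $g\in G$ using that $z$ realizes the $(h,\{g_1\})$-relation. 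So, in contrast with the second-kind case, no recombination of joins is needed — the innermost witness \emph{is} the answer.

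The main obstacle will be twofold. The axioms only supply a witness satisfying a handful of $\ll$- and $-$-identities, and such a witness need not itself be primitive (its generated extension may be a longer chain of minimal extensions, and — since $L$ has no meets of incomparable pairs — a meet computed in a finite subalgebra need not survive in $L$); as in Lemmas~\ref{lemmaprim1}, \ref{lemmaprim2}, \ref{lemmaauxjoinirr}, \ref{lemmaauxdual} and \ref{lemmaauxsplit}, certifying that each intermediate witness generates exactly a minimal first-kind extension, and that the final $x$ has exactly the prescribed comparabilities with the join-irreducibles of $L_0$, requires careful join-irreducible-component bookkeeping in the tower $L_0\subseteq L_0\langle z\rangle\subseteq\cdots$. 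Even more delicate is the design of the well-founded measure so that it strictly drops at every reduction across the whole case analysis ($G$ empty or not, $h=1_{L_0}$ or not, the $g$'s mutually incomparable or not, and the treatment of a general $(h,\emptyset)$, which does not reduce to Density~1 on $h$ alone): I expect a lexicographic measure combining the ``distances'' from the elements of $G$ down to $h$ with a term measuring how much of $L_0$ lies above $h$.
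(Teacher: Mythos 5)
Your high-level reading of the problem (dual reformulation, transfer of primitivity along first-kind extensions, induction driven by the Density and Splitting axioms) is sound, but the proposal leaves unresolved exactly the points where the paper's proof does its real work, so as it stands there is a genuine gap rather than a complete alternative argument. First, the base cases. The case $G=\emptyset$ with $h$ \emph{not} the top of $L_0$ is not handled: you list it yourself among the obstacles, but it is not a marginal difficulty — the paper's entire base case consists of the trick that resolves it (apply Density 1 to get $m$ with $1_{L_0}\ll m$, observe that $L_0\cup\{m\}$ is a subalgebra in which $(h,1_{L_0},m)$ is a signature of the \emph{second} kind, and invoke Theorem~\ref{theosecondextaxiom}; one half of the resulting primitive couple is the desired $x$). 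Your other proposed base case, ``all $g\in G$ cover $h$'' via Density 2 with $c=h$ and $a_1=a_2=\bigvee G$, does not work when $\#G\geq 2$: the axiom only yields $b\ll\bigvee G$, which gives $b\leq\bigvee G$ but not $b<g$ for each individual $g$ (and the required clause is $x<a$ iff $g\leq a$ for \emph{some} $g\in G$, which forces $x<g$ for every $g\in G$); moreover Density 2 accommodates only two upper constraints, which is precisely why the paper peels off one element of $G$ at a time and inducts on $\#G$.

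Second, the inductive step. The element $z$ ``realizing over $L_0$ a strictly simpler first-kind signature'' is itself an instance of the theorem being proved — a single application of an axiom does not produce a primitive element inducing a prescribed signature — so you need the measure to drop for \emph{that} instance too; but for the collapse $x<a\iff(z\le a$ or $g\le a$ for $g\in G\setminus\{g_1\})\iff(g\le a$ for some $g\in G)$ you need $z$ to satisfy exactly the $(\cdot,\{g_1\})$-relation, and when $G=\{g_1\}$ this is the original signature: the recursion is circular unless the measure is specified and shown to decrease, which you explicitly postpone (``I expect a lexicographic measure\ldots''). Your transfer argument (that a primitive $x$ of the first kind over $L_0\langle z\rangle$ inducing $(h,(G\setminus\{g_1\})\cup\{z\})$ is automatically primitive over $L_0$ inducing $(h,G)$) is essentially fine, since first-kind minimal extensions preserve all join-irreducibles and then $a-x\in\{0,a\}$ for every join-irreducible $a$ of $L_0$; so the obstruction is not the bookkeeping but the well-foundedness of the scheme. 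For contrast, the paper's inductive step is structured differently and avoids these issues: it takes the inductive witness $y$ for $(h,\{g_1,\ldots,g_{k-1}\})$ over $L_0$, uses Theorem~\ref{theosecondextaxiom} on the second-kind signature $(h,g_k^-,g_k)$ in $L_0\langle y\rangle$ to split $g_k$ into $g_k',g_k''$, and then makes one final application of Density 2 to $h,y,g_k'$ together with the carefully chosen side element $d=\bigvee\{b \text{ join-irreducible of } L_0 : b\ngeq g_i \text{ for all } i\}$, whose role is to force $x-a=x$ for the join-irreducibles $a$ lying above no $g_i$; the witness is this \emph{new} element produced by Density 2, not the innermost one. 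Your sketch has no counterpart for the choice of $d$, and without it the clause $x-a\in\{0,x\}$ for join-irreducibles $a$ incomparable with everything in the picture is not secured.
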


\begin{proof}
We follow this strategy: if $G= \emptyset$ we use the Density 1 Axiom to take an element $m \in L$ greater than any element of $L_0$, then, thanks to the Splitting Axiom, using Theorem \ref{theosecondextaxiom} we `split' $m$ into two elements, one over $1_{L_0}$ the top element of $L_0$ and another over $h$. It turns out that this second element is primitive of the first kind and induces the signature $(h, \emptyset)$.
If $G$ is nonempty and $G=\lbrace g_1, \ldots, g_k \rbrace$ we suppose to have already found a primitive element $y$ inducing the signature $(h,\lbrace g_1, \ldots, g_{k-1} \rbrace)$. Then, using Theorem \ref{theosecondextaxiom} again, we `split' $g_k$ into two elements $g_k',g_k''$, the first over $h$ and the second over the predecessor of $g_k$. Finally, applying the Density 2 Axiom, we obtain an element of $L$ in between $h$, $g_k'$ and $y$. It turns out that this element is primitive of the first kind and induces the signature $(h, G)$.\\
The statement of the Theorem require that, according to the definition of primitive element inducing a given signature, we need to do the following. Given $h \in L_0$ and $G$ a set of join-irreducibles of $L_0$ such that $h < g$ for any $g \in G$, we have to find $x \in L$ such that:
\begin{enumerate}
\item $x \notin L_0$ \label{itm:firstextdens2axcond1}
\end{enumerate}
and for any $a$ join-irreducible of $L_0$:
\begin{enumerate}[resume]
\item $a-x \in L_0$, \label{itm:firstextdens2axcond2}
\item either $ x-a= x \text{ or } x-a= 0$, \label{itm:firstextdens2axcond3}
\item $a < x$ iff $a \leq h \;$ and $\; x < a$ iff $g_i \leq a$ for some $i=1, \ldots, k$. \label{itm:firstextdens2axcond4}
\end{enumerate}
The proof is by induction on $k= \# G$.\\
\framebox{Case $k=0$.}\\
Let $1_{L_0}$ be the maximum element of $L_0$, by Density 1 there exists $0 \neq m \in L$ such that $1_{L_0} \ll m$.\\
Then $L_1=L_0 \cup \lbrace m \rbrace$ is a sub-CBS of $L$. Indeed it is closed under taking joins and differences since for any $a \in L_0$ we have $m > a$ and thus $a-m=0$ and $m = m-1_{L_0} \leq m-a \leq m$, therefore $m-a=m$. Hence $m$ is a join-irreducible of $L_1$. Furthermore it is clear that the join-irreducibles of $L_1$ are the join-irreducibles of $L_0$ and $m$.\\
$(h,1_{L_0},m)$ is a signature of the second kind in $L_1$, indeed $h \vee 1_{L_0}=1_{L_0}=m^-$.\\
Thanks to the Splitting Axiom we can apply Theorem \ref{theosecondextaxiom} to the signature $(h,1_{L_0},m)$ in $L_1$ and obtain the existence of a primitive couple of the second kind $(x_1,x_2) \in L^2$ inducing such signature. 
Thus we have that there exist $x_1,x_2 \in L$ such that:
\begin{enumerate}
\item
	$x_1 \neq x_2$ and $x_1,x_2 \notin L_1$, \label{itm:density1xiproperty1}
\item
	$m-x_1=x_2$ and $m-x_2=x_1$ \label{itm:density1xiproperty2}
\end{enumerate}
and for any $c$ join-irreducible of $L_1$:
\begin{enumerate}[resume]
\item
	if $c < m$ then $c-x_i \in L_1$ for $i=1,2$, \label{itm:density1xiproperty3}
\item
	$c < x_1$ iff $c \leq h \;$ and $c < x_2$ iff $c \leq 1_{L_0}$. \label{itm:density1xiproperty4}
\end{enumerate}
Recall that Lemma \ref{defprim2} implies that for any $c \in L_1$:
\begin{enumerate}[label=(\roman*)] 
\item
	$c-x_i \in L_1$ or $c-x_i= b \vee x_j $ for some $b \in L_1$ with $\lbrace i,j \rbrace = \lbrace 1,2 \rbrace$. \label{itm:density1xipropertyi}
\item
	$x_i-c= x_i \text{ or } x_i-c= 0 \;$ for $i=1,2$. \label{itm:density1xipropertyii}
\end{enumerate}
Let $x=x_1$, it is the element we were looking for. Indeed we now show that $x$ is a primitive element of the first kind over $L_0$ inducing the signature $(h, \emptyset)$
\begin{enumerate}
\item 
	 $x \notin L_0$ since $x=x_1 \notin L_1$ by property \ref{itm:density1xiproperty1} of $x_1$.
\end{enumerate}
Let $a$ be a join-irreducible of $L_0$. Then
\begin{enumerate}[resume]
\item
	$a-x_1 \in L_0$.
	Indeed, from $a \leq 1_{L_0}$ it follows (by property \ref{itm:density1xiproperty4} of $x_2$) $a < x_2$; then by \ref{itm:density1xipropertyii} either $a-x_1 \in L_1$ or $a-x_1=b \vee x_2$ with $b \in L_0$. The latter is absurd because (for $a < x_2$) we would get $x_2 > a \geq a-x_1=b \vee x_2 \geq x_2$. Then $a-x_1 \in L_1$, i.e. $a-x_1 \in L_0$ because $m > a \geq a-x_1$.
\item
	$x_1-a=x_1$ or $x_1-a=0$ by property \ref{itm:density1xipropertyii}.
\item
	$a < x_1$ if and only if $a \leq h$ by property \ref{itm:density1xiproperty4} of $x_1$.\\
	$x_1 \nless a$, because if $x_1 < a$ then $x_1 < 1_{L_0}$ and so $0=x_1-1_{L_0}=(m-x_2)-1_{L_0}=(m-1_{L_0})-x_2=m-x_2=x_1$ which is absurd because $x_1 \notin L_1$ by property \ref{itm:density1xiproperty1} of $x_1$.
\end{enumerate}
\framebox{Case $k \geq 1$.}\\
Suppose that $G= \lbrace g_1, \ldots, g_k \rbrace$. By inductive hypothesis there exists a primitive element $y \in L$ of the first kind over $L_0$ which induces the signature $(h, \lbrace g_1, \ldots, g_{k-1} \rbrace)$. This means that for any $a$ join-irreducible of $L_0$:
\begin{enumerate}
\item $y \notin L_0$, \label{itm:density2yproperty1}
\item $a-y \in L_0$, \label{itm:density2yproperty2}
\item either $ y-a= y \text{ or } y-a= 0$, \label{itm:density2yproperty3}
\item $a < y$ iff $a \leq h \;$ and $\; y < a$ iff $g_i \leq a$ for some $i=1, \ldots, k-1$. \label{itm:density2yproperty4}
\end{enumerate}
Recall that Lemma \ref{lemmaprim1} shows that the properties \ref{itm:density2yproperty2} and \ref{itm:density2yproperty3} actually hold for any $a \in L_0$.\\
Notice that $g_k$ is still join-irreducible in the sub-CBS $L_0 \langle y \rangle \subseteq L$ generated by $L_0$ and $y$ since $L_0 \subseteq L_0 \langle y \rangle$ is a minimal finite extension of the first kind by Theorem \ref{theoext1}.\\
Since $L$ satisfies the Splitting Axiom, we can apply Theorem \ref{theosecondextaxiom} to the signature $(h,g_k^-, g_k)$ in $L_0 \langle y \rangle$. Notice that it is a signature of the second kind because $h \vee g_k^-= g_k^- \ll g_k$. Therefore, there exists a primitive couple of the second kind $(g_k',g_k'') \in L^2$ inducing such signature. Thus we have that there exist $g_k',g_k'' \in L$ such that:
\begin{enumerate}
\item $g_k',g_k'' \notin L_0 \langle y \rangle$ and $g_k' \neq g_k''$, \label{itm:density2gkproperty1}
\item $g_k-g_k'=g_k''$ and $ g_k-g_k''=g_k'$ \label{itm:density2gkproperty2}
\end{enumerate}
and for any $a$ join-irreducible of $L_0 \langle y \rangle$:
\begin{enumerate}[resume]
\item if $a<g_k$ then $a-g_k' \in L_0 \langle y \rangle$ and $a-g_k'' \in L_0 \langle y \rangle$, \label{itm:density2gkproperty3}
\item $a < g_k'$ iff $a \leq h \quad$ and $\quad a < g_k''$ iff $a \leq g_k^- $. \label{itm:density2gkproperty4}
\end{enumerate}
Observe that property \ref{itm:density2gkproperty4} actually holds for any $a \in L_0 \langle y \rangle$ since any element in a finite CBS is the join of join-irreducible elements.\\
Apply the Density 2 Axiom on $h,y,g_k',d$ where 
\[
d= \bigvee \lbrace b \text{ join-irreducible of $L_0$ s.t. } b \ngeq g_1, \ldots, b \ngeq g_k \rbrace.
\]
We can apply it because:\\
$h \ll y$ since by property \ref{itm:density2yproperty3} of $y$ we have $y-h=y$ because $h \in L_0$ and $h < y$.\\
$h \ll g_k'$ since $h < g_k'$ and $g_k'-h=(g_k-g_k'')-h=(g_k-h)-g_k''=g_k-g_k''=g_k'$. Notice that $g_k-h=g_k$ because $g_k$ is join-irreducible in $L_0$. \\
$y-d=y$ since for any $b$ join-irreducible in $L_0$ such that $b \ngeq g_1, \ldots, b \ngeq g_k$ we have $y-b=y$: otherwise, since $y$ is join-irreducible in $L_0 \langle y \rangle$, it would be $y-b=0$ so $b > y$ and then by property \ref{itm:density2yproperty4} of $y$ we would have $b \geq g_i$ for some $i<k$ which is absurd.\\
$g_k'-d=g_k'$ since 
\[
g_k = g_k-\bigvee \lbrace b \text{ join-irreducible of $L_0$ s.t. } b \ngeq g_k \rbrace \leq g_k-d \leq g_k
\]
and $g_k'-d=(g_k-g_k'')-d=(g_k-d)-g_k''=g_k-g_k''=g_k'$.\\
Then by the Density 2 Axiom there exists $0 \neq x \in L$ such that 
\begin{equation} \label{eq:density2casek>1}
h \ll x, \: x \ll y, \: x \ll g_k' \text{ and } x-d=x
\end{equation}
$x$ is the element we were looking for. Indeed, it is primitive of the first kind over $L_0$ and induces the signature $(h,G)$:
\begin{enumerate}
\item
	We have $x \notin L_0$ because if $x \in L_0$ then since $x < y$ it would be $x \leq h$ by property \ref{itm:density2yproperty4} of $y$ but this is absurd because $x \neq 0$ and $h \ll x$.
\end{enumerate}
Let $a$ be a join-irreducible of $L_0$:
\begin{enumerate}[resume]
\item 
	If $a \leq h$ then $a-x=0$ since $h \leq x$ by \eqref{eq:density2casek>1}.\\
	If $a \nleq h$ then by property \ref{itm:density2gkproperty4} of $g_k'$ we have $a \nless g_k'$.
	\begin{itemize}
	\item
		If $a \nleq h$ and $a \neq g_k$ then $a$ is still join-irreducible in $L_0 \langle y,g_k',g_k'' \rangle$ (since $L_0 \langle y \rangle \subseteq L_0 \langle y,g_k',g_k'' \rangle$ is a minimal finite extension by Theorem \ref{theoext2}), thus $a-g_k'=a$. Therefore $a-x=a$ because $a=a-g_k' \leq a-x \leq a$ since $x \leq g_k'$.
	\item
		If $a=g_k$ then by $x \ll g_k'$ (see \eqref{eq:density2casek>1})
	\begin{equation*}
	\begin{split}
     g_k-x=(g_k' \vee g_k'')-x=(g_k'-x) \vee (g_k''-x )=g_k' \vee ((g_k-g_k')-x)\\
     =g_k' \vee (g_k-(g_k' \vee x)) =g_k' \vee (g_k-g_k')=g_k' \vee g_k''=g_k.
	\end{split}
	\end{equation*}
	\end{itemize}
\item 
	If $a \geq g_i$ for some $i=1, \ldots,k$ then:
	\begin{itemize}
	\item If $i \neq k$ then $a \geq y \geq x$ and $x-a=0$ by property \ref{itm:density2yproperty4} of $y$ and \eqref{eq:density2casek>1}.
	\item If $i=k$ then $a \geq g_k \geq g_k' \geq x$ and $x-a=0$.
	\end{itemize}
	If $a \ngeq g_i$ for any $i=1,\ldots,k$ then $a \leq d$ and $x-a=x$ since $x=x-d \leq x-a \leq x$
\item If $a < x$ then $a < g_k'$ and thus $a \leq h$ by property \ref{itm:density2gkproperty4} of $g_k'$.\\
	If $a \leq h$ then $a < x$ because $h <x$ by \eqref{eq:density2casek>1}.\\
	If $x <a$ and $a \ngeq g_1, \ldots, a \ngeq g_k$, then $a \leq d$ and $x=x-d \leq x-a =0$ which is absurd, thus $g_i \leq a$ for some $i=1, \ldots,k$.\\
	If $g_i \leq a$ for some $i=1, \ldots,k$ then:
	\begin{itemize}
	\item If $i \neq k$ then, since $y < g_i$ by property \ref{itm:density2yproperty4} of $y$, we have $x <y < g_i \leq a$ and thus $x < a$.
	\item If $i=k$ then $x < g_k' <g_k \leq a$ and thus $x<a$.
	\end{itemize}
\end{enumerate}
\end{proof}

\section{Properties of existentially closed CBSes} \label{section:properties}

From our investigation we can easily obtain some properties of the existentially closed CBSes:

\begin{proposition}
If $L$ is an existentially closed CBS, then $L$ does not have a maximum element.
\end{proposition}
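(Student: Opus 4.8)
The plan is to reach a contradiction from the assumption that $L$ has a maximum element, using the Density 1 Axiom, which $L$ satisfies precisely because it is existentially closed (by the theorem above establishing that every existentially closed CBS satisfies Density 1).

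First I would suppose, for contradiction, that $L$ has a maximum element $m$, so that $a \leq m$ for every $a \in L$. Since $L$ is existentially closed it satisfies the Density 1 Axiom, which I would then apply to the element $c = m$: this yields some $b \in L$ with $b \neq 0$ and $m \ll b$, i.e. $m \leq b$ and $b - m = b$.

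Next I would exploit maximality. Since $m$ is the greatest element of $L$ we also have $b \leq m$, and together with $m \leq b$ this forces $b = m$. Substituting into $b - m = b$ gives $m - m = m$; but $m \leq m$ implies $m - m = 0$ in any CBS (recall that $a \leq b$ iff $a - b = 0$), so $m = 0$ and hence $b = 0$, contradicting $b \neq 0$. This contradiction shows that $L$ cannot have a maximum element.

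I do not expect any real obstacle here beyond bookkeeping: the statement is an immediate consequence of Density 1, once one observes that a maximum element $m$ admits no witness $b$ strictly above it (the condition $m \ll b$ with $m$ maximal collapses $b$ to $m$ and then, via $b - m = b$, to $0$). An alternative and equally short route would avoid the axioms altogether and use Theorem~\ref{theoexclosedcharac} directly: take $L_0$ to be the finite sub-CBS generated by $m$ and let $C \supseteq L_0$ be the finite extension obtained by adjoining a fresh top element above $m$; the embedding $C \to L$ fixing $L_0$ pointwise would then produce an element of $L$ strictly above $m$, again a contradiction.
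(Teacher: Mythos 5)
Your proof is correct and follows essentially the same route as the paper: both apply Density 1 to the (would-be) maximum element and observe that $c \ll b$ with $b \neq 0$ forces $b$ strictly above $c$, which is impossible at a maximum. The paper just phrases the contradiction slightly more directly (every $c$ has some $b$ with $c < b$), while you unfold it via $b = m$ and $m - m = 0$; the content is identical.
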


\begin{proof}
Since $L$ satisfy the Density 1 Axiom for any $c \in L$ there exists an element $b \neq 0$ such that $c \ll b$ and therefore $c <b$. This implies that there cannot be a maximum element of $L$.
\end{proof}

\begin{proposition}
Let $L$ be an existentially closed CBS and $a,b \in L$.\\
If $a$ and $b$ are incomparable, i.e. $a \nleq b $ and $b \nleq a$, then there does not exist the meet of $a$ and $b$ in $L$.\\
Notice that if $a \leq b$ then the meet exists and it is $a$.
\end{proposition}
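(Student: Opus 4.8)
The plan is to assume, for contradiction, that $a$ and $b$ are incomparable yet the meet $m = a \wedge b$ exists in $L$, and to produce inside some finite extension (hence, by existential closedness, inside $L$ itself) a common lower bound of $a$ and $b$ which is not below $m$. First I would pass to a finite setting: let $L_0 \subseteq L$ be the sub-CBS generated by $\lbrace a,b,m \rbrace$, which is finite by local finiteness. Every lower bound of $\lbrace a,b \rbrace$ lying in $L_0$ is a lower bound of $\lbrace a,b \rbrace$ in $L$, hence $\leq m$; since $m \in L_0$ is itself such a lower bound, $m$ is the greatest lower bound of $\lbrace a,b \rbrace$ in $L_0$, and because $L_0$ is a finite CBS (hence a co-Heyting algebra) this greatest lower bound is the meet $a \wedge_{L_0} b$. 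So from now on $m = a \wedge_{L_0} b \in L_0$.

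Next I would dualize: let $Q$ be the finite poset dual to $L_0$, and let $A, B, M \in \mathcal{D}(Q)$ correspond to $a, b, m$, so that $M = A \cap B$. Incomparability of $a,b$ gives $A \setminus B \neq \emptyset$ and $B \setminus A \neq \emptyset$. Pick $p_0 \in A \setminus B$ and let $p$ be a maximal element of $A$ with $p \geq p_0$; since $B$ is a downset and $p_0 \notin B$, we get $p \notin B$, and maximality of $p$ in $A$ gives $\fresu p \cap A = \lbrace p \rbrace$, whence $\fresu p \cap A \cap B = \emptyset$. Choose $q$ symmetrically from $B \setminus A$. Put $g_1 = \fregiu p$ and $g_2 = \fregiu q$; these are join-irreducible elements of $L_0$, and they are incomparable, because $p \leq q$ would force $p \in B$ (as $B$ is a downset) and $q \leq p$ would force $q \in A$. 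Consequently $(0_{L_0}, \lbrace g_1, g_2 \rbrace)$ is a signature of the first kind in $L_0$ (note $0_{L_0} < g_i$ since $g_i \neq 0$).

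At this point I would invoke Theorem \ref{theoaxiominterm}: as $L$ is existentially closed, the signature $(0_{L_0}, \lbrace g_1, g_2 \rbrace)$ is induced by some primitive element $x \in L$ of the first kind over $L_0$. By the definition of inducing, for every join-irreducible $a'$ of $L_0$ we have $x < a'$ iff $g_1 \leq a'$ or $g_2 \leq a'$; taking $a' = g_1$ and $a' = g_2$ yields $x < g_1 \leq a$ and $x < g_2 \leq b$, so $x$ is a common lower bound of $a$ and $b$ in $L$. To finish I must show $x \nleq m$. If $m = 0$ this is immediate since $x \neq 0$ (because $x \notin L_0 \ni 0$). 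Otherwise write $m = m_1 \vee \ldots \vee m_r$ with $m_1, \ldots, m_r$ the join-irreducible components of $m$ in $L_0$, each of the form $m_j = \fregiu p_j$ with $p_j \in M = A \cap B$. Since $x$ is join-irreducible in $L_0 \langle x \rangle$ by Theorem \ref{theoext1}, $x \leq m$ would give $x \leq m_j$ for some $j$, hence $x < m_j$ (strictly, as $x \notin L_0$), hence $g_i \leq m_j$ for some $i$; but $g_1 \leq m_j$ means $p \leq p_j \in A \cap B$, so $p \in B$, contradicting $p \notin B$ (and symmetrically for $g_2$). Therefore $x \nleq m$, which contradicts $m$ being the meet of $a$ and $b$ in $L$.

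The routine parts are the duality translation and the verification that the chosen pair is a legitimate signature; the step I expect to be most delicate is the very first one — ensuring that $m$ may be taken inside $L_0$ and coincides there with $a \wedge_{L_0} b$ — together with the choice of $p$ as a maximal element of $A$ (rather than merely of $A \setminus B$), since it is precisely $\fresu p \cap A \cap B = \emptyset$ that prevents the new element $x$ from lying below $m$.
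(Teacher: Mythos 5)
Your proof is correct and follows essentially the same route as the paper: generate the finite sub-CBS by $a,b,m$, extract join-irreducible $g_1\leq a$ with $g_1\nleq b$ and $g_2\leq b$ with $g_2\nleq a$, apply Theorem \ref{theoaxiominterm} to the signature $(0,\lbrace g_1,g_2\rbrace)$, and use the resulting primitive element $x$ as a common lower bound of $a,b$ not below $m$. Your dual-poset detour and the explicit justification (via join-irreducibility of $x$ in $L_0\langle x\rangle$) of the step $x\nleq m$ are just more detailed versions of what the paper states directly.
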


\begin{proof}
Denote by $c$ the meet of $a$ and $b$. Consider $L_0 \subseteq L$ the sub-CBS generated by $a,b,c$. It is finite by local finiteness. $c$ is the meet of $a$ and $b$ also in $L_0$.\\
Since $a,b$ are incomparable there exist $g_1,g_2$ join-irreducible components in $L_0$ respectively of $a$ and $b$ such that $g_1 \nleq b$ and $g_2 \nleq a$.\\
By Theorem \ref{theoaxiominterm} taking $h=0 \in L_0$ we have that there exists $x \in L \setminus L_0$ such that for any $d \in L_0$:
\begin{itemize}
\item $d < x$ iff $d=0$,
\item $ x < d$ iff $g_i \leq d$ for $i=1$ or $i=2$.
\end{itemize}
We have that $x \nleq c$ since  $x \notin L_0$, $g_1 \nleq c$ and $g_2 \nleq c$, therefore $c <  c \vee x$. Notice that $x < g_1 \leq a$ and $x < g_2 \leq b$, thus $c \vee x \leq a$ and $c \vee x \leq b$. This implies that $c$ cannot be the meet of $a$ and $b$ in $L$.
\end{proof}

\begin{proposition}
If $L$ is an existentially closed CBS, then there are no join-irreducible elements of $L$.
\end{proposition}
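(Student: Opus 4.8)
The plan is to argue by contradiction: assume $L$ has a join-irreducible element $g$ and derive a contradiction from the Splitting Axiom, which holds in $L$ because $L$ is existentially closed. The crucial observation is that the premise of the Splitting Axiom is met already by a completely degenerate triple. Indeed $g\neq 0$ since $g$ is join-irreducible, and $0\vee 0=0\ll g$ because $0\leq g$ and $g-0=g$ (see Remark~\ref{listsimplefacts}). So the Splitting Axiom applies to $a=g$ and $b_1=b_2=0$.

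Carrying this out, the Splitting Axiom yields elements $a_1,a_2\neq 0$ with $g-a_1=a_2$ and $g-a_2=a_1$, the two remaining conclusions of the axiom degenerating to the trivial identity $0-a_i=0$. I would then note that $a_1=g-a_2\leq g$ and $a_2=g-a_1\leq g$, so $a_1\vee a_2\leq g$, while
\[
g-(a_1\vee a_2)=(g-a_1)-a_2=a_2-a_2=0
\]
gives $g\leq a_1\vee a_2$; hence $g=a_1\vee a_2$. Since $g$ is join-irreducible, one of $a_1,a_2$ must equal $g$, say $a_1=g$; but then $a_2=g-a_1=g-g=0$, contradicting $a_2\neq 0$. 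The other case is symmetric, so no join-irreducible $g$ can exist in $L$.

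I do not expect a real obstacle here: the entire content is the (easy to miss) remark that the Splitting Axiom, specialized to $b_1=b_2=0$, already asserts that every nonzero element is a join of two elements strictly below it. If one preferred to stay closer to the machinery already developed, an equally valid but slightly longer route would go through Theorem~\ref{theoaxiominterm}: inside the sub-CBS $L_0=\{0,g\}$ the triple $(0,0,g)$ is a signature of the second kind, since $g^-=0$ there, hence it is induced by a primitive couple $(x_1,x_2)$ with $x_1,x_2\notin L_0$ and $g=x_1\vee x_2$ (one may also invoke Lemma~\ref{lemmaauxsplit}); then $x_1,x_2<g$ and $g$ fails to be join-irreducible by item~\ref{itm:propjoinirr4} of the Remark characterizing join-irreducibles. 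The degenerate Splitting argument above is the shorter and more self-contained of the two.
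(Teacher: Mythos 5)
Your proof is correct and is essentially the paper's own argument: the paper likewise applies the Splitting Axiom to the triple $g,0,0$, computes $g-(a_1\vee a_2)=(g-a_1)-a_2=0$ to get $g=a_1\vee a_2$ with $a_1,a_2\neq 0$ (hence $\neq g$), and concludes $g$ is not join-irreducible. The only difference is presentational (contradiction versus a direct statement about every nonzero element), so nothing further is needed.
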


\begin{proof}
Let $g$ be a nonzero element of $L$. We can apply the splitting axiom on the triple $g,0,0$, then there exist $g_1,g_2 \in L$ such that 
\[
g-g_1=g_2, \quad g-g_2=g_1 \quad \text{and} \quad g_1,g_2 \neq 0.
\]
Since $g_1,g_2 \leq g$ and $g-(g_1 \vee g_2)=(g-g_1)-g_2=0$ we have that $g=g_1 
\vee g_2$. Moreover $g_1,g_2 \neq g$ because $g_1,g_2 \neq 0$. Therefore $g$ cannot be join-irreducible because $g=g_1 \vee g_2$ with $g_1,g_2 \neq g$, recall that $0$ is never join-irreducible.
\end{proof}

\bibliographystyle{alpha}
\bibliography{bibliography}

\end{document}